\newtheorem{theorem}{Theorem}[section]
\newtheorem{proposition}[theorem]{Proposition}
\newtheorem{lemma}[theorem]{Lemma}
\newtheorem{corollary}[theorem]{Corollary}
\newtheorem{remark}[theorem]{Remark}
\renewcommand{\theequation}{\thesection.\arabic{equation}}
\numberwithin{equation}{section}
\newcounter{counterConstant}
\newenvironment{definition}[1][Definition]{\bigskip\noindent\textbf{#1. }\it}{\rm\bigskip}
\newenvironment{definitions}[1][Definition]{\bigskip\noindent\textbf{#1. }\it}{\rm}
\let\pdfoutput=\undefined\fi
\chardef\@x10\chardef\@xv60
\def\tcitime{
\def\@time{%
  \@minute\time\@hour\@minute\divide\@hour\@xv
  \ifnum\@hour<\@x 0\fi\the\@hour:%
  \multiply\@hour\@xv\advance\@minute-\@hour
  \ifnum\@minute<\@x 0\fi\the\@minute
  }}%
\def\x@hyperref#1#2#3{%
   % Turn off various catcodes before reading parameter 4
   \catcode`\~ = 12
   \catcode`\$ = 12
   \catcode`\_ = 12
   \catcode`\# = 12
   \catcode`\& = 12
   \y@hyperref{#1}{#2}{#3}%
}
\def\y@hyperref#1#2#3#4{%
   #2\ref{#4}#3
   \catcode`\~ = 13
   \catcode`\$ = 3
   \catcode`\_ = 8
   \catcode`\# = 6
   \catcode`\& = 4
}
\def\QCTOpt[#1]#2{%
  \def\QCTOptB{#1}
  \def\QCTOptA{#2}
}
\def\QCTNOpt#1{%
  \def\QCTOptA{#1}
  \let\QCTOptB\empty
}
\def\Qct{%
  \@ifnextchar[{%
    \QCTOpt}{\QCTNOpt}
}
\def\QCBOpt[#1]#2{%
  \def\QCBOptB{#1}%
  \def\QCBOptA{#2}%
}
\def\QCBNOpt#1{%
  \def\QCBOptA{#1}%
  \let\QCBOptB\empty
}
\def\Qcb{%
  \@ifnextchar[{%
    \QCBOpt}{\QCBNOpt}%
}
\def\PrepCapArgs{%
  \ifx\QCBOptA\empty
    \ifx\QCTOptA\empty
      {}%
    \else
      \ifx\QCTOptB\empty
        {\QCTOptA}%
      \else
        [\QCTOptB]{\QCTOptA}%
      \fi
    \fi
  \else
    \ifx\QCBOptA\empty
      {}%
    \else
      \ifx\QCBOptB\empty
        {\QCBOptA}%
      \else
        [\QCBOptB]{\QCBOptA}%
      \fi
    \fi
  \fi
}
\def\GRAPHICSPS#1{%
 \ifcase\GRAPHICSTYPE%\GRAPHICSTYPE=0
   \special{ps: #1}%
 \or%\GRAPHICSTYPE=1
   \special{language "PS", include "#1"}%
%%%\or%\GRAPHICSTYPE=2
%%%  #1%
 \fi
}%
\def\graffile#1#2#3#4{%
%%% \ifnum\GRAPHICSTYPE=\tw@
%%%  %Following if using psfig
%%%  \@ifundefined{psfig}{\input psfig.tex}{}%
%%%  \psfig{file=#1, height=#3, width=#2}%
%%% \else
  %Following for all others
  % JCS - added BOXTHEFRAME, see below
    \bgroup
	   \@inlabelfalse
       \leavevmode
       \@ifundefined{bbl@deactivate}{\def~{\string~}}{\activesoff}%
        \raise -#4 \BOXTHEFRAME{%
           \hbox to #2{\raise #3\hbox to #2{\null #1\hfil}}}%
    \egroup
}%
\def\draftbox#1#2#3#4{%
 \leavevmode\raise -#4 \hbox{%
  \frame{\rlap{\protect\tiny #1}\hbox to #2%
   {\vrule height#3 width\z@ depth\z@\hfil}%
  }%
 }%
}%
\let\nographics=\@msidraft
\newif\ifwasdraft
\def\GRAPHIC#1#2#3#4#5{%
   \ifnum\@msidraft=\@ne\draftbox{#2}{#3}{#4}{#5}%
   \else\graffile{#1}{#3}{#4}{#5}%
   \fi
}
\def\addtoLaTeXparams#1{%
    \edef\LaTeXparams{\LaTeXparams #1}}%
\newif\ifBoxFrame \BoxFramefalse
\newif\ifOverFrame \OverFramefalse
\newif\ifUnderFrame \UnderFramefalse
\def\BOXTHEFRAME#1{%
   \hbox{%
      \ifBoxFrame
         \frame{#1}%
      \else
         {#1}%
      \fi
   }%
}
\def\doFRAMEparams#1{\BoxFramefalse\OverFramefalse\UnderFramefalse\readFRAMEparams#1\end}%
\def\readFRAMEparams#1{%
 \ifx#1\end%
  \let\next=\relax
  \else
  \ifx#1i\dispkind=\z@\fi
  \ifx#1d\dispkind=\@ne\fi
  \ifx#1f\dispkind=\tw@\fi
  \ifx#1t\addtoLaTeXparams{t}\fi
  \ifx#1b\addtoLaTeXparams{b}\fi
  \ifx#1p\addtoLaTeXparams{p}\fi
  \ifx#1h\addtoLaTeXparams{h}\fi
  \ifx#1X\BoxFrametrue\fi
  \ifx#1O\OverFrametrue\fi
  \ifx#1U\UnderFrametrue\fi
  \ifx#1w
    \ifnum\@msidraft=1\wasdrafttrue\else\wasdraftfalse\fi
    \@msidraft=\@ne
  \fi
  \let\next=\readFRAMEparams
  \fi
 \next
 }%
\def\IFRAME#1#2#3#4#5#6{%
      \bgroup
      \let\QCTOptA\empty
      \let\QCTOptB\empty
      \let\QCBOptA\empty
      \let\QCBOptB\empty
      #6%
      \parindent=0pt
      \leftskip=0pt
      \rightskip=0pt
      \setbox0=\hbox{\QCBOptA}%
      \@tempdima=#1\relax
      \ifOverFrame
          % Do this later
          \typeout{This is not implemented yet}%
          \show\HELP
      \else
         \ifdim\wd0>\@tempdima
            \advance\@tempdima by \@tempdima
            \ifdim\wd0 >\@tempdima
               \setbox1 =\vbox{%
                  \unskip\hbox to \@tempdima{\hfill\GRAPHIC{#5}{#4}{#1}{#2}{#3}\hfill}%
                  \unskip\hbox to \@tempdima{\parbox[b]{\@tempdima}{\QCBOptA}}%
               }%
               \wd1=\@tempdima
            \else
               \textwidth=\wd0
               \setbox1 =\vbox{%
                 \noindent\hbox to \wd0{\hfill\GRAPHIC{#5}{#4}{#1}{#2}{#3}\hfill}\\%
                 \noindent\hbox{\QCBOptA}%
               }%
               \wd1=\wd0
            \fi
         \else
            \ifdim\wd0>0pt
              \hsize=\@tempdima
              \setbox1=\vbox{%
                \unskip\GRAPHIC{#5}{#4}{#1}{#2}{0pt}%
                \break
                \unskip\hbox to \@tempdima{\hfill \QCBOptA\hfill}%
              }%
              \wd1=\@tempdima
           \else
              \hsize=\@tempdima
              \setbox1=\vbox{%
                \unskip\GRAPHIC{#5}{#4}{#1}{#2}{0pt}%
              }%
              \wd1=\@tempdima
           \fi
         \fi
         \@tempdimb=\ht1
         %\advance\@tempdimb by \dp1
         \advance\@tempdimb by -#2
         \advance\@tempdimb by #3
         \leavevmode
         \raise -\@tempdimb \hbox{\box1}%
      \fi
      \egroup%
}%
\def\DFRAME#1#2#3#4#5{%
  \vspace\topsep
  \hfil\break
  \bgroup
     \leftskip\@flushglue
	 \rightskip\@flushglue
	 \parindent\z@
	 \parfillskip\z@skip
     \let\QCTOptA\empty
     \let\QCTOptB\empty
     \let\QCBOptA\empty
     \let\QCBOptB\empty
	 \vbox\bgroup
        \ifOverFrame 
           #5\QCTOptA\par
        \fi
        \GRAPHIC{#4}{#3}{#1}{#2}{\z@}%
        \ifUnderFrame 
           \break#5\QCBOptA
        \fi
	 \egroup
  \egroup
  \vspace\topsep
  \break
}%
\def\FFRAME#1#2#3#4#5#6#7{%
 %If float.sty loaded and float option is 'h', change to 'H'  (gp) 1998/09/05
  \@ifundefined{floatstyle}
    {%floatstyle undefined (and float.sty not present), no change
     \begin{figure}[#1]%
    }
    {%floatstyle DEFINED
	 \ifx#1h%Only the h parameter, change to H
      \begin{figure}[H]%
	 \else
      \begin{figure}[#1]%
	 \fi
	}
  \let\QCTOptA\empty
  \let\QCTOptB\empty
  \let\QCBOptA\empty
  \let\QCBOptB\empty
  \ifOverFrame
    #4
    \ifx\QCTOptA\empty
    \else
      \ifx\QCTOptB\empty
        \caption{\QCTOptA}%
      \else
        \caption[\QCTOptB]{\QCTOptA}%
      \fi
    \fi
    \ifUnderFrame\else
      \label{#5}%
    \fi
  \else
    \UnderFrametrue%
  \fi
  \begin{center}\GRAPHIC{#7}{#6}{#2}{#3}{\z@}\end{center}%
  \ifUnderFrame
    #4
    \ifx\QCBOptA\empty
      \caption{}%
    \else
      \ifx\QCBOptB\empty
        \caption{\QCBOptA}%
      \else
        \caption[\QCBOptB]{\QCBOptA}%
      \fi
    \fi
    \label{#5}%
  \fi
  \end{figure}%
 }%
\def\makeactives{
  \catcode`\"=\active
  \catcode`\;=\active
  \catcode`\:=\active
  \catcode`\'=\active
  \catcode`\~=\active
}
   \gdef\activesoff{%
      \def"{\string"}%
      \def;{\string;}%
      \def:{\string:}%
      \def'{\string'}%
      \def~{\string~}%
      %\bbl@deactivate{"}%
      %\bbl@deactivate{;}%
      %\bbl@deactivate{:}%
      %\bbl@deactivate{'}%
    }
\def\FRAME#1#2#3#4#5#6#7#8{%
 \bgroup
 \ifnum\@msidraft=\@ne
   \wasdrafttrue
 \else
   \wasdraftfalse%
 \fi
 \def\LaTeXparams{}%
 \dispkind=\z@
 \def\LaTeXparams{}%
 \doFRAMEparams{#1}%
 \ifnum\dispkind=\z@\IFRAME{#2}{#3}{#4}{#7}{#8}{#5}\else
  \ifnum\dispkind=\@ne\DFRAME{#2}{#3}{#7}{#8}{#5}\else
   \ifnum\dispkind=\tw@
    \edef\@tempa{\noexpand\FFRAME{\LaTeXparams}}%
    \@tempa{#2}{#3}{#5}{#6}{#7}{#8}%
    \fi
   \fi
  \fi
  \ifwasdraft\@msidraft=1\else\@msidraft=0\fi{}%
  \egroup
 }%
\def\TEXUX#1{"texux"}
\long\def\QQQ#1#2{%
     \long\expandafter\def\csname#1\endcsname{#2}}%
\long\def\QQA#1#2{}%
\def\QTR#1#2{{\csname#1\endcsname {#2}}}%
\def\EXPAND#1[#2]#3{}%
\def\NOEXPAND#1[#2]#3{}%
\def\LaTeXparent#1{}%
\def\ChildStyles#1{}%
\def\ChildDefaults#1{}%
\def\QTagDef#1#2#3{}%
  \providecommand{\UNICODE}[2][]{\protect\rule{.1in}{.1in}}
  \providecommand{\U}[1]{\protect\rule{.1in}{.1in}}
\def\QQfnmark#1{\footnotemark}
 \def\abstract{%
  \if@twocolumn
   \section*{Abstract (Not appropriate in this style!)}%
   \else \small 
   \begin{center}{\bf Abstract\vspace{-.5em}\vspace{\z@}}\end{center}%
   \quotation 
   \fi
  }%
   \def\registered{\relax\ifmmode{}\r@gistered
                    \else$\m@th\r@gistered$\fi}%
 \def\r@gistered{^{\ooalign
  {\hfil\raise.07ex\hbox{$\scriptstyle\rm\text{R}$}\hfil\crcr
  \mathhexbox20D}}}}{}%
\newdimen\theight
\def\newfmtname{LaTeX2e}
  \DeclareOldFontCommand{\rm}{\normalfont\rmfamily}{\mathrm}
  \DeclareOldFontCommand{\sf}{\normalfont\sffamily}{\mathsf}
  \DeclareOldFontCommand{\tt}{\normalfont\ttfamily}{\mathtt}
  \DeclareOldFontCommand{\bf}{\normalfont\bfseries}{\mathbf}
  \DeclareOldFontCommand{\it}{\normalfont\itshape}{\mathit}
  \DeclareOldFontCommand{\sl}{\normalfont\slshape}{\@nomath\sl}
  \DeclareOldFontCommand{\sc}{\normalfont\scshape}{\@nomath\sc}
\def\alpha{{\Greekmath 010B}}%
\def\beta{{\Greekmath 010C}}%
\def\gamma{{\Greekmath 010D}}%
\def\delta{{\Greekmath 010E}}%
\def\epsilon{{\Greekmath 010F}}%
\def\zeta{{\Greekmath 0110}}%
\def\eta{{\Greekmath 0111}}%
\def\theta{{\Greekmath 0112}}%
\def\iota{{\Greekmath 0113}}%
\def\kappa{{\Greekmath 0114}}%
\def\lambda{{\Greekmath 0115}}%
\def\mu{{\Greekmath 0116}}%
\def\nu{{\Greekmath 0117}}%
\def\xi{{\Greekmath 0118}}%
\def\pi{{\Greekmath 0119}}%
\def\rho{{\Greekmath 011A}}%
\def\sigma{{\Greekmath 011B}}%
\def\tau{{\Greekmath 011C}}%
\def\upsilon{{\Greekmath 011D}}%
\def\phi{{\Greekmath 011E}}%
\def\chi{{\Greekmath 011F}}%
\def\psi{{\Greekmath 0120}}%
\def\omega{{\Greekmath 0121}}%
\def\varepsilon{{\Greekmath 0122}}%
\def\vartheta{{\Greekmath 0123}}%
\def\varpi{{\Greekmath 0124}}%
\def\varrho{{\Greekmath 0125}}%
\def\varsigma{{\Greekmath 0126}}%
\def\varphi{{\Greekmath 0127}}%
\def\nabla{{\Greekmath 0272}}
\def\FindBoldGroup{%
   {\setbox0=\hbox{$\mathbf{x\global\edef\theboldgroup{\the\mathgroup}}$}}%
}
\def\Greekmath#1#2#3#4{%
    \if@compatibility
        \ifnum\mathgroup=\symbold
           \mathchoice{\mbox{\boldmath$\displaystyle\mathchar"#1#2#3#4$}}%
                      {\mbox{\boldmath$\textstyle\mathchar"#1#2#3#4$}}%
                      {\mbox{\boldmath$\scriptstyle\mathchar"#1#2#3#4$}}%
                      {\mbox{\boldmath$\scriptscriptstyle\mathchar"#1#2#3#4$}}%
        \else
           \mathchar"#1#2#3#4% 
        \fi 
    \else 
        \FindBoldGroup
        \ifnum\mathgroup=\theboldgroup % For 2e
           \mathchoice{\mbox{\boldmath$\displaystyle\mathchar"#1#2#3#4$}}%
                      {\mbox{\boldmath$\textstyle\mathchar"#1#2#3#4$}}%
                      {\mbox{\boldmath$\scriptstyle\mathchar"#1#2#3#4$}}%
                      {\mbox{\boldmath$\scriptscriptstyle\mathchar"#1#2#3#4$}}%
        \else
           \mathchar"#1#2#3#4% 
        \fi     	    
	  \fi}
\newif\ifGreekBold  \GreekBoldfalse
\let\SAVEPBF=\pbf
\def\pbf{\GreekBoldtrue\SAVEPBF}%
  \newcounter{equationnumber}  
  \def\mathletters{%
     \addtocounter{equation}{1}
     \edef\@currentlabel{\theequation}%
     \setcounter{equationnumber}{\c@equation}
     \setcounter{equation}{0}%
     \edef\theequation{\@currentlabel\noexpand\alph{equation}}%
  }
    \def\BibTeX{{\rm B\kern-.05em{\sc i\kern-.025em b}\kern-.08em
                 T\kern-.1667em\lower.7ex\hbox{E}\kern-.125emX}}}{}%
\def\AmS{{\protect\usefont{OMS}{cmsy}{m}{n}%
                A\kern-.1667em\lower.5ex\hbox{M}\kern-.125emS}}}{}%
\def\@@eqncr{\let\@tempa\relax
    \ifcase\@eqcnt \def\@tempa{& & &}\or \def\@tempa{& &}%
      \else \def\@tempa{&}\fi
     \@tempa
     \if@eqnsw
        \iftag@
           \@taggnum
        \else
           \@eqnnum\stepcounter{equation}%
        \fi
     \fi
     \global\tag@false
     \global\@eqnswtrue
     \global\@eqcnt\z@\cr}
\def\TCItag{\@ifnextchar*{\@TCItagstar}{\@TCItag}}
\def\@TCItag#1{%
    \global\tag@true
    \global\def\@taggnum{(#1)}%
    \global\def\@currentlabel{#1}}
\def\@TCItagstar*#1{%
    \global\tag@true
    \global\def\@taggnum{#1}%
    \global\def\@currentlabel{#1}}
\def\ExitTCILatex{\makeatother }
\if@compatibility\message{amsmath already loaded}\fi\aftergroup\ExitTCILatex}
\if@compatibility\message{amstex already loaded}\fi\aftergroup\ExitTCILatex}
\if@compatibility\message{amsgen already loaded}\fi\aftergroup\ExitTCILatex}
\let\DOTSI\relax
\def\RIfM@{\relax\ifmmode}%
\def\FN@{\futurelet\next}%
\def\iint{\DOTSI\intno@\tw@\FN@\ints@}%
\def\iiint{\DOTSI\intno@\thr@@\FN@\ints@}%
\def\iiiint{\DOTSI\intno@4 \FN@\ints@}%
\def\idotsint{\DOTSI\intno@\z@\FN@\ints@}%
\def\ints@{\findlimits@\ints@@}%
\newif\iflimtoken@
\newif\iflimits@
\def\findlimits@{\limtoken@true\ifx\next\limits\limits@true
 \else\ifx\next\nolimits\limits@false\else
 \limtoken@false\ifx\ilimits@\nolimits\limits@false\else
 \ifinner\limits@false\else\limits@true\fi\fi\fi\fi}%
\def\multint@{\int\ifnum\intno@=\z@\intdots@                          %1
 \else\intkern@\fi                                                    %2
 \ifnum\intno@>\tw@\int\intkern@\fi                                   %3
 \ifnum\intno@>\thr@@\int\intkern@\fi                                 %4
 \int}%                                                               %5
\def\multintlimits@{\intop\ifnum\intno@=\z@\intdots@\else\intkern@\fi
 \ifnum\intno@>\tw@\intop\intkern@\fi
 \ifnum\intno@>\thr@@\intop\intkern@\fi\intop}%
\def\intic@{%
    \mathchoice{\hskip.5em}{\hskip.4em}{\hskip.4em}{\hskip.4em}}%
\def\negintic@{\mathchoice
 {\hskip-.5em}{\hskip-.4em}{\hskip-.4em}{\hskip-.4em}}%
\def\ints@@{\iflimtoken@                                              %1
 \def\ints@@@{\iflimits@\negintic@
   \mathop{\intic@\multintlimits@}\limits                             %2
  \else\multint@\nolimits\fi                                          %3
  \eat@}%                                                             %4
 \else                                                                %5
 \def\ints@@@{\iflimits@\negintic@
  \mathop{\intic@\multintlimits@}\limits\else
  \multint@\nolimits\fi}\fi\ints@@@}%
\def\intkern@{\mathchoice{\!\!\!}{\!\!}{\!\!}{\!\!}}%
\def\plaincdots@{\mathinner{\cdotp\cdotp\cdotp}}%
\def\intdots@{\mathchoice{\plaincdots@}%
 {{\cdotp}\mkern1.5mu{\cdotp}\mkern1.5mu{\cdotp}}%
 {{\cdotp}\mkern1mu{\cdotp}\mkern1mu{\cdotp}}%
 {{\cdotp}\mkern1mu{\cdotp}\mkern1mu{\cdotp}}}%
\def\RIfM@{\relax\protect\ifmmode}
\def\text{\RIfM@\expandafter\text@\else\expandafter\mbox\fi}
\let\nfss@text\text
\def\text@#1{\mathchoice
   {\textdef@\displaystyle\f@size{#1}}%
   {\textdef@\textstyle\tf@size{\firstchoice@false #1}}%
   {\textdef@\textstyle\sf@size{\firstchoice@false #1}}%
   {\textdef@\textstyle \ssf@size{\firstchoice@false #1}}%
   \glb@settings}
\def\textdef@#1#2#3{\hbox{{%
                    \everymath{#1}%
                    \let\f@size#2\selectfont
                    #3}}}
\newif\iffirstchoice@
\def\Let@{\relax\iffalse{\fi\let\\=\cr\iffalse}\fi}%
\def\vspace@{\def\vspace##1{\crcr\noalign{\vskip##1\relax}}}%
\def\multilimits@{\bgroup\vspace@\Let@
 \baselineskip\fontdimen10 \scriptfont\tw@
 \advance\baselineskip\fontdimen12 \scriptfont\tw@
 \lineskip\thr@@\fontdimen8 \scriptfont\thr@@
 \lineskiplimit\lineskip
 \vbox\bgroup\ialign\bgroup\hfil$\m@th\scriptstyle{##}$\hfil\crcr}%
\def\Sb{_\multilimits@}%
\def\endSb{\crcr\egroup\egroup\egroup}%
\def\Sp{^\multilimits@}%
\newdimen\ex@
\def\rightarrowfill@#1{$#1\m@th\mathord-\mkern-6mu\cleaders
 \hbox{$#1\mkern-2mu\mathord-\mkern-2mu$}\hfill
 \mkern-6mu\mathord\rightarrow$}%
\def\leftarrowfill@#1{$#1\m@th\mathord\leftarrow\mkern-6mu\cleaders
 \hbox{$#1\mkern-2mu\mathord-\mkern-2mu$}\hfill\mkern-6mu\mathord-$}%
\def\leftrightarrowfill@#1{$#1\m@th\mathord\leftarrow
\mkern-6mu\cleaders
 \hbox{$#1\mkern-2mu\mathord-\mkern-2mu$}\hfill
 \mkern-6mu\mathord\rightarrow$}%
\def\overrightarrow{\mathpalette\overrightarrow@}%
\def\overrightarrow@#1#2{\vbox{\ialign{##\crcr\rightarrowfill@#1\crcr
 \noalign{\kern-\ex@\nointerlineskip}$\m@th\hfil#1#2\hfil$\crcr}}}%
\def\overleftarrow{\mathpalette\overleftarrow@}%
\def\overleftarrow@#1#2{\vbox{\ialign{##\crcr\leftarrowfill@#1\crcr
 \noalign{\kern-\ex@\nointerlineskip}$\m@th\hfil#1#2\hfil$\crcr}}}%
\def\overleftrightarrow{\mathpalette\overleftrightarrow@}%
\def\overleftrightarrow@#1#2{\vbox{\ialign{##\crcr
   \leftrightarrowfill@#1\crcr
 \noalign{\kern-\ex@\nointerlineskip}$\m@th\hfil#1#2\hfil$\crcr}}}%
\def\underrightarrow{\mathpalette\underrightarrow@}%
\def\underrightarrow@#1#2{\vtop{\ialign{##\crcr$\m@th\hfil#1#2\hfil
  $\crcr\noalign{\nointerlineskip}\rightarrowfill@#1\crcr}}}%
\def\underleftarrow{\mathpalette\underleftarrow@}%
\def\underleftarrow@#1#2{\vtop{\ialign{##\crcr$\m@th\hfil#1#2\hfil
  $\crcr\noalign{\nointerlineskip}\leftarrowfill@#1\crcr}}}%
\def\underleftrightarrow{\mathpalette\underleftrightarrow@}%
\def\underleftrightarrow@#1#2{\vtop{\ialign{##\crcr$\m@th
  \hfil#1#2\hfil$\crcr
 \noalign{\nointerlineskip}\leftrightarrowfill@#1\crcr}}}%
\def\qopnamewl@#1{\mathop{\operator@font#1}\nlimits@}
\let\nlimits@\displaylimits
\def\setboxz@h{\setbox\z@\hbox}
\def\varlim@#1#2{\mathop{\vtop{\ialign{##\crcr
 \hfil$#1\m@th\operator@font lim$\hfil\crcr
 \noalign{\nointerlineskip}#2#1\crcr
 \noalign{\nointerlineskip\kern-\ex@}\crcr}}}}
 \def\rightarrowfill@#1{\m@th\setboxz@h{$#1-$}\ht\z@\z@
  $#1\copy\z@\mkern-6mu\cleaders
  \hbox{$#1\mkern-2mu\box\z@\mkern-2mu$}\hfill
  \mkern-6mu\mathord\rightarrow$}
\def\leftarrowfill@#1{\m@th\setboxz@h{$#1-$}\ht\z@\z@
  $#1\mathord\leftarrow\mkern-6mu\cleaders
  \hbox{$#1\mkern-2mu\copy\z@\mkern-2mu$}\hfill
  \mkern-6mu\box\z@$}
\def\projlim{\qopnamewl@{proj\,lim}}
\def\injlim{\qopnamewl@{inj\,lim}}
\def\varinjlim{\mathpalette\varlim@\rightarrowfill@}
\def\varprojlim{\mathpalette\varlim@\leftarrowfill@}
\def\varliminf{\mathpalette\varliminf@{}}
\def\varliminf@#1{\mathop{\underline{\vrule\@depth.2\ex@\@width\z@
   \hbox{$#1\m@th\operator@font lim$}}}}
\def\varlimsup{\mathpalette\varlimsup@{}}
\def\varlimsup@#1{\mathop{\overline
  {\hbox{$#1\m@th\operator@font lim$}}}}
\def\align{\@verbatim \frenchspacing\@vobeyspaces \@alignverbatim
You are using the "align" environment in a style in which it is not defined.}
\let\csname endalign*\endcsname =\endtrivlist
\def\alignat{\@verbatim \frenchspacing\@vobeyspaces \@alignatverbatim
You are using the "alignat" environment in a style in which it is not defined.}
\let\csname endalignat*\endcsname =\endtrivlist
\def\xalignat{\@verbatim \frenchspacing\@vobeyspaces \@xalignatverbatim
You are using the "xalignat" environment in a style in which it is not defined.}
\let\csname endxalignat*\endcsname =\endtrivlist
\def\gather{\@verbatim \frenchspacing\@vobeyspaces \@gatherverbatim
You are using the "gather" environment in a style in which it is not defined.}
\let\csname endgather*\endcsname =\endtrivlist
\def\multiline{\@verbatim \frenchspacing\@vobeyspaces \@multilineverbatim
You are using the "multiline" environment in a style in which it is not defined.}
\let\csname endmultiline*\endcsname =\endtrivlist
\def\arrax{\@verbatim \frenchspacing\@vobeyspaces \@arraxverbatim
You are using a type of "array" construct that is only allowed in AmS-LaTeX.}
\def\tabulax{\@verbatim \frenchspacing\@vobeyspaces \@tabulaxverbatim
You are using a type of "tabular" construct that is only allowed in AmS-LaTeX.}
\let\csname endarrax*\endcsname =\endtrivlist
\let\csname endtabulax*\endcsname =\endtrivlist
 \def\endequation{%
     \ifmmode\ifinner % FLEQN hack
      \iftag@
        \addtocounter{equation}{-1} % undo the increment made in the begin part
        $\hfil
           \displaywidth\linewidth\@taggnum\egroup \endtrivlist
        \global\tag@false
        \global\@ignoretrue   
      \else
        $\hfil
           \displaywidth\linewidth\@eqnnum\egroup \endtrivlist
        \global\tag@false
        \global\@ignoretrue 
      \fi
     \else   
      \iftag@
        \addtocounter{equation}{-1} % undo the increment made in the begin part
        \eqno \hbox{\@taggnum}
        \global\tag@false%
        $$\global\@ignoretrue
      \else
        \eqno \hbox{\@eqnnum}% $$ BRACE MATCHING HACK
        $$\global\@ignoretrue
      \fi
     \fi\fi
 } 
 \newif\iftag@ \tag@false
 \def\TCItag{\@ifnextchar*{\@TCItagstar}{\@TCItag}}
 \def\@TCItag#1{%
     \global\tag@true
     \global\def\@taggnum{(#1)}%
     \global\def\@currentlabel{#1}}
 \def\@TCItagstar*#1{%
     \global\tag@true
     \global\def\@taggnum{#1}%
     \global\def\@currentlabel{#1}}
     \def\tag{\@ifnextchar*{\@tagstar}{\@tag}}
     \def\@tag#1{%
         \global\tag@true
         \global\def\@taggnum{(#1)}}
     \def\@tagstar*#1{%
         \global\tag@true
         \global\def\@taggnum{#1}}
\def\RM{\rm}
\def\qed{\hfill$\square$\par}
\def\Qcb#1{#1}
\def\FRAME#1#2#3#4#5#6#7#8
\begin{document}
\title[Davies' method]{The Davies method revisited for heat kernel upper
bounds of regular Dirichlet forms on metric measure spaces}
\author[Hu]{Jiaxin Hu}
\address{Department of Mathematical Sciences, Tsinghua University, Beijing
100084, China.}
\email{hujiaxin@mail.tsinghua.edu.cn}
\author[Li]{Xuliang Li}
\address{Department of Mathematical Sciences, Tsinghua University, Beijing
100084, China.}
\email{lixuliang12@mails.tsinghua.edu.cn}
\thanks{\noindent JH was supported by NSFC No.11371217, SRFDP
No.20130002110003.}
\date{April 2017}

\begin{abstract}
We apply the Davies method to prove that for any regular Dirichlet form on a
metric measure space, an off-diagonal stable-like upper bound of the heat
kernel is equivalent to the conjunction of the on-diagonal upper bound, a
cutoff inequality on any two concentric balls, and the jump kernel upper
bound, for any walk dimension. If in addition the jump kernel vanishes, that
is, if the Dirichlet form is strongly local, we obtain sub-Gaussian upper
bound. This gives a unified approach to obtaining heat kernel upper bounds
for both the non-local and the local Dirichlet forms.
\end{abstract}

\subjclass[2010]{35K08, 28A80, 60J35}
\keywords{Heat kernel, Dirichlet form, cutoff inequality on balls, Davies
method.}
\maketitle
\tableofcontents

\section{Introduction}

We are concerned with heat kernel upper bounds for both nonlocal and local
Dirichlet forms on metric measure spaces.

Let $(M,d)$ be a locally compact separable metric space and $\mu $ be a
Radon measure on $M$ with full support, and the triple $(M,d,\mu )$ is
called a \emph{metric measure space}. Let $\left( \mathcal{E},\mathcal{F}%
\right) $ be a regular \emph{Dirichlet form} in $L^{2}\left( M,\mu \right) $%
, and $\mathcal{L}$ be its generator (non-positive definite self-adjoint).
Let 
\begin{equation*}
\left\{ P_{t}=e^{t\mathcal{L}}\right\} _{t\geq 0}
\end{equation*}%
be the associated heat semigroup. Recall that the form $(\mathcal{E},%
\mathcal{F})$ is \emph{conservative} if $P_{t}1=1$ holds for all $t>0$.

Let $\Omega $ be a non-empty open set on $M$, let $\mathcal{F}(\Omega )$ be
the closure of $\mathcal{F}\cap C_{0}(\Omega )$ in the norm of $\mathcal{F}$%
, where $C_{0}(\Omega )$ is the space of all continuous functions with
compact supports in $\Omega $. It is known that if $(\mathcal{E},\mathcal{F}%
) $ is regular, then $(\mathcal{E},\mathcal{F}(\Omega ))$ is a regular
Dirichlet form in $L^{2}(\Omega ,\mu )$ (cf. \cite[Lemma 1.4.2 (ii) p.29]%
{fukushima2010dirichlet}). We denote by $\mathcal{L}^{\Omega }$ the
generator of $(\mathcal{E},\mathcal{F}(\Omega ))$ and by $\{P_{t}^{\Omega
}\} $ the associated semigroup.

A family $\left\{ p_{t}\right\} _{t>0}$ of non-negative $\mu \times \mu $%
-measurable functions on $M\times M$ is called the \emph{heat kernel} of $%
\left( \mathcal{E},\mathcal{F}\right) $ if for any $f\in L^{2}(M,\mu )$ and $%
t>0$, 
\begin{equation*}
P_{t}f(x)=\int_{M}p_{t}(x,y)f(y)d\mu (y)
\end{equation*}%
for $\mu $-almost all $x\in M$.

Typically, there are two distinct types of heat kernel estimates on \emph{%
unbounded} metric spaces, depending on whether the form $\left( \mathcal{E},%
\mathcal{F}\right) $ is local or not. Indeed, assume that the heat kernel
exists and satisfies the following estimate 
\begin{equation}
p_{t}\left( x,y\right) \asymp \frac{C}{t^{\alpha /\beta }}\Phi \left( \frac{%
d(x,y)}{ct^{1/\beta }}\right)  \label{90}
\end{equation}%
with some function $\Phi $ and two positive parameters $\alpha ,\beta $,
where the sign $\asymp $ means that both $\leq $ and $\geq $ are true but
with different values of $C,c.$ Then either $\Phi \left( s\right) =\exp
\left( -s^{\frac{\beta }{\beta -1}}\right) $ (thus $\left( \mathcal{E},%
\mathcal{F}\right) $ is local), or $\Phi \left( s\right) =\left( 1+s\right)
^{-\left( \alpha +\beta \right) }$ (thus $\left( \mathcal{E},\mathcal{F}%
\right) $ is non-local), see \cite{grku2008dic}. For the local case, the
heat kernel $p_{t}(x,y)$ admits the following \emph{Gaussian }($\beta =2$)-
or \emph{Sub-Gaussian }($\beta >2$) estimate: 
\begin{equation}
p_{t}(x,y)\asymp \frac{C}{t^{\alpha /\beta }}\exp \left( -c\left( \frac{%
d(x,y)}{t^{1/\beta }}\right) ^{\beta /(\beta -1)}\right) ,
\label{eq:sub_G_es}
\end{equation}%
where $\alpha >0$ is the Hausdorff dimension and $\beta \geq 2$ is termed
the walk dimension, see for example \cite{barlow1998diffusions, bbkt10,
barlow1999brownian, barlow1988Brownian, hambly1999transition, kig09}. Some
equivalence conditions are stated in \cite%
{barlow2012equivalence,grigor2014heatkernels, grigor2014CE, grigor2012two}.
On the other hand, for the non-local case, the heat kernel $p_{t}(x,y)$
admits the \emph{stable-like} estimates: 
\begin{equation}
p_{t}(x,y)\asymp \frac{1}{t^{\alpha /\beta }}\left( 1+\frac{d(x,y)}{%
t^{1/\beta }}\right) ^{-\left( \alpha +\beta \right) }
\label{eq:HK_nonlocal}
\end{equation}%
where $\alpha >0$ and $\beta >0$, see for example, \cite{bagrku2009hkub,
bale2002hk, chenku2003hkdsets, chenku2008hkjump} for $0<\beta <2$, and \cite%
{ckw16, ghh16HE, ghh16LE, grhulau2014nonlocal} for any $\beta >0$. Note that
estimate (\ref{eq:HK_nonlocal}) can also be obtained by using the
subordination technique, see for example \cite%
{grigor2003heatkernel,huz2009br, kumagai2003remark,stos2000stable}. It was
shown in \cite{grku2008dic} that estimates (\ref{eq:sub_G_es}) and (\ref%
{eq:HK_nonlocal}) exhaust all possible two-sided estimates of heat kernels
upon assuming (\ref{90}).

Recently, Murugan and Saloff-Coste extend the Davies method developed in 
\cite{carlen1987upper,Dav87} and obtain heat kernel upper bounds, for local
Dirichlet forms on metric spaces in \cite{murugan2015davies} and for
non-local Dirichlet forms on infinite graphs in \cite{murugan2015heat},
where a cutoff inequality introduced in \cite{andres2013energy} plays an
important role.

The purpose of this paper is twofold:

\begin{itemize}
\item[$(1)$] to extend the result in \cite{murugan2015heat} to the metric
measure space;

\item[$(2)$] to unify the Davies method for both local and nonlocal
Dirichlet forms.
\end{itemize}

More precisely, we give some equivalence characterizations of heat kernel
upper bounds both in (\ref{eq:HK_nonlocal}) for any $\beta >0$ and in (\ref%
{eq:sub_G_es}) for any $\beta >1$, see Theorem \ref{T2} below, by applying
the Davies method in a unified way. These characterization are stable under
bounded perturbation of Dirichlet forms. We mention that one of our starting
point here is from the \emph{cutoff inequality on balls }to be stated below,
labelled by condition (\ref{eq:CSA}), which is subtly distinct from the
similar conditions in previous papers \cite{andres2013energy}, \cite%
{grigor2014CE}, \cite{murugan2015davies, murugan2015heat}, \cite{ckw16}, 
\cite{ghh16HE}. Also the metric space considered in this paper is allowed to
be bounded or unbounded, unlike the most previous ones in which the metric
space is always assumed to be unbounded.

Let us return to the general setup of a metric measure space $(M,d,\mu )$
equipped with a regular Dirichlet form $(\mathcal{E},\mathcal{F})$. Assume
that $\mathcal{E}$ admits the following decomposition without \emph{killing
term}: 
\begin{equation}
\mathcal{E}(u,v)=\mathcal{E}^{(L)}(u,v)+\mathcal{E}^{(J)}(u,v),
\label{eq:DF_dec}
\end{equation}%
where $\mathcal{E}^{(L)}$ denotes the \emph{local part} and 
\begin{equation*}
\mathcal{E}^{(J)}(u,v)=\underset{M\times M\setminus \text{diag}}{\int \int }%
(u(x)-u(y))(v(x)-v(y))dj(x,y)
\end{equation*}%
is a \emph{jump part} with jump measure $j$ defined on $M\times M\setminus $%
diag. We assume that $j$ has a density with respect to $\mu \times \mu $,
denoted by $J(x,y)$, and so the jump part $\mathcal{E}^{(J)}$ can be written
as 
\begin{equation}
\mathcal{E}^{(J)}(u,v)=\underset{M\times M}{\int \int }%
(u(x)-u(y))(v(x)-v(y))J(x,y)d\mu (y)d\mu (x).  \label{eq:J_density}
\end{equation}

For every $w\in \mathcal{F}\cap L^{\infty }$, there exists a unique positive
finite Radon measure $\Gamma (w)$ on $M,$ termed an \emph{energy measure},
such that for any $\phi \in \mathcal{F}\cap L^{\infty }$\footnote{%
Any function in $\mathcal{F}$ admits a quasi-continuous modification (cf. 
\cite[Theorem 2.1.3,p.71]{fukushima2010dirichlet}), and moreover, any energy
measure charges no set of zero capacity (cf. \cite[Lemma 3.2.4,p.127]%
{fukushima2010dirichlet}). Without loss of generality, every function in $%
\mathcal{F}$ will be replaced by its quasi-continuous modification in this
paper. Thus, the integral $\int fd\Gamma (g)$ is well-defined for any $%
f,g\in \mathcal{F}$.}, 
\begin{equation}
\int \phi d\Gamma (w)=\mathcal{E}(w\phi ,w)-\frac{1}{2}\mathcal{E}(\phi
,w^{2}),  \label{eq:g_fml}
\end{equation}%
where and in the sequel the integration $\int $ means over $M$. The energy
measure $\Gamma (w)$ can be uniquely extended to any $w\in \mathcal{F}$. For
functions $v,w\in \mathcal{F}$, the signed measure $\Gamma (v,w)$ is defined
by 
\begin{equation}
\Gamma (v,w)=\frac{1}{2}\left( \Gamma (v+w)-\Gamma (v)-\Gamma (w)\right)
\label{eq:g_fl}
\end{equation}%
(see \cite[formula (3.11)]{mosco1994composite}), and $\Gamma (v,v)\equiv
\Gamma (v)$ and 
\begin{equation*}
\mathcal{E}(v,w)=\int_{M}d\Gamma (v,w).
\end{equation*}%
For any $u,v,w\in \mathcal{F}\cap L^{\infty }$, we have by (\ref{eq:g_fml}),%
\begin{equation}
\int ud\Gamma (v,w)=\frac{1}{2}\left( \mathcal{E}(uv,w)+\mathcal{E}(v,uw)-%
\mathcal{E}(vw,u)\right) ,  \label{eq:g_fml-1}
\end{equation}%
and, from this, 
\begin{equation}
\int d\Gamma (uv,w)=\mathcal{E}(uv,w)=\int ud\Gamma (v,w)+\int vd\Gamma
(u,w).  \label{eq:L-rule}
\end{equation}%
(This can be viewed as the weak version of the product rule.)

Denote by $\Gamma _{L}(\cdot )$ the energy measure associated with \emph{%
local part} $\mathcal{E}^{(L)}$ and let $dk$ be the killing measure. Then by
Beurling-Deny's formulae (\cite[(3.2.23) p.127]{fukushima2010dirichlet}):%
\begin{equation*}
d\Gamma (u)(x)=d\Gamma _{L}(u)(x)+\left\{ \int_{M}\left( u(x)-u(y)\right)
^{2}J(x,y)d\mu (y)\right\} d\mu (x)+u^{2}(x)dk(x)
\end{equation*}%
(for this moment we do not assume that the killing term vanishes), that is,
for any $u,v\in \mathcal{F}\cap L^{\infty }$ and any non-empty open subset $%
\Omega $ of $M$,%
\begin{equation}
\int_{\Omega }u^{2}d\Gamma (v)=\int_{\Omega }u^{2}d\Gamma
_{L}(v)+\int_{\Omega \times M}u^{2}(x)\left( v(x)-v(y)\right) ^{2}J(x,y)d\mu
(y)d\mu (x)+\int_{\Omega }u^{2}v^{2}dk.  \label{11}
\end{equation}%
In particular, if there is no killing measure, then 
\begin{equation}
d\Gamma (u)(x)=d\Gamma _{L}(u)(x)+\int_{M\setminus \text{diag}}\left(
u(x)-u(y)\right) ^{2}dj(x,y).  \label{eq:G_B_def}
\end{equation}

Denote by $B\left( x,r\right) $ the open metric ball of radius $r>0$
centered at $x$. Sometimes we write $B_{r}$ for a ball of radius $r$ without
mentioning its center. Denote by $\lambda B$ a concentric ball of $B$ with
radius $\lambda r$ where $r$ is the radius of $B$. Let 
\begin{equation*}
V\left( x,r\right) :=\mu \left( B\left( x,r\right) \right)
\end{equation*}%
be the \emph{volume function}.

For a regular Dirichlet form $(\mathcal{E},\mathcal{F})$ with a jump kernel $%
J$, we define for $\rho \geq 0$ 
\begin{equation}
\mathcal{E}_{\rho }(u,v)=\mathcal{E}^{(L)}(u,v)+\int_{M}\int_{B(x,\rho
)}(u(x)-u(y))(v(x)-v(y))J(x,y)d\mu (y)d\mu (x).  \label{eq:DF_q}
\end{equation}%
It is known that $(\mathcal{E}_{\rho },\mathcal{F})$ is a closable bilinear
form and can be extended to a regular Dirichlet form $(\mathcal{E}_{\rho },%
\mathcal{F}_{\rho })$ with $\mathcal{F}\subset \mathcal{F}_{\rho }$ (see 
\cite[Section 4]{grhulau2014nonlocal}). Denote by $q_{t}(x,y)$, $%
\{Q_{t}\}_{t\geq 0}$, $\Gamma _{\rho }(\cdot )$ the heat kernel, heat
semigroup and energy measure of $(\mathcal{E}_{\rho },\mathcal{F}_{\rho })$,
respectively (we sometimes drop the superscript \textquotedblleft $\rho $"
from $q_{t}^{(\rho )}(x,y)$, $\{Q_{t}^{(\rho )}\}_{t\geq 0}$ for
simplicity). Note that if $J\equiv 0$ or if $\rho =0$, then $\left( \mathcal{%
E}_{\rho },\mathcal{F}_{\rho }\right) =(\mathcal{E},\mathcal{F})=(\mathcal{E}%
^{(L)},\mathcal{F})$, which is strongly local. Denote by 
\begin{equation}
d\Gamma _{\rho }(u)(x)=d\Gamma _{L}(u)(x)+\left\{ \int_{B(x,\rho
)}(u(x)-u(y))^{2}J(x,y)d\mu (y)\right\} d\mu (x).  \label{eq:ene_meas_split}
\end{equation}

Throughout this paper we fix some numbers $\alpha >0,\beta >0$. Fix also
some value $R_{0}\in (0,$diam$M]$ that will be used for localization of all
the hypotheses. In the sequel, the letters $C,C^{\prime },c,c^{\prime }$
denote universal positive constants which may vary at each occurrence.

Introduce the following conditions.

\begin{definitions}[Upper $\protect\alpha$-regularity]
For all $x\in M$ and all $r>0$, 
\begin{equation}
V(x,r)\leq C r^{\alpha}.  \tag*{(V\ensuremath{_\leq})}  \label{eq:vol_l}
\end{equation}
\end{definitions}

\begin{definition}[On-diagonal upper estimate]
The heat kernel $p_{t}$ exists and satisfies the on-diagonal upper estimate 
\begin{equation}
p_{t}(x,y)\leq \frac{C}{t^{\alpha /\beta }}\exp \left( R_{0}^{-\beta
}t\right)  \tag{DUE}  \label{eq:DUE}
\end{equation}%
for all $t>0$ and $\mu $-almost all $x$, $y\in M$, where $C$ is independent
of $R_{0}$ (and also of $t,x,y$).
\end{definition}

\begin{definition}[Upper estimate of non-local type]
The heat kernel $p_{t}$ exists and satisfies the off-diagonal upper estimate 
\begin{equation}
p_{t}(x,y)\leq \frac{C}{t^{\alpha /\beta }}\exp \left( R_{0}^{-\beta
}t\right) \left( 1+\frac{d(x,y)}{t^{1/\beta }}\right) ^{-(\alpha +\beta )} 
\tag{UE}  \label{eq:UE}
\end{equation}%
for all $t>0$ and $\mu $-almost all $x,y\in M$, where $C$ is independent of $%
R_{0}$ (and also of $t,x,y$).
\end{definition}

\begin{definition}[Upper bound of jump density]
The jump density $J(x,y)$ exists and admits the estimate 
\begin{equation}
J(x,y)\leq Cd(x,y)^{-(\alpha +\beta )}  \tag*{(J{\ensuremath{_\leq}})}
\label{eq:J_less}
\end{equation}%
for $\mu $-almost all $x,y\in M$.
\end{definition}

If $(\mathcal{E},\mathcal{F})$ is local, we have $J\equiv 0$ so that \ref%
{eq:J_less} is trivially satisfied. In general, condition \ref{eq:J_less}
restricts the long jumps and can be viewed as a measure of non-locality.

\begin{definition}[Upper estimate of local type]
The heat kernel $p_{t}$ exists and satisfies the off-diagonal upper estimate 
\begin{equation}
p_{t}(x,y)\leq \frac{C}{t^{\alpha /\beta }}\left( R_{0}^{-\beta }t\right)
\exp \left( -\left( \frac{d(x,y)}{ct^{1/\beta }}\right) ^{\beta /(\beta
-1)}\right)  \tag*{(UE{\ensuremath{_{\textrm{loc}} }})}  \label{eq:UE_loc}
\end{equation}%
for all $t>0$ and $\mu $-almost all $x,y\in M$, where $\beta >1$ and $C,c>0$
are independent of $R_{0},t,x,y$.
\end{definition}

Let $\Omega $ be an open subset of $M$ and $A\Subset \Omega $ be a Borel set
(where $A\Subset \Omega $ means that $A$ is precompact and its closure $%
\overline{A}\subset {\Omega }$). Recall that $\phi $ is a \emph{cutoff
function} of $(A,\Omega )$ if $\phi \in \mathcal{F}(\Omega )$, $0\leq \phi
\leq 1$ in $M$, and $\phi =1$ in an open neighborhood of $A$. We denote the
set of all cutoff functions of $(A,\Omega )$ by \emph{cutoff}$(A,\Omega )$.

\begin{definition}[Cutoff inequality on balls]
The \emph{cutoff inequality on balls} holds on $M$ if there exist constants $%
C_{1}\geq 0,C_{2}>0$ such that for any two concentric balls $B_{R},B_{R+r}$
with $0<R<R+r<R_{0}$, there exists some function $\phi \in \text{cutoff}%
(B_{R},B_{R+r})$ satisfying that 
\begin{equation}
\int_{M}u^{2}d\Gamma (\phi )\leq C_{1}\int_{M}d\Gamma (u)+\frac{C_{2}}{%
r^{\beta }}\int_{M}u^{2}d\mu  \tag{CIB}  \label{eq:CSA}
\end{equation}%
for all $u\in \mathcal{F}\cap L^{\infty }$, where $d\Gamma (u)$ is defined
by (\ref{eq:G_B_def}).
\end{definition}

Note that constants $C_{1},C_{2}$ in condition (\ref{eq:CSA}) are universal
(independent of $u,\phi ,B_{R},B_{R+r}$ and also of $R_{0}$), and the cutoff
function $\phi $ is independent of the function $u$ (but of course,
depending on the balls $B_{R},B_{R+r}$).

\begin{remark}
\RM This kind of neat condition (\ref{eq:CSA}) was first introduced by
Andres and Barlow in \cite{andres2013energy} under the framework of local
Dirichlet forms, which is called Condition $\left( CSA\right) $ -- a \emph{%
cutoff Sobolev inequality in annulus}\footnote{%
Condition ($CSA$) is actually unrelated to the classical \emph{Sobolev}
inequality.}. The condition (\ref{eq:CSA}) here is slightly weaker than
Condition\emph{\ }$\left( CSA\right) $ wherein the two integrals in the
right-hand side of (\ref{eq:CSA}) are both over the annulus (but here these
two integrals are over the whole space $M$)$.$
\end{remark}

\begin{remark}
\RM A similar condition was introduced in \cite{ghh16HE} for jump-type
Dirichlet forms, which is termed \emph{Condition }$\left( AB\right) $ named
after Andres and Barlow, and in which the cutoff function $\phi $ may depend
on $u$ (see also the generalized capacity condition ($Gcap$) stated in \cite%
{grigor2014CE}). Note that the condition (\ref{eq:CSA}) here is slightly
different from Condition\emph{\ }$\left( AB\right) $ in \cite{ghh16HE} in
that the second integral in (\ref{eq:CSA}) here is over $M$ against measure $%
d\Gamma (u)$, instead of over the larger ball $B_{R+r}$ against measure $%
\phi ^{2}d\Gamma (u)$ in \cite{ghh16HE}. Of course, it would be better to
relax condition (\ref{eq:CSA}) so that the cutoff function $\phi $ may
depend on $u$ as in \cite{ghh16HE}, \cite{grigor2014CE}-- and the reader may
consult the explanation in \cite[the remark after Definition 2.8, p.1803]%
{murugan2015davies}. More variants than (\ref{eq:CSA}) were addressed in 
\cite[Definition 2.2]{ckw16} for the nonlocal case.
\end{remark}

\begin{remark}
\RM Condition (\ref{eq:CSA}) can be easily verified with $C_{1}=0$ for
purely jump-type Dirichlet forms with $0<\beta <2$, provided that conditions %
\ref{eq:J_less}, \ref{eq:vol_l} are satisfied, using the standard \emph{bump}
function on balls (see \cite[Remark 1.7]{ckw16} or \cite[the proof of
Corollary 2.12]{ghh16HE}).
\end{remark}

The following is the main contribution of this paper.

\begin{theorem}
\label{thm:Mr2}Let $(M,d,\mu )$ be a metric measure space with precompact
balls, and let $(\mathcal{E},\mathcal{F})$ be a regular Dirichlet form in $%
L^{2}(M,\mu )$ satisfying (\ref{eq:DF_dec}) with jump kernel $J$. If
condition \ref{eq:vol_l} is satisfied, then the following implication holds: 
\begin{equation}
(\ref{eq:DUE})+(\ref{eq:CSA})+\ref{eq:J_less}\Rightarrow (\ref{eq:UE}).
\label{48}
\end{equation}%
If in addition $\beta >1$, then%
\begin{equation}
(\ref{eq:DUE})+(\ref{eq:CSA})+(J\equiv 0)\Rightarrow \ref{eq:UE_loc}.
\label{N}
\end{equation}
\end{theorem}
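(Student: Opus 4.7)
The plan is to apply Davies's perturbation method to the truncated Dirichlet form $(\mathcal{E}_\rho,\mathcal{F}_\rho)$ defined in (\ref{eq:DF_q}) and recover $p_t$ from $q_t^{(\rho)}$ via a Meyer-type decomposition in the nonlocal case. Fix $x_0,y_0\in M$, set $R=d(x_0,y_0)$, and choose $\rho\asymp\max(R,t^{1/\beta})$ when proving (\ref{48}). Since $\mathcal{E}_\rho\le\mathcal{E}$ as forms on $\mathcal{F}$, hypothesis (\ref{eq:DUE}) transfers to $q_t^{(\rho)}$. A standard Meyer decomposition then yields
\begin{equation*}
p_t(x_0,y_0)\le q_t^{(\rho)}(x_0,y_0) + Ct\,\esup_{z\in M}\int_{B(z,\rho)^c} J(z,w)\,d\mu(w),
\end{equation*}
so \ref{eq:J_less} together with \ref{eq:vol_l} bounds the long-jump correction by $Ct\rho^{-\beta}$, producing precisely the polynomial tail of (\ref{eq:UE}). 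For (\ref{N}), the hypothesis $J\equiv 0$ makes $(\mathcal{E},\mathcal{F})$ strongly local, $(\mathcal{E}_\rho,\mathcal{F}_\rho)=(\mathcal{E},\mathcal{F})$ for every $\rho$, and no Meyer step is required.

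\textbf{Davies form bound from (CIB).} The heart of the proof is a Davies-type lower bound: for every suitable bounded $\psi$,
\begin{equation*}
\mathcal{E}_\rho(e^{-\psi}f,\,e^{\psi}f)\ge -\Lambda(\psi)\,\|f\|_2^2,\qquad f\in\mathcal{F}_\rho\cap L^\infty,
\end{equation*}
with $\Lambda(\psi)=\Lambda_L(\psi)+\Lambda_J(\psi)$ where $\Lambda_L,\Lambda_J$ collect the local and jump contributions. Using (\ref{eq:L-rule}) the local part reduces to integrals of the form $\int u^2\,d\Gamma_L(\psi)$, which I intend to bound by applying (\ref{eq:CSA}) layer-cake-wise to ball-cutoffs along level sets of $\psi$; for $\psi$ varying by $O(1)$ on scale $r$ this should give $\Lambda_L(\psi)\lesssim r^{-\beta}$. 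The jump part
\begin{equation*}
\Lambda_J(\psi)\le\int_M\!\!\int_{B(x,\rho)}\bigl(e^{\psi(y)-\psi(x)}-1\bigr)^{2} J(x,y)\,d\mu(y)\,d\mu(x)
\end{equation*}
is controlled directly by \ref{eq:J_less} and \ref{eq:vol_l} once $|\psi(y)-\psi(x)|\lesssim 1$ on $\rho$-balls, again yielding a $\rho^{-\beta}$ bound. Davies--Hille duality converts this form estimate into $\|e^{-\psi}Q_t^{(\rho)}e^{\psi}\|_{2\to 2}\le e^{\Lambda(\psi)t}$; combining with the on-diagonal estimate for $q_t^{(\rho)}$ gives the pointwise Davies bound
\begin{equation*}
q_t^{(\rho)}(x_0,y_0)\le \frac{C}{t^{\alpha/\beta}}\exp\!\bigl(R_0^{-\beta}t+\Lambda(\psi)t-\psi(x_0)+\psi(y_0)\bigr).
\end{equation*}

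\textbf{Optimization and main obstacle.} For (\ref{N}) I would take $\psi(z)=\lambda(d(y_0,z)\wedge R)$, obtain $\Lambda(\psi)\lesssim\lambda^\beta$, and optimize $\lambda$ so as to produce the sub-Gaussian exponent $\exp(-c(R/t^{1/\beta})^{\beta/(\beta-1)})$ of \ref{eq:UE_loc}; the hypothesis $\beta>1$ is exactly what makes the Legendre-type exponent $\beta/(\beta-1)$ finite. For (\ref{48}) I use the truncated $\psi(z)=\lambda(d(y_0,z)\wedge\rho)$ with $\lambda\rho$ chosen so that the exponential factor $e^{-\lambda\rho}$, combined with the Meyer remainder $t\rho^{-\beta}$, reproduces the full stable-like power $-(\alpha+\beta)$. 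The main obstacle is the local estimate $\Lambda_L(\psi)\lesssim r^{-\beta}$ when $\beta>2$: there is no pointwise gradient of $\psi$ available, so every bound must be built out of energy measures via the layer-cake application of (\ref{eq:CSA}). The residual term $C_1\int d\Gamma(u)$ in (\ref{eq:CSA}), summed naively over layers, would accumulate and spoil the estimate; reabsorbing it should require either an Andres--Barlow style self-improvement of (\ref{eq:CSA}) or a direct variational argument in the spirit of \cite{murugan2015davies, murugan2015heat}. Synchronizing the choices of $\rho$, of the Lipschitz parameter $\lambda$, and of the layer thickness so that all three remainders match is the place where the proof becomes genuinely delicate.
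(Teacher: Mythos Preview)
Your overall architecture---truncate, run Davies on $(\mathcal{E}_\rho,\mathcal{F}_\rho)$, then add back the long jumps via Meyer---is exactly what the paper does, and you have correctly located the main obstacle (the $C_1\int d\Gamma(u)$ term in (\ref{eq:CSA}) must be absorbed, which forces a self-improvement of (\ref{eq:CSA}) in the spirit of Andres--Barlow; this is the content of Proposition~\ref{prop:sCSA}). Two substantive gaps remain.

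\textbf{From $L^2$ to pointwise.} Your sentence ``combining with the on-diagonal estimate for $q_t^{(\rho)}$ gives the pointwise Davies bound'' hides the whole Carlen--Kusuoka--Stroock iteration. The $L^2\!\to\!L^2$ bound $\|Q_t^{\psi}\|_{2\to 2}\le e^{\Lambda t}$ does \emph{not} combine with (\ref{eq:DUE}) to give $\|Q_t^{\psi}\|_{1\to\infty}$: the naive route $\|Q_t^{\psi}\|_{2\to\infty}\le e^{2\|\psi\|_\infty}\|Q_t\|_{2\to\infty}$ introduces a factor $e^{2\lambda}$ that kills the gain $e^{-\lambda}$. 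One needs the Davies inequality at \emph{every} power $p\ge 1$,
\[
\mathcal{E}_\rho\!\bigl(e^{-\psi}f,\,e^{\psi}f^{2p-1}\bigr)\ \ge\ \tfrac{1}{4p}\,\mathcal{E}(f^p)\;-\;K_0\,p^{2\beta+1}\!\int f^{2p}\,d\mu
\]
(Lemmas~\ref{lem:DV_1}--\ref{lem:Dv_1}), and then climb from $L^2$ to $L^\infty$ via Nash and the nonlinear ODE of Lemma~\ref{lem:F-S}. A crucial subtlety is that the cutoff $\phi$ produced by the self-improved (\ref{eq:CSA}) depends on the absorption parameter $n$, hence on $p$; the paper takes $\psi_k=\lambda\phi_{p_k,\lambda}$, a \emph{different} weight at each step, and uses (\ref{eq:almost_perfect}) to show that the $\psi_k$ are uniformly close so the iterates can be compared. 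Your fixed choice $\psi(z)=\lambda(d(y_0,z)\wedge R)$ is not known to lie in $\mathcal{F}$ and cannot be plugged into $d\Gamma$; the weight must be built from the (\ref{eq:CSA}) cutoffs themselves.

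\textbf{The exponent in the local case.} Your claim $\Lambda_L(\psi)\lesssim\lambda^\beta$, and the resulting direct optimization to $\exp(-c(R/t^{1/\beta})^{\beta/(\beta-1)})$, does not follow from (\ref{eq:CSA}) as stated. After the self-improvement (Proposition~\ref{prop:sCSA}) one can only trade $C_1\mapsto C_3/n$ at the price $C_2\mapsto C_4 n^\beta$; absorbing the energy term forces $n\asymp p\lambda$, and the remaining bound is $K_0\asymp \lambda^{2\beta+2}/r^{\beta}$, not $\lambda^\beta$. The optimization then yields only the weak exponent $\beta/(2\beta+1)$ in (\ref{C12}). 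The sharp sub-Gaussian exponent requires a second, separate self-improvement at the level of the heat kernel (Lemma~\ref{UEiter}): one feeds (\ref{C12}) back into the tail bound $P_t 1_{B^c}$ and iterates $\beta'\mapsto\beta'-1$ until $\beta'=\beta$. The direct route you describe would work under the stronger annulus-localized condition $(CSA)$ of Andres--Barlow (where the self-improvement gains $n^{\beta/2-1}$; see the remark after Proposition~\ref{prop:sCSA} and \cite{murugan2015davies}), but not under the paper's (\ref{eq:CSA}).
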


We apply the Davies method to prove both (\ref{48}) and (\ref{N}).
Particularly, in order to show the implication (\ref{N}), we first derive a
weaker upper bound of the heat kernel, see (\ref{C12}) below, and then
obtain \ref{eq:UE_loc} by a self-improvement technique used in \cite%
{grigor2014upper}, see Lemma \ref{UEiter} and Remark \ref{R:It} below.
Murugan and Saloff-Coste \cite{murugan2015davies} obtained a similar
implication under condition ($CSA$) introduced by Andres and Barlow, but
with a much simpler argument (without recourse to Lemma \ref{UEiter}).

As a consequence of Theorem \ref{thm:Mr2}, we have the following.

\begin{theorem}
\label{T2} Let $(M,d,\mu )$ be a metric measure space with precompact balls
and $(\mathcal{E},\mathcal{F})$ be a regular conservative Dirichlet form in $%
L^{2}$ with a jump kernel $J$. If \ref{eq:vol_l} holds, then 
\begin{equation}
(\ref{eq:UE})\Leftrightarrow (\ref{eq:DUE})+(\ref{eq:CSA})+\ref{eq:J_less}.
\label{T2a}
\end{equation}

If in addition $\beta >1$, then 
\begin{equation}
\ref{eq:UE_loc}\Leftrightarrow (\ref{eq:DUE})+(\ref{eq:CSA})+(J\equiv 0).
\label{T2b}
\end{equation}
\end{theorem}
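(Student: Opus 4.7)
The ``$\Leftarrow$'' halves of (\ref{T2a}) and (\ref{T2b}) are exactly the content of Theorem \ref{thm:Mr2} and require no further argument, so my plan reduces to the two forward implications. In each case the task naturally splits into three pieces corresponding to the three conditions to be recovered: (\ref{eq:DUE}), the jump-kernel hypothesis (\ref{eq:J_less} or $J\equiv 0$), and (\ref{eq:CSA}).

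The first two pieces are essentially routine. For (\ref{eq:DUE}): in (\ref{eq:UE}) the distance factor $(1+d(x,y)/t^{1/\beta})^{-(\alpha+\beta)}$ is $\le 1$, while in \ref{eq:UE_loc} the sub-Gaussian exponential is $\le 1$ and $R_0^{-\beta}t \le e^{R_0^{-\beta}t}$. For the jump-kernel statement, my plan is to invoke the weak L\'{e}vy-system identity associated with a regular Dirichlet form (cf.\ \cite{fukushima2010dirichlet}), which for disjoint Borel sets $A,B$ reads
\[
j(A\times B) \;=\; \lim_{t\to 0^+} \frac{1}{t}\iint_{A\times B} p_t(x,y)\,d\mu(x)d\mu(y).
\]
Under (\ref{eq:UE}), substituting the pointwise bound and letting $t\to 0^+$ (so that the distance factor dominates once $t\le d(x,y)^\beta$) yields \ref{eq:J_less}; under \ref{eq:UE_loc}, the super-linear-in-$1/t$ decay of the exponential forces the right-hand side to vanish whenever $\text{dist}(A,B)>0$, so $J\equiv 0$ off the diagonal.

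The substantive and difficult step is the implication to (\ref{eq:CSA}). Given concentric balls $B_R \subset B_{R+r} \subset B_{R_0}$, I must produce a single, $u$-independent cutoff $\phi \in \text{cutoff}(B_R,B_{R+r})$ whose energy measure satisfies the required domination for every $u \in \mathcal{F}\cap L^\infty$. My plan is to split $d\Gamma(\phi)$ into its local and jump components via (\ref{eq:G_B_def}): the jump component is absorbed using the just-derived \ref{eq:J_less} together with the standard bump construction (as in \cite[Remark 1.7]{ckw16}), and contributes only the $r^{-\beta}u^2 d\mu$ term (in the local case this piece is vacuous). The delicate piece, and the main obstacle, is the local component $d\Gamma_L(\phi)$, since one must extract a quantitative control of the Dirichlet energy of a cutoff from only a pointwise heat-kernel upper bound. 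Here I plan to first deduce the mean-exit-time estimate $\mathbb{E}_x[\tau_{B(x,r)}] \lesssim r^\beta$ from (\ref{eq:UE}) or \ref{eq:UE_loc} together with \ref{eq:vol_l} via a Nash-type argument, and then manufacture $\phi$ by a suitable truncation of the resolvent $G^{B_{R+r}}\mathbf{1}_{B_R}$ of the part form on $B_{R+r}$; iteration of this construction over a chain of nested annuli, in the spirit of the generalized-capacity arguments of \cite{grigor2014CE}, will be needed to convert the naturally obtained $\int \phi^2 d\Gamma(u)$-type bound into the $u$-independent form required by (\ref{eq:CSA}).
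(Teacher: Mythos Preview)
Your reductions for ``$\Leftarrow$'', for (\ref{eq:DUE}), and for the jump-kernel statements are correct and essentially match the paper.

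The gap is in your route to (\ref{eq:CSA}). Two concrete problems. First, the estimate $\mathbb{E}_x[\tau_{B(x,r)}]\lesssim r^\beta$ is the wrong direction: to make a truncated resolvent into a genuine cutoff you need a \emph{lower} bound on the resolvent over the inner ball, not an upper bound, and this is exactly where conservativeness---a hypothesis of the theorem that you never invoke---enters. Second, you cannot treat the local and jump parts of $d\Gamma(\phi)$ with two different constructions of $\phi$; and if you mean to use the resolvent-based $\phi$ throughout, the ``standard bump'' argument you cite for the jump part requires a Lipschitz $\phi$ and only works when $\beta<2$, so it does not cover the range of the theorem.

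The paper avoids both issues by routing through the survival estimate (\ref{eq:S}). From (\ref{eq:UE}) or \ref{eq:UE_loc} together with \ref{eq:vol_l} one obtains the tail bound $P_t 1_{B^c}\le\varepsilon$ on $\tfrac12 B$ whenever $t^{1/\beta}\ll r$, and conservativeness converts this into (\ref{eq:S}). Lemma~\ref{lem:S_to_CSA} then builds $\phi$ as $v\wedge 1$ where $v=C_0 r^{-\beta}w$ and $w=\int_0^\infty e^{-\lambda t}P_t^{B'}1_{B'}\,dt$ is the $\lambda$-resolvent with $\lambda=r^{-\beta}$: condition (\ref{eq:S}) supplies the lower bound $w\ge C_0^{-1}r^\beta$ on the inner ball (so $\phi$ really is a cutoff), and the resolvent equation (\ref{eq:w_sol}) combined with the single inequality (\ref{eq:g_need}) controls the full energy measure $d\Gamma(\phi)$ at once, without any local/jump splitting and without the annular iteration you anticipate.
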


The proof of Theorem \ref{thm:Mr2} and Theorem \ref{T2} will be given in
Section \ref{sec:HK}.

\begin{remark}
\RM For the \emph{nonlocal} case, a similar equivalence to (\ref{T2a}) was
obtained in \cite{ckw16} with (\ref{eq:CSA}) being replaced by condition CSJ$%
\left( \phi \right) $ but for more general settings equipped with doubling
measures and for more general jump kernels involving the gauge $\phi $
(noting that $\phi (r)=r^{\beta }$ for $r\geq 0$ in this paper), and also in 
\cite{ghh16HE} with condition (\ref{eq:CSA}) being replaced by condition ($%
Gcap$) or condition $\left( AB\right) $. For the \emph{local} case, a
similar equivalence to (\ref{T2b}) was obtained in \cite{andres2013energy}, 
\cite{grigor2014CE} under different variants than condition (\ref{eq:CSA}).
\end{remark}

\section[CSA]{Cutoff inequalities on balls}

\label{sec:CSA}In this section, we first derive (\ref{eq:CSA}) from
condition $\left( S\right) $-the survival estimate to be stated below. We
then state two inequalities, see (\ref{eq:CSA_strong}), (\ref%
{eq:almost_perfect}) below, which will be used in the Davies method.
Inequality (\ref{eq:CSA_strong}) can be viewed as a self-improvement of
condition (\ref{eq:CSA}).

We need the following formula.

\begin{proposition}
Let $(\mathcal{E},\mathcal{F})$ be a regular Dirichlet form in $L^{2}(M,\mu
) $. Then, for any two functions $u\in \mathcal{F}\cap L^{\infty }$, $%
\varphi \in \mathcal{F}\cap L^{\infty }$ with \textrm{supp}$(\varphi
)\subset \Omega $ for any open subset $\Omega $ of $M$, 
\begin{equation}
\int_{\Omega }u^{2}d\Gamma _{\Omega }(\varphi )\leq 2\mathcal{E}%
(u^{2}\varphi ,\varphi )+4\int_{\Omega }\varphi ^{2}d\Gamma _{\Omega }(u),
\label{eq:g_need}
\end{equation}%
where $d\Gamma _{\Omega }(u)$ is defined by%
\begin{equation}
d\Gamma _{\Omega }(u)(x)=d\Gamma _{L}(u)(x)+\int_{M\setminus \text{diag}}%
\mathbf{1}_{\Omega }(y)\left( u(x)-u(y)\right) ^{2}dj(x,y).  \label{21}
\end{equation}
\end{proposition}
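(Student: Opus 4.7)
My plan is to decompose along the Beurling--Deny splitting into the local part $(\Gamma_L,\mathcal{E}^{(L)})$ and the jump part, prove the inequality separately on each piece, and then add. A useful preliminary remark is that since $\mathrm{supp}(\varphi)\subset\Omega$ and $\Gamma_L$ is strongly local, the measure $d\Gamma_L(\varphi)$ vanishes off $\Omega$ and $\varphi^2\,d\Gamma_L(u)$ is already supported in $\Omega$, so the ``$\int_\Omega$'' prefix on the local terms is automatic.

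For the local piece I would use the derivation identity $d\Gamma_L(u^2\varphi,\varphi)=u^2\,d\Gamma_L(\varphi)+2u\varphi\,d\Gamma_L(u,\varphi)$, valid pointwise as signed measures for strongly local forms, to write
\[
\mathcal{E}^{(L)}(u^2\varphi,\varphi)=\int u^2\,d\Gamma_L(\varphi)+2\int u\varphi\,d\Gamma_L(u,\varphi).
\]
Cauchy--Schwarz for the symmetric bilinear measure $\Gamma_L$ bounds the cross term by $\left(\int\varphi^2\,d\Gamma_L(u)\right)^{1/2}\left(\int u^2\,d\Gamma_L(\varphi)\right)^{1/2}$, and a weighted Young inequality of the shape $2ab\le 2a^2+\tfrac{1}{2}b^2$ absorbs half of $\int u^2\,d\Gamma_L(\varphi)$ into the left-hand side, yielding $\int u^2\,d\Gamma_L(\varphi)\le 2\mathcal{E}^{(L)}(u^2\varphi,\varphi)+4\int_\Omega\varphi^2\,d\Gamma_L(u)$.

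The jump part is the main obstacle, because no pointwise derivation rule is available. My plan is to use the algebraic identity
\[
u^2(x)\varphi(x)-u^2(y)\varphi(y)=u(x)^2(\varphi(x)-\varphi(y))+(u(x)^2-u(y)^2)\varphi(y),
\]
and integrate against $dj$ over $\Omega\times\Omega$ to obtain $I_{\Omega\times\Omega}=P_\Omega+S$, where $P_\Omega:=\iint_{\Omega\times\Omega}u(x)^2(\varphi(x)-\varphi(y))^2\,dj$ is precisely the jump contribution to the left-hand side and $S$ is the cross term. Exploiting the symmetry of $dj$ under $x\leftrightarrow y$, I rewrite $S=\tfrac{1}{2}\iint_{\Omega\times\Omega}(u(x)-u(y))(u(x)+u(y))(\varphi(x)-\varphi(y))(\varphi(x)+\varphi(y))\,dj$; a Cauchy--Schwarz pairing, together with $(u(x)+u(y))^2\le 2(u(x)^2+u(y)^2)$ and $(\varphi(x)+\varphi(y))^2\le 2(\varphi(x)^2+\varphi(y)^2)$ and one more use of symmetry, then gives $|S|\le 2\sqrt{P_\Omega Q_\Omega}$, where $Q_\Omega:=\iint_{\Omega\times\Omega}\varphi(x)^2(u(x)-u(y))^2\,dj$ is exactly the jump part of $\int_\Omega\varphi^2\,d\Gamma_\Omega(u)$. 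A final Young step $2\sqrt{P_\Omega Q_\Omega}\le\tfrac{1}{2}P_\Omega+2Q_\Omega$ absorbs half of $P_\Omega$ and yields $P_\Omega\le 2I_{\Omega\times\Omega}+4Q_\Omega$.

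To upgrade $I_{\Omega\times\Omega}$ to $\mathcal{E}^{(J)}(u^2\varphi,\varphi)$ I would split $M\times M=(\Omega\times\Omega)\cup(\Omega\times\Omega^c)\cup(\Omega^c\times\Omega)\cup(\Omega^c\times\Omega^c)$; since $\varphi\equiv 0$ off $\Omega$, the last block vanishes and the two mixed blocks contribute $2\iint_{\Omega\times\Omega^c}u(x)^2\varphi(x)^2\,dj\ge 0$, so $I_{\Omega\times\Omega}\le\mathcal{E}^{(J)}(u^2\varphi,\varphi)$. Summing the local and jump inequalities and recognising $\int_\Omega\varphi^2\,d\Gamma_L(u)+Q_\Omega=\int_\Omega\varphi^2\,d\Gamma_\Omega(u)$ finishes the proof. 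The hard part will be organising the symmetrisation of $S$ so that the Cauchy--Schwarz step produces exactly the $\Omega$-restricted quantity $Q_\Omega$ appearing in $d\Gamma_\Omega(u)$, rather than the larger unrestricted jump energy on $M\times M$; the non-negativity of the boundary piece in $\mathcal{E}^{(J)}(u^2\varphi,\varphi)$, a consequence of $\mathrm{supp}(\varphi)\subset\Omega$, is what allows passing from $I_{\Omega\times\Omega}$ to the full form on the right.
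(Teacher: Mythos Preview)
Your proposal is correct and follows essentially the same route as the paper: both split into local and jump parts, handle the local part via the Leibniz rule plus Cauchy--Schwarz, rewrite the jump part via the identity $(u^2\varphi)(x)-(u^2\varphi)(y)=u(x)^2(\varphi(x)-\varphi(y))+(u(x)^2-u(y)^2)\varphi(y)$, bound the cross term by Cauchy--Schwarz, and finally use $\mathrm{supp}(\varphi)\subset\Omega$ to observe that the off-diagonal block $\Omega\times\Omega^c$ contributes nonnegatively to $\mathcal{E}^{(J)}(u^2\varphi,\varphi)$. The only cosmetic difference is that the paper splits $(u(x)+u(y))\varphi(y)=u(x)\varphi(y)+u(y)\varphi(y)$ and applies Cauchy--Schwarz to each summand, whereas you first symmetrise $S$ in $(x,y)$ and apply Cauchy--Schwarz once; both lead to the same bound $|S|\le\tfrac12 P_\Omega+2Q_\Omega$.
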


\begin{proof}
We will use formula (\ref{11}). Note that 
\begin{equation*}
u^{2}\in \mathcal{F}\cap L^{\infty }\text{ \ and \ }uv,u^{2}v\in \mathcal{F}%
\cap L^{\infty }
\end{equation*}%
if $u\in \mathcal{F}\cap L^{\infty }$, $v\in \mathcal{F}\cap L^{\infty }$.

We first show that 
\begin{equation}
\int_{\Omega }u^{2}d\Gamma _{L}(\varphi )\leq 2\mathcal{E}%
^{(L)}(u^{2}\varphi ,\varphi )+4\int_{\Omega }\varphi ^{2}d\Gamma _{L}(u).
\label{L-part}
\end{equation}%
Indeed, using the Leibniz and chain rules of $d\Gamma _{L}(\cdot )$ (cf. 
\cite[Lemma 3.2.5, Theorem 3.2.2]{fukushima2010dirichlet}) and using
Cauchy-Schwarz, we have%
\begin{eqnarray*}
\int_{M}u^{2}d\Gamma _{L}(\varphi ) &=&\int_{M}d\Gamma _{L}(u^{2}\varphi
,\varphi )-2\int_{M}u\varphi d\Gamma _{L}(u,\varphi ) \\
&\leq &\mathcal{E}^{(L)}(u^{2}\varphi ,\varphi )+\frac{1}{2}%
\int_{M}u^{2}d\Gamma _{L}(\varphi )+2\int_{M}\varphi ^{2}d\Gamma _{L}(u),
\end{eqnarray*}%
which gives that%
\begin{equation*}
\int_{M}u^{2}d\Gamma _{L}(\varphi )\leq 2\mathcal{E}^{(L)}(u^{2}\varphi
,\varphi )+4\int_{M}\varphi ^{2}d\Gamma _{L}(u).
\end{equation*}%
Since $\varphi $ is supported in $\Omega $, we see that $d\Gamma
_{L}(\varphi )=0$ outside $\Omega $ (cf. \cite[formula (3.2.26) p.128]%
{fukushima2010dirichlet}), thus proving (\ref{L-part}).

Next we show that%
\begin{equation}
\int_{\Omega }u^{2}d\Gamma _{\Omega }^{(J)}(\varphi )\leq 2\mathcal{E}%
^{(J)}(u^{2}\varphi ,\varphi )+4\int_{\Omega }\varphi ^{2}d\Gamma _{\Omega
}^{(J)}(u),  \label{J-part}
\end{equation}%
where the measure $d\Gamma _{\Omega }^{(J)}$ is defined by 
\begin{equation*}
d\Gamma _{\Omega }^{(J)}(f,g)(x)=\int_{\Omega \setminus \mathrm{diag}}\left(
f(x)-f(y)\right) \left( g(x)-g(y)\right) dj(x,y).
\end{equation*}%
Indeed, noting that%
\begin{equation*}
u^{2}(x)(\varphi (x)-\varphi (y))^{2}=\left\{ \left[ (u^{2}\varphi
)(x)-(u^{2}\varphi )(y)\right] -\left[ u^{2}(x)-u^{2}(y)\right] \varphi
(y)\right\} (\varphi (x)-\varphi (y)),
\end{equation*}%
we have 
\begin{align*}
\int_{\Omega \times \Omega \setminus \mathrm{diag}}u^{2}(x)\left[ \varphi
(x)-\varphi (y)\right] ^{2}dj(x,y)=& \int_{\Omega \times \Omega \setminus 
\mathrm{diag}}\left[ (u^{2}\varphi )(x)-(u^{2}\varphi )(y)\right] (\varphi
(x)-\varphi (y))dj(x,y) \\
& -\int_{\Omega \times \Omega \setminus \mathrm{diag}}\left[
u^{2}(x)-u^{2}(y)\right] \varphi (y)(\varphi (x)-\varphi (y))dj(x,y),
\end{align*}%
which gives that%
\begin{equation}
\int_{\Omega }u^{2}d\Gamma _{\Omega }^{(J)}(\varphi )=\int_{\Omega }d\Gamma
_{\Omega }^{(J)}(u^{2}\varphi ,\varphi )-\int_{\Omega }\varphi d\Gamma
_{\Omega }^{(J)}(u^{2},\varphi ).  \label{Jp}
\end{equation}%
To estimate the last term, note that%
\begin{eqnarray*}
-\int_{\Omega }\varphi d\Gamma _{\Omega }^{(J)}(u^{2},\varphi )
&=&-\int_{\Omega \times \Omega \setminus \mathrm{diag}}(u(x)+u(y))\varphi
(y)(u(x)-u(y))(\varphi (x)-\varphi (y))dj(x,y) \\
&=&-\int_{\Omega \times \Omega \setminus \mathrm{diag}}u(x)\varphi
(y)(u(x)-u(y))(\varphi (x)-\varphi (y))dj(x,y) \\
&&-\int_{\Omega \times \Omega \setminus \mathrm{diag}}u(y)\varphi
(y)(u(x)-u(y))(\varphi (x)-\varphi (y))dj(x,y).
\end{eqnarray*}%
From this and using the Cauchy-Schwarz inequality, we derive 
\begin{align*}
-\int_{\Omega }\varphi d& \Gamma _{\Omega }^{(J)}(u^{2},\varphi ) \\
\leq & \frac{1}{4}\int_{\Omega \times \Omega \setminus \mathrm{diag}%
}u^{2}(x)(\varphi (x)-\varphi (y))^{2}dj(x,y)+\int_{\Omega \times \Omega
\setminus \mathrm{diag}}\varphi ^{2}(y)(u(x)-u(y))^{2}dj(x,y) \\
& +\frac{1}{4}\int_{\Omega \times \Omega \setminus \mathrm{diag}%
}u^{2}(y)(\varphi (x)-\varphi (y))^{2}dj(x,y)+\int_{\Omega \times \Omega
\setminus \mathrm{diag}}\varphi ^{2}(y)(u(x)-u(y))^{2}dj(x,y) \\
=& \frac{1}{2}\int_{\Omega }u^{2}d\Gamma _{\Omega }^{(J)}(\varphi
)+2\int_{\Omega }\varphi ^{2}d\Gamma _{\Omega }^{(J)}(u).
\end{align*}%
Plugging this into (\ref{Jp}), we have%
\begin{equation}
\int_{\Omega }u^{2}d\Gamma _{\Omega }^{(J)}(\varphi )\leq 2\int_{\Omega
}d\Gamma _{\Omega }^{(J)}(u^{2}\varphi ,\varphi )+4\int_{\Omega }\varphi
^{2}d\Gamma _{\Omega }^{(J)}(u).  \label{31}
\end{equation}%
As $\varphi $ vanishes outside $\Omega $, we see that%
\begin{eqnarray*}
\mathcal{E}^{(J)}(u^{2}\varphi ,\varphi ) &=&\int_{M\times M\setminus 
\mathrm{diag}}\left[ (u^{2}\varphi )(x)-(u^{2}\varphi )(y)\right] (\varphi
(x)-\varphi (y))dj(x,y) \\
&=&\int_{\Omega \times \Omega \setminus \mathrm{diag}}+2\int_{\Omega \times
\Omega ^{c}}+\int_{\Omega ^{c}\times \Omega ^{c}\setminus \mathrm{diag}%
}\cdots \\
&=&\int_{\Omega }d\Gamma _{\Omega }^{(J)}(u^{2}\varphi ,\varphi
)+2\int_{\Omega \times \Omega ^{c}}(u^{2}\varphi ^{2})(x)dj(x,y) \\
&\geq &\int_{\Omega }d\Gamma _{\Omega }^{(J)}(u^{2}\varphi ,\varphi ),
\end{eqnarray*}%
which together with (\ref{31}) implies (\ref{J-part}).

Finally, summing up (\ref{L-part}), (\ref{J-part}) we conclude from (\ref{21}%
) that (\ref{eq:g_need}) is true.
\end{proof}

We state condition $\left( S\right) $.

\begin{definition}[Survival estimate]
There exist constants $\varepsilon $, $\delta \in (0,1)$ such that, for all
balls $B$ of radius $r\in (0,R_{0})$ and for all $t^{1/\beta }\leq \delta r$%
, 
\begin{equation}
1-P_{t}^{B}1_{B}(x)\leq \varepsilon  \tag{S}  \label{eq:S}
\end{equation}%
for $\mu $-almost all $x\in \frac{1}{4}B$.
\end{definition}

\begin{lemma}
\label{lem:S_to_CSA} Let $(\mathcal{E},\mathcal{F})$ be a regular Dirichlet
form in $L^{2}(M,\mu )$. Then 
\begin{equation*}
(\ref{eq:S})\Rightarrow (\ref{eq:CSA}).
\end{equation*}
\end{lemma}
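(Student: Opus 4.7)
The plan is to build $\phi$ as a Lipschitz truncation of the $\lambda$-equilibrium potential of $\Omega := B_{R+r}$, at resolvent scale $\lambda \asymp r^{-\beta}$ dictated by the survival estimate (\ref{eq:S}). Concretely, set $t := (\delta r)^{\beta}$, $\lambda := 1/t$, and $\varphi := \lambda G_\lambda^{\Omega}\mathbf{1}_{\Omega} \in \mathcal{F}(\Omega)$, where $G_\lambda^{\Omega} := (\lambda - \mathcal{L}^{\Omega})^{-1}$. For every $x \in B_R$ one has $B(x,r) \subset \Omega$, so monotonicity of the resolvent in the domain together with the Laplace-transform identity and (\ref{eq:S}) applied at the center $x$ of $B(x, r)$ (where $x \in \tfrac14 B(x,r)$ trivially) yield
\[
  \varphi(x) \geq \lambda\int_{0}^{t} e^{-\lambda s}(1 - \varepsilon)\,ds = (1 - \varepsilon)(1 - e^{-1}) =: \kappa > 0
\]
for $\mu$-almost every $x \in B_{R}$. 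Fix once and for all a $(1/\kappa)$-Lipschitz non-decreasing $\eta : [0,1] \to [0,1]$ with $\eta(0) = 0$ and $\eta \equiv 1$ on $[\kappa, 1]$, and set $\phi := \eta(\varphi)$. By the normal contraction property of Dirichlet forms, $\phi \in \mathcal{F}(\Omega)$; moreover $\phi \equiv 1$ on $B_{R}$ and $\phi \equiv 0$ off $\Omega$, so $\phi \in \textrm{cutoff}(B_{R}, B_{R+r})$.

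Next I would pass the energy bound from $\phi$ to $\varphi$: the Lipschitz chain rule for the local part together with $|\eta(a) - \eta(b)|^{2} \leq \kappa^{-2}|a - b|^{2}$ for the jump part give $d\Gamma(\phi) \leq \kappa^{-2}\, d\Gamma(\varphi)$ pointwise, so it suffices to bound $\int_M u^2 \, d\Gamma(\varphi)$ for $u \in \mathcal{F}\cap L^{\infty}$. Apply Proposition~\ref{eq:g_need} to $\varphi$ with $\Omega = B_{R+r}$. Since $u^{2}\varphi \in \mathcal{F}(\Omega)$, the resolvent identity $-\mathcal{L}^{\Omega}\varphi = \lambda(\mathbf{1}_{\Omega} - \varphi)$ yields
\[
  \mathcal{E}(u^{2}\varphi, \varphi) = \lambda\int_{\Omega} u^{2}\varphi(\mathbf{1}_{\Omega} - \varphi)\,d\mu \leq \lambda\int_{M} u^{2}\,d\mu = \frac{C}{r^{\beta}}\int_{M} u^{2}\,d\mu,
\]
while $\int_{\Omega}\varphi^{2}\,d\Gamma_{\Omega}(u) \leq \int_{M} d\Gamma(u)$ because $\varphi \leq 1$ and $d\Gamma_{\Omega} \leq d\Gamma$. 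Combining these with Proposition~\ref{eq:g_need} controls the \emph{inside-}$\Omega$ integral $\int_{\Omega} u^{2}\,d\Gamma_{\Omega}(\varphi)$ by the desired right-hand side of (\ref{eq:CSA}).

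The main obstacle is then upgrading this inside-$\Omega$ bound to the full $\int_{M} u^{2}\,d\Gamma(\varphi)$ that appears in (\ref{eq:CSA}). The discrepancy is exactly the cross-boundary jump contribution
\[
  \int_{\Omega} u^{2}\varphi^{2}\,J_{\Omega^{c}}\,d\mu + \int_{\Omega^{c}} u^{2}(x)\int_{\Omega}\varphi^{2}(y)\,J(x,y)\,d\mu(y)\,d\mu(x),
\]
where $J_{\Omega^{c}}(x) := \int_{\Omega^{c}} J(x,y)\,d\mu(y)$ is the outward jump rate. To absorb this into the right-hand side of (\ref{eq:CSA}) I would bound $J_{\Omega^{c}}(x)$ by $Cr^{-\beta}$ on the support of $\phi$, again via (\ref{eq:S}): in the weak sense $J_{\Omega^{c}}$ is proportional to $-\mathcal{L}^{\Omega}\mathbf{1}_{\Omega}$ for the jump part, and (\ref{eq:S}) applied to a finite covering of $\Omega$ by balls of radius $\asymp r$ (each placing the relevant $x$ in its $1/4$-subball) yields the desired pointwise bound. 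The symmetric term indexed by $\Omega^{c}\times \Omega$ is handled by the same argument after swapping the roles of $x$ and $y$ via symmetry of the jump kernel. This boundary-flux control is the technical heart of the proof; in the strongly local case $J \equiv 0$ the entire step is vacuous, which is why the local version of this implication is considerably easier.
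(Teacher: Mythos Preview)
Your construction of $\phi$ and the use of the resolvent equation to bound $\mathcal{E}(u^{2}\varphi,\varphi)$ are essentially the paper's approach: the paper sets $w=\int_{0}^{\infty}e^{-\lambda s}P_{s}^{B'}1_{B'}\,ds$ with $\lambda=r^{-\beta}$, uses (\ref{eq:S}) to get a uniform lower bound on $w$ over $B_R$, defines $\phi=(C_0 w/r^{\beta})\wedge 1$, and then combines Proposition~2.1 (inequality (\ref{eq:g_need})) with the resolvent identity exactly as you do.

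The genuine gap in your write-up is the ``main obstacle'' you identify at the end, which is in fact not an obstacle at all. Proposition~2.1 applies to \emph{any} open set $\Omega$ containing $\mathrm{supp}(\varphi)$; it does not require $\Omega$ to equal the support. Take $\Omega=M$. Then $d\Gamma_{\Omega}=d\Gamma$ by (\ref{21}), and (\ref{eq:g_need}) reads
\[
\int_{M}u^{2}\,d\Gamma(\varphi)\leq 2\,\mathcal{E}(u^{2}\varphi,\varphi)+4\int_{M}\varphi^{2}\,d\Gamma(u),
\]
which is already the full inequality you need; there is no cross-boundary discrepancy to repair. The paper makes exactly this move: it proves the localized estimate (\ref{eq:pre-CSA}) for arbitrary $\Omega\supset B_{R+r}$ and then specializes to $\Omega=M$.

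Your proposed repair---extracting a pointwise bound $J_{\Omega^{c}}(x)\leq Cr^{-\beta}$ from (\ref{eq:S}) via ``$J_{\Omega^{c}}$ is proportional to $-\mathcal{L}^{\Omega}1_{\Omega}$ for the jump part''---is both unnecessary and unjustified as stated: (\ref{eq:S}) controls an integrated survival probability, not pointwise outward jump rates, and the passage from one to the other would require additional hypotheses (e.g.\ \ref{eq:J_less} and \ref{eq:vol_l}) that the lemma does not assume. Drop this entire paragraph and simply apply (\ref{eq:g_need}) with $\Omega=M$.
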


\begin{proof}
Fix $x_{0}\in M$ and set $B_{0}=B(x_{0},R)$, $B^{\prime }=B(x_{0},R+r)$ for $%
0<R<R+r<R_{0}$ and let $B^{\prime }\subset \Omega $ for any open subset $%
\Omega $ of $M$. It suffices to show that there exists some $\phi \in $%
\textrm{cutoff}$(B_{0},B^{\prime })$ such that 
\begin{equation}
\int_{\Omega }u^{2}d\Gamma _{\Omega }(\phi )\leq C_{1}\int_{\Omega }\phi
^{2}d\Gamma _{\Omega }(u)+\frac{C_{2}}{r^{\beta }}\int_{\Omega }\phi
u^{2}d\mu  \label{eq:pre-CSA}
\end{equation}%
for any $u\in \mathcal{F}\cap L^{\infty }$, where the measure $d\Gamma
_{\Omega }$ is defined by (\ref{21}), since this inequality, on taking $%
\Omega =M$ and using the fact that $\phi \leq 1$ in $M$, will imply (\ref%
{eq:CSA}).

To do this, let%
\begin{equation*}
w:=\int_{0}^{+\infty }e^{-\lambda t}P_{t}^{B^{\prime }}1_{B^{\prime }}dt,
\end{equation*}%
where $\lambda =r^{-\beta }$. It is known that 
\begin{equation}
\mathcal{E}(w,\varphi )+\lambda \int_{B^{\prime }}w\varphi d\mu
=\int_{B^{\prime }}\varphi d\mu ,  \label{eq:w_sol}
\end{equation}%
for any $\varphi \in \mathcal{F}(B^{\prime })$. By \cite[(3.6) p.1503]%
{grigor2014CE}, we have that 
\begin{equation*}
te^{-\lambda t}P_{t}^{B^{\prime }}1_{B^{\prime }}\leq w\leq r^{\beta }\;%
\text{in\ }M.
\end{equation*}%
Let $z\in B_{0}$ be any point and set $B_{z}=B(z,r)\subset B^{\prime }$. An
application of (\ref{eq:S}) with $t=\left( \delta r\right) ^{\beta }$ yields
that for almost all $x\in \frac{1}{4}B_{z}$, 
\begin{equation*}
w(x)\geq te^{-\lambda t}P_{t}^{B^{\prime }}1_{B^{\prime }}(x)\geq
te^{-\lambda t}P_{t}^{B_{z}}1_{B_{z}}(x)\geq \left( \delta r\right) ^{\beta
}e^{-\delta ^{\beta }}\left( 1-\varepsilon \right) =C_{0}^{-1}r^{\beta },
\end{equation*}%
for $C_{0}=\left[ \delta ^{\beta }e^{-\delta ^{\beta }}(1-\varepsilon )%
\right] ^{-1}>1$. Hence, 
\begin{eqnarray*}
w &\leq &r^{\beta }\;\text{in }M, \\
w &\geq &C_{0}^{-1}r^{\beta }\;\text{in}\;B_{0}.
\end{eqnarray*}%
Set $v:=C_{0}\frac{w}{r^{\beta }}$. Then $v\leq C_{0}$ in $M$, and $v\geq 1$
in $B_{0}$. Define%
\begin{equation*}
\phi =v\wedge 1\text{ in }M.
\end{equation*}%
We see that $\phi \in \mathrm{cutoff}\left( B_{0},B^{\prime }\right) $. It
suffices to show such a function $\phi $ satisfies (\ref{eq:pre-CSA}).

Indeed, using (\ref{eq:g_need}) with $\varphi =v$, 
\begin{equation}
\int_{\Omega }u^{2}d\Gamma _{\Omega }(v)\leq 2\mathcal{E}(u^{2}v,v)+4\int_{%
\Omega }v^{2}d\Gamma _{\Omega }(u).  \label{eq:s_t0}
\end{equation}%
Observe that in $M$%
\begin{equation}
C_{0}\phi =C_{0}\left( v\wedge 1\right) =\left( C_{0}v\right) \wedge
C_{0}\geq v\wedge C_{0}=v,  \label{v_E}
\end{equation}%
which gives that 
\begin{equation}
\int_{\Omega }v^{2}d\Gamma _{\Omega }(u)\leq C_{0}^{2}\int_{\Omega }\phi
^{2}d\Gamma _{\Omega }(u).  \label{eq:s_t2}
\end{equation}%
On the other hand, using (\ref{eq:w_sol}) with $\varphi =u^{2}v$ and using (%
\ref{v_E})%
\begin{eqnarray}
\mathcal{E}(u^{2}v,v) &=&\frac{C_{0}}{r^{\beta }}\mathcal{E}(u^{2}v,w) 
\notag \\
&=&\frac{C_{0}}{r^{\beta }}\left\{ \int_{B^{\prime }}u^{2}vd\mu -\lambda
\int_{B^{\prime }}\left( u^{2}v\right) wd\mu \right\}  \notag \\
&\leq &\frac{C_{0}}{r^{\beta }}\int u^{2}vd\mu \leq \frac{C_{0}^{2}}{%
r^{\beta }}\int u^{2}\phi d\mu .  \label{eq:E_es0}
\end{eqnarray}%
Thus, plugging (\ref{eq:E_es0}), (\ref{eq:s_t2}) into (\ref{eq:s_t0}), we
conclude that%
\begin{equation*}
\int_{\Omega }u^{2}d\Gamma _{\Omega }(v)\leq \frac{2C_{0}^{2}}{r^{\beta }}%
\int_{\Omega }u^{2}\phi d\mu +4C_{0}^{2}\int_{\Omega }\phi ^{2}d\Gamma
_{\Omega }(u).
\end{equation*}

Finally, using the facts that $\left\vert \phi (x)-\phi (y)\right\vert \leq
\left\vert v(x)-v(y)\right\vert $ and that $\phi (x)\leq v(x)$ for any $%
x,y\in M$ and then using \cite[formula (3.2.12), p.122]%
{fukushima2010dirichlet} and (\ref{21}), 
\begin{equation*}
\int_{\Omega }u^{2}d\Gamma _{\Omega }(\phi )\leq \int_{\Omega }u^{2}d\Gamma
_{\Omega }(v).
\end{equation*}%
Therefore, we obtain (\ref{eq:pre-CSA}) with $C_{1}=4C_{0}^{2}$, $%
C_{2}=2C_{0}^{2}$.
\end{proof}

The same result in Lemma \ref{lem:S_to_CSA} was proved in \cite%
{andres2013energy, grigor2014CE} for the local case.

We show the following two inequalities (\ref{eq:CSA_strong}) and (\ref%
{eq:almost_perfect}) by using condition (\ref{eq:CSA}).

\begin{proposition}
\label{prop:sCSA} Let $(\mathcal{E},\mathcal{F})$ be a regular Dirichlet
form in $L^{2}(M,\mu )$. Let $B_{0}=B(x_{0},R)$, $B^{\prime }=B(x_{0},R+r)$
be two balls with $0<R<R+r<R_{0}$. If conditions (\ref{eq:CSA}), \ref%
{eq:vol_l}, \ref{eq:J_less} hold, then for every positive integer $n$, there
exists some function $\phi =\phi _{n}\in $ \textrm{cutoff}$(B_{0},B^{\prime
})$ satisfying that 
\begin{equation}
\int u^{2}d\Gamma (\phi )\leq \frac{C_{3}}{n}\int d\Gamma (u)+\frac{%
C_{4}n^{\beta }}{r^{\beta }}\int u^{2}d\mu  \label{eq:CSA_strong}
\end{equation}%
for all $u\in \mathcal{F}\cap L^{\infty }$, and that%
\begin{equation}
\left\Vert \phi -\Phi \right\Vert _{\infty }\leq 1/n
\label{eq:almost_perfect}
\end{equation}%
with 
\begin{equation}
\Phi (y):=\left( \frac{R+r-d(x_{0},y)}{r}\right) _{+}\wedge 1,
\label{eq:Phi_def}
\end{equation}%
where $C_{3}\geq 1,C_{4}\geq 1$ are universal constants (independent of $%
B_{0},B^{\prime },n,u$ and $R_{0})$ and $\phi $ is independent of $u$.
\end{proposition}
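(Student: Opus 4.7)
The plan is to build $\phi$ as an average of $n$ cutoff functions on thin concentric annuli, each supplied by (\ref{eq:CSA}). Set $\rho_{i}=R+ir/n$ for $i=0,1,\ldots,n$ and apply (\ref{eq:CSA}) to each pair of concentric balls $(B(x_{0},\rho_{i-1}),B(x_{0},\rho_{i}))$ (admissible since $\rho_{i}\leq R+r<R_{0}$) to select $\phi_{i}\in\text{cutoff}(B(x_{0},\rho_{i-1}),B(x_{0},\rho_{i}))$ satisfying
\[
\int u^{2}\,d\Gamma(\phi_{i})\leq C_{1}\int d\Gamma(u)+\frac{C_{2}n^{\beta}}{r^{\beta}}\int u^{2}d\mu .
\]
Set $\phi:=n^{-1}\sum_{i=1}^{n}\phi_{i}$. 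Then $\phi=1$ on $B_{0}$ and $\phi=0$ outside $B'$, so $\phi\in\text{cutoff}(B_{0},B')$; on the thin annulus $U_{k}:=B(x_{0},\rho_{k})\setminus\overline{B(x_{0},\rho_{k-1})}$, only $\phi_{k}$ is non-constant while $\phi_{j}\equiv 1$ for $j>k$ and $\phi_{j}\equiv 0$ for $j<k$, so $\phi(y)=(n-k+\phi_{k}(y))/n\in[(n-k)/n,(n-k+1)/n]$, and the same interval contains $\Phi(y)$. This yields (\ref{eq:almost_perfect}).

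Decompose $d\Gamma(\phi)=d\Gamma_{L}(\phi)+d\Gamma_{J}(\phi)$. For the local part, strong locality of $\Gamma_{L}$ forces $d\Gamma_{L}(\phi_{i},\phi_{j})=0$ when $i\neq j$, since in a neighbourhood of $\{\phi_{i}\text{ varies}\}$ every other $\phi_{j}$ is identically $0$ or $1$. Hence $\Gamma_{L}(\phi)=n^{-2}\sum_{i}\Gamma_{L}(\phi_{i})$, and summing the $n$ estimates for $\phi_{i}$ yields
\[
\int u^{2}\,d\Gamma_{L}(\phi)\leq \frac{C_{1}}{n}\int d\Gamma(u)+\frac{C_{2}n^{\beta-1}}{r^{\beta}}\int u^{2}d\mu .
\]

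The jump part is the main obstacle, because the naive bound $(\phi(x)-\phi(y))^{2}\leq n^{-1}\sum_{i}(\phi_{i}(x)-\phi_{i}(y))^{2}$ loses a power of $n$ in front of $\int d\Gamma(u)$. I remedy this by splitting $M\times M$ according to whether $d(x,y)\leq r/n$ or $d(x,y)>r/n$. For the short-range piece: since each $\phi_{i}$ is non-constant only on an annulus of width $r/n$, at most a bounded number of indices $i$ can satisfy $\phi_{i}(x)\neq\phi_{i}(y)$ once $d(x,y)\leq r/n$; Cauchy-Schwarz then sharpens the bound to $(\phi(x)-\phi(y))^{2}\leq Cn^{-2}\sum_{i}(\phi_{i}(x)-\phi_{i}(y))^{2}$, and summing reproduces the $C/n$ factor on $\int d\Gamma(u)$ just as in the local case. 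For the long-range piece I use the trivial bound $|\phi(x)-\phi(y)|\leq 1$ together with \ref{eq:J_less} and a dyadic annular decomposition based on \ref{eq:vol_l} to obtain $\int_{\{d(y,x)>r/n\}}J(x,y)\,d\mu(y)\leq C(n/r)^{\beta}$, which contributes only to the second term of (\ref{eq:CSA_strong}). Adding the local, short-range, and long-range bounds proves (\ref{eq:CSA_strong}) with universal constants $C_{3},C_{4}$.
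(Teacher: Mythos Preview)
Your construction of $\phi=n^{-1}\sum_i\phi_i$, the verification of (\ref{eq:almost_perfect}), and the treatment of the local part coincide with the paper's proof. The difference lies in the jump part. The paper expands
\[
(\phi(x)-\phi(y))^{2}=\frac{1}{n^{2}}\sum_{k,j}(\phi_k(x)-\phi_k(y))(\phi_j(x)-\phi_j(y))
\]
and splits by \emph{index distance}: adjacent pairs $j=k+1$ are absorbed into the diagonal by Cauchy--Schwarz, yielding the factor $3/n^{2}$ in front of $\sum_k(\phi_k(x)-\phi_k(y))^{2}$; separated pairs $j\ge k+2$ are rewritten via the identity $\phi_j\phi_k=\phi_k$ as $\phi_k(x)(1-\phi_j(y))+\phi_k(y)(1-\phi_j(x))$ and then controlled by the tail bound $\int_{B(x,r/n)^c}J(x,y)\,d\mu(y)\le C(n/r)^{\beta}$. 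Your split by \emph{spatial distance} $d(x,y)\le r/n$ versus $d(x,y)>r/n$ reaches the same conclusion more directly: the pigeonhole observation that at most two indices $i$ can have $\phi_i(x)\neq\phi_i(y)$ at a short-range pair replaces the index bookkeeping, and the crude bound $|\phi(x)-\phi(y)|\le 1$ on long-range pairs replaces the algebraic identity. Both routes invoke the same tail estimate (the paper's (\ref{60}), which is exactly your dyadic consequence of \ref{eq:vol_l} and \ref{eq:J_less}) and give the same final inequality up to absolute constants; your organization is arguably the cleaner of the two.
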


\begin{proof}
Fix positive integer $n$ and for integers $0\leq k\leq n$ set $r_{k}=kr/n$, $%
B_{k}:=B(x_{0},R+r_{k})$. Define%
\begin{equation*}
U_{k}:=B_{k}\setminus B_{k-1}(1\leq k\leq n).
\end{equation*}%
For a function $u\in \mathcal{F}\cap L^{\infty }$, we apply (\ref{eq:CSA})
to each pair $\left( B_{k-1},B_{k}\right) $ ($k\geq 1$) and obtain%
\begin{equation}
\int_{M}u^{2}d\Gamma (\phi _{k})\leq C_{1}\int_{M}d\Gamma (u)+\frac{C_{2}}{%
\left( r/n\right) ^{\beta }}\int_{M}u^{2}d\mu  \label{eq:CSA0}
\end{equation}%
for some $\phi _{k}\in $\textrm{cutoff}$(B_{k-1},B_{k})$.

We define 
\begin{equation*}
\phi =\phi _{n}:=\frac{1}{n}\sum_{k=1}^{n}\phi _{k}.
\end{equation*}%
Clearly, $\phi \in $\textrm{cutoff}$\left( B_{0},B^{\prime }\right) $ and
for each $1\leq k\leq n$,%
\begin{equation*}
\frac{n-k}{n}\leq \phi =\frac{\phi _{k}+\left( \phi _{k+1}+\cdots +\phi
_{n}\right) }{n}\leq \frac{n-k+1}{n}\text{ in }U_{k}.
\end{equation*}%
On the other hand, for any $y\in U_{k}$ we have $R+r_{k-1}\leq
d(x_{0},y)<R+r_{k}$, and by definition (\ref{eq:Phi_def}) of $\Phi $, 
\begin{equation*}
\frac{n-k}{n}=1-\frac{r_{k}}{r}\leq \Phi (y)\leq 1-\frac{r_{k-1}}{r}=\frac{%
n-k+1}{n}.
\end{equation*}%
Hence, we see that (\ref{eq:almost_perfect}) holds on each $U_{k}$. Both
functions $\phi $ and $\Phi $ take values $1$ in $B_{0},$ and $0$ outside $%
B^{\prime }$, and (\ref{eq:almost_perfect}) is also true in the set $%
B_{0}\cup \left( M\setminus B^{\prime }\right) $.

It remains to prove (\ref{eq:CSA_strong}) with such choice of $\phi $.

To do this, note that, using the fact that $1_{\Omega }d\Gamma
_{L}(u_{1},u_{2})=0$ for $u_{1},u_{2}\in \mathcal{F}$ if $u_{1}$ is constant
on $\Omega $ (cf. \cite[formula (3.2.26) p. 128]{fukushima2010dirichlet}),%
\begin{equation}
\int u^{2}d\Gamma _{L}(\phi )=\frac{1}{n^{2}}\overset{n}{\underset{k=1}{\sum 
}}\int u^{2}d\Gamma _{L}(\phi _{k}).  \label{01}
\end{equation}%
On the other hand, for any $x,y\in M$,%
\begin{align}
(\phi (x)-\phi & (y))^{2}=\frac{1}{n^{2}}\left( \overset{n}{\underset{k=1}{%
\sum }}\left( \phi _{k}(x)-\phi _{k}(y)\right) \right) ^{2}  \notag \\
& =\frac{1}{n^{2}}\left\{ \overset{n}{\underset{k=1}{\sum }}\left( \phi
_{k}(x)-\phi _{k}(y)\right) ^{2}+2\overset{n-1}{\underset{k=1}{\sum }}%
\overset{n}{\underset{j=k+1}{\sum }}(\phi _{k}(x)-\phi _{k}(y))(\phi
_{j}(x)-\phi _{j}(y))\right\} .  \label{02}
\end{align}%
The last double summation contains the following terms ($j=k+1$ and $1\leq
k\leq n-1$): 
\begin{align*}
2\overset{n-1}{\underset{k=1}{\sum }}(\phi _{k}(x)-\phi _{k}(y))(\phi
_{k+1}(x)-\phi _{k+1}(y))& \leq \overset{n-1}{\underset{k=1}{\sum }}(\phi
_{k}(x)-\phi _{k}(y))^{2}+\overset{n-1}{\underset{k=1}{\sum }}(\phi
_{k+1}(x)-\phi _{k+1}(y))^{2} \\
& \leq 2\overset{n}{\underset{k=1}{\sum }}(\phi _{k}(x)-\phi _{k}(y))^{2},
\end{align*}%
where we have used the Cauchy-Schwarz. From this, we obtain from (\ref{02})
that%
\begin{equation*}
\left( \phi (x)-\phi (y)\right) ^{2}\leq \frac{3}{n^{2}}\overset{n}{\underset%
{k=1}{\sum }}\left( \phi _{k}(x)-\phi _{k}(y)\right) ^{2}+\frac{2}{n^{2}}%
\overset{n-2}{\underset{k=1}{\sum }}\overset{n}{\underset{j=k+2}{\sum }}%
(\phi _{k}(x)-\phi _{k}(y))(\phi _{j}(x)-\phi _{j}(y)).
\end{equation*}%
Multiplying by $u^{2}(x)J(x,y)$ then integrating over $M\times M$ on both
sides, we obtain that%
\begin{align}
\int_{M\times M}& u^{2}(x)(\phi (x)-\phi (y))^{2}J(x,y)d\mu (y)d\mu (x) 
\notag \\
\leq & \frac{3}{n^{2}}\overset{n}{\underset{k=1}{\sum }}\int_{M\times
M}u^{2}(x)\left( \phi _{k}(x)-\phi _{k}(y)\right) ^{2}J(x,y)d\mu (y)d\mu (x)
\notag \\
& +\frac{2}{n^{2}}\overset{n-2}{\underset{k=1}{\sum }}\overset{n}{\underset{%
j=k+2}{\sum }}\int_{M\times M}u^{2}(x)(\phi _{k}(x)-\phi _{k}(y))(\phi
_{j}(x)-\phi _{j}(y))J(x,y)d\mu (y)d\mu (x).  \label{03}
\end{align}%
Noting that for any $1\leq k\leq n-2$ and any $k+2\leq j\leq n$ 
\begin{equation*}
\phi _{j}\phi _{k}=\phi _{k}\text{ in }M,
\end{equation*}%
we have that for any $x,y\in M$ 
\begin{align*}
(\phi _{k}(x)-\phi _{k}(y))(\phi _{j}(x)-\phi _{j}(y))& =\phi _{k}(x)-\phi
_{k}(x)\phi _{j}(y)-\phi _{k}(y)\phi _{j}(x)+\phi _{k}(y) \\
& =\phi _{k}(x)(1-\phi _{j}(y))+\phi _{k}(y)(1-\phi _{j}(x)).
\end{align*}%
Plugging this into (\ref{03}) and then summing up with (\ref{01}), we obtain
that 
\begin{align}
\int u^{2}d\Gamma (\phi )=& \int u^{2}d\Gamma _{L}(\phi )+\int_{M\times
M}u^{2}(x)(\phi (x)-\phi (y))^{2}J(x,y)d\mu (y)d\mu (x)  \notag \\
\leq & \overset{n}{\underset{k=1}{\sum }}\left\{ \frac{1}{n^{2}}\int
u^{2}d\Gamma _{L}(\phi _{k})+\frac{3}{n^{2}}\int_{M\times M}u^{2}(x)\left(
\phi _{k}(x)-\phi _{k}(y)\right) ^{2}J(x,y)d\mu (y)d\mu (x)\right\}  \notag
\\
& +\frac{2}{n^{2}}\overset{n-2}{\underset{k=1}{\sum }}\overset{n}{\underset{%
j=k+2}{\sum }}\int_{M\times M}u^{2}(x)\phi _{k}(x)(1-\phi _{j}(y))J(x,y)d\mu
(y)d\mu (x)  \notag \\
& +\frac{2}{n^{2}}\overset{n-2}{\underset{k=1}{\sum }}\overset{n}{\underset{%
j=k+2}{\sum }}\int_{M\times M}u^{2}(x)\phi _{k}(y)(1-\phi _{j}(x))J(x,y)d\mu
(y)d\mu (x)  \notag \\
:=& I_{1}+2I_{2}+2I_{3}.  \label{04}
\end{align}%
We will estimate $I_{1}$, $I_{2}$, $I_{3}$ separately.

For the term $I_{1}$, we apply (\ref{eq:CSA0}) to obtain%
\begin{align}
I_{1}& \leq \frac{3}{n^{2}}\overset{n}{\underset{k=1}{\sum }}\int
u^{2}d\Gamma (\phi _{k})\leq \frac{3}{n^{2}}\overset{n}{\underset{k=1}{\sum }%
}\left\{ C_{1}\int d\Gamma (u)+\frac{C_{2}}{\left( r/n\right) ^{\beta }}\int
u^{2}d\mu \right\}  \notag \\
& \leq \frac{3C_{1}}{n}\int d\Gamma (u)+\frac{3C_{2}n^{\beta -1}}{r^{\beta }}%
\int u^{2}d\mu .  \label{06}
\end{align}

For the term $I_{2}$, observe that for any $1\leq k\leq n-2$ and $k+2\leq
j\leq n$,%
\begin{equation}
\mathrm{dist}\left( \text{\textrm{supp}}(\phi _{k}),\text{\textrm{supp}}%
(1-\phi _{j})\right) \geq \mathrm{dist}\left( B_{k},B_{j-1}^{c}\right) \geq
r/n.  \label{dist}
\end{equation}%
Recall that by \ref{eq:J_less}, \ref{eq:vol_l}, we have that 
\begin{equation}
\int_{B(x,\rho )^{c}}J(x,y)d\mu (y)\leq C\rho ^{-\beta }  \label{60}
\end{equation}%
for $\mu $-almost all $x\in M$ (cf. \cite[Proof of Proposition 4.7]%
{grhulau2014nonlocal}). From this and using (\ref{dist}), we have%
\begin{align*}
\int_{M\times M}u^{2}(x)\phi _{k}(x)(1-\phi _{j}(y))& J(x,y)d\mu (y)d\mu (x)
\\
=& \int_{B_{k}\times B_{j-1}^{c}}u^{2}(x)\phi _{k}(x)(1-\phi
_{j}(y))J(x,y)d\mu (y)d\mu (x) \\
\leq & \int_{B_{k}\times B_{j-1}^{c}}u^{2}(x)J(x,y)d\mu (y)d\mu (x) \\
\leq & \frac{C^{\prime }}{\left( r/n\right) ^{\beta }}\int_{B_{k}}u^{2}(x)d%
\mu (x).
\end{align*}%
Hence,%
\begin{equation}
I_{2}\leq \frac{1}{n^{2}}\overset{n-2}{\underset{k=1}{\sum }}\overset{n}{%
\underset{j=k+2}{\sum }}\frac{C^{\prime }}{\left( r/n\right) ^{\beta }}%
\int_{B_{k}}u^{2}(x)d\mu (x)\leq \frac{C^{\prime }n^{\beta }}{r^{\beta }}%
\int u^{2}d\mu .  \label{07}
\end{equation}%
Similarly, we have that, using (\ref{dist}), (\ref{60}),%
\begin{align*}
\int_{M\times M}u^{2}(x)\phi _{k}(y)(1-\phi _{j}(x))J(x,y)d\mu (y)d\mu (x)&
\leq \int_{B_{j-1}^{c}\times B_{k}}u^{2}(x)J(x,y)d\mu (y)d\mu (x) \\
& \leq \frac{C^{\prime }}{\left( r/n\right) ^{\beta }}%
\int_{B_{j-1}^{c}}u^{2}(x)d\mu (x),
\end{align*}%
which gives that%
\begin{equation}
I_{3}\leq \frac{1}{n^{2}}\overset{n-2}{\underset{k=1}{\sum }}\overset{n}{%
\underset{j=k+2}{\sum }}\frac{C^{\prime }}{\left( r/n\right) ^{\beta }}%
\int_{B_{j-1}^{c}}u^{2}(x)d\mu (x)\leq \frac{C^{\prime }n^{\beta }}{r^{\beta
}}\int u^{2}d\mu .  \label{08}
\end{equation}%
Therefore, plugging (\ref{06}), (\ref{07}) and (\ref{08}) into (\ref{04}),
we conclude that%
\begin{equation*}
\int u^{2}d\Gamma (\phi )\leq \frac{3C_{1}}{n}\int d\Gamma (u)+\frac{\left(
3C_{2}+4C^{\prime }\right) n^{\beta }}{r^{\beta }}\int u^{2}d\mu ,
\end{equation*}%
thus proving (\ref{eq:CSA_strong}). The proof is complete.
\end{proof}

\begin{remark}
\RM A self-improvement of condition $\left( AB\right) $ for the jump type
(nonlocal) Dirichlet form is addressed in \cite[Lemma 2.9]{ghh16HE} by using
a much more complicated cutoff function constructed in \cite[Lemma 5.1]%
{andres2013energy}. The reason is that in \cite[Lemma 2.9]{ghh16HE} one
needs to consider the integral $\int \phi ^{2}d\Gamma (u)$- a smaller
integral involving the function $\phi ^{2}$, incurring a certain amount of
troubles. See also \cite[the proof of Proposition 2.4]{ckw16} for a
self-improvement of condition CSJ$\left( \phi \right) $.
\end{remark}

\begin{remark}
\RM If the Dirichlet form is \emph{strongly local}, the following sharper
self-improvement than (\ref{eq:CSA_strong}) was proved in \cite[Lemma 2.1]%
{murugan2015davies}: setting $U=B_{R+r}\setminus B_{R}$, 
\begin{equation*}
\int_{U}u^{2}d\Gamma (\phi )\leq \frac{C_{3}}{n}\int_{U}d\Gamma (u)+\frac{%
C_{4}n^{\beta /2\,-1}}{r^{\beta }}\int_{U}u^{2}d\mu ,
\end{equation*}%
which contains the better factor $n^{\beta /2\,-1}$ (instead of $n^{\beta }$
in (\ref{eq:CSA_strong}) above), but starting from a stronger assumption ($%
CSA$) introduced in \cite{andres2013energy}. For the non-local Dirichlet
form, this factor does not play a role as we will see in the proof of
Theorem \ref{thm:Mr2} below.
\end{remark}

\section[Heat kernel]{Off-diagonal upper bound}

\label{sec:HK}

In this section, we prove Theorem \ref{thm:Mr2} and Theorem \ref{T2}. To
prove Theorem \ref{thm:Mr2}, we need to obtain upper estimate of the heat
kernel $q_{t}^{(\rho )}(x,y)$ associated with the \emph{truncated} Dirichlet
form $(\mathcal{E}_{\rho },\mathcal{F}_{\rho })$ defined by (\ref{eq:DF_q})
for any $0<\rho <\infty $. This can be done by carrying out Davies'
perturbation method. Note that the form $(\mathcal{E},\mathcal{F})$ or $(%
\mathcal{E}_{\rho },\mathcal{F}_{\rho })$ may not be conservative at this
stage.

For any regular Dirichlet form $(\mathcal{E},\mathcal{F})$, recall the
following identity (cf. \cite[formula (4.5.7), p.181]{fukushima2010dirichlet}%
): for any $u,v\in \mathcal{F}$,\textbf{\ }%
\begin{align}
\mathcal{E}(u,v)=\underset{t\rightarrow 0+}{\lim }\mathcal{E}^{(t)}(u,v):=%
\underset{t\rightarrow 0+}{\lim }& \Bigg\{\frac{1}{2t}\int_{M\times
M}(u(x)-u(y))(v(x)-v(y))P_{t}(x,dy)d\mu (x)  \notag \\
& +\frac{1}{t}\int_{M}uv(1-P_{t}1)d\mu \Bigg\},  \label{eq:t_fml}
\end{align}%
where $P_{t}(x,dy)$ is the transition function. Using this, we have that for
any three measurable functions $u,v,w$ such that all functions $u,vw,uv,w$
belong to $\mathcal{F}$, 
\begin{align}
\mathcal{E}(u,vw)=& \mathcal{E}(uv,w)  \notag \\
& +\underset{t\rightarrow 0+}{\lim }\frac{1}{2t}\int_{M\times
M}(u(x)w(y)-u(y)w(x))(v(x)-v(y))P_{t}(x,dy)d\mu (x),  \label{eq:tr_use}
\end{align}%
and that, using (\ref{eq:g_fml}), 
\begin{equation}
\int ud\Gamma (v)=\underset{t\rightarrow 0+}{\lim }\int ud\Gamma ^{(t)}(v)
\label{eq:tr_fml}
\end{equation}%
for any $u,v\in \mathcal{F}\cap L^{\infty },$ where%
\begin{equation*}
\int ud\Gamma ^{(t)}(v):=\frac{1}{2t}\left\{ \int_{M\times
M}u(x)(v(x)-v(y))^{2}P_{t}(x,dy)d\mu (x)+\int_{M}uv^{2}(1-P_{t}1)d\mu
\right\} .
\end{equation*}

For any function $f$ and any number $\rho >0$, set%
\begin{equation}
\text{\textrm{osc}}(f,\rho ):=\underset{d(x,y)\leq \rho }{\underset{x,y\in M}%
{\sup }}\left\vert f(y)-f(x)\right\vert .  \label{eq:osc}
\end{equation}%
The following lemma is motivated by \cite[Theorem 3.9]{carlen1987upper}, 
\cite[Lemma 3.4]{murugan2015heat}.

\begin{lemma}
\label{lem:DV_1} Let $(\mathcal{E}_{\rho },\mathcal{F}_{\rho })$ be a
regular Dirichlet form defined in (\ref{eq:DF_q}) for $0<\rho <\infty $ with
energy measure $d\Gamma _{\rho }(\cdot )$. Then 
\begin{equation}
\mathcal{E}_{\rho }(e^{-\psi }f,e^{\psi }f^{2p-1})\geq \frac{1}{2p}\mathcal{E%
}_{\rho }(f^{p})-9p\Lambda _{\psi }\int_{M}f^{2p}d\Gamma _{\rho }(\psi )
\label{eq:Dv0}
\end{equation}%
for any $\psi \in \mathcal{F}\cap L^{\infty }$, any non-negative $f\in 
\mathcal{F}\cap L^{\infty }$ and any $p\geq 1$, where%
\begin{equation}
\Lambda _{\psi }=%
\begin{cases}
1, & J\equiv 0, \\ 
e^{2\text{\textrm{osc}}(\psi ,\rho )}, & J\neq 0.%
\end{cases}
\label{43}
\end{equation}
\end{lemma}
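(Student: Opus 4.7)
The plan is to split $\mathcal{E}_\rho=\mathcal{E}^{(L)}+\mathcal{E}_\rho^{(J)}$ into its strongly local and truncated jump parts, and prove the stated lower bound on each piece separately. The local piece gives the contribution with $\Lambda_\psi=1$, which on its own covers the case $J\equiv 0$. The jump piece is integrated over $\{d(x,y)<\rho\}$, where $|\psi(x)-\psi(y)|\le \mathrm{osc}(\psi,\rho)$; this is exactly where the factor $e^{2\,\mathrm{osc}(\psi,\rho)}$ will appear.

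For the local part I would apply the Leibniz and chain rules for the strongly local energy measure (cf.\ \cite[Lemma 3.2.5 and Theorem 3.2.2]{fukushima2010dirichlet}) to get
\[
d\Gamma_L(e^{-\psi}f,\,e^{\psi}f^{2p-1})=\tfrac{2p-1}{p^{2}}\,d\Gamma_L(f^{p})-2(p-1)f^{2p-1}d\Gamma_L(f,\psi)-f^{2p}d\Gamma_L(\psi),
\]
using the chain rule $f^{2p-2}d\Gamma_L(f)=p^{-2}d\Gamma_L(f^{p})$. For $p=1$ the cross term vanishes and the bound is immediate. For $p>1$ I bound the cross term by the pointwise Cauchy--Schwarz inequality for signed energy measures, $|f^{2p-1}d\Gamma_L(f,\psi)|\le p^{-1}f^{p}\sqrt{d\Gamma_L(f^{p})}\sqrt{d\Gamma_L(\psi)}$, followed by a weighted splitting with parameter $\varepsilon=(3p-2)/(2p(p-1))$ tuned so that the $d\Gamma_L(f^{p})$-coefficient comes out to be exactly $1/(2p)$; the remaining $f^{2p}d\Gamma_L(\psi)$-coefficient is $1+2(p-1)^{2}/(3p-2)\le 9p$.

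For the jump part I work pointwise with the integrand $F(x,y)=(e^{-\psi(x)}f(x)-e^{-\psi(y)}f(y))(e^{\psi(x)}f^{2p-1}(x)-e^{\psi(y)}f^{2p-1}(y))$. Writing $a=f(x)$, $b=f(y)$, $s=\psi(x)-\psi(y)$ and expanding,
\[
F=(a-b)(a^{2p-1}-b^{2p-1})-(e^{s}-1)ab^{2p-1}-(e^{-s}-1)a^{2p-1}b.
\]
The main term is bounded below by $p^{-1}(a^{p}-b^{p})^{2}$ via the Cauchy--Schwarz identity $(a^{p}-b^{p})^{2}=\bigl(\int_{b}^{a}pc^{p-1}dc\bigr)^{2}\le p(a-b)(a^{2p-1}-b^{2p-1})$. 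For the exponential remainder I use $e^{\pm s}-1=\pm s+(e^{\pm s}-1\mp s)$: the non-negative quadratic pieces together contribute at most $\tfrac{s^{2}}{2}e^{|s|}(ab^{2p-1}+a^{2p-1}b)\le \tfrac{s^{2}}{2}e^{|s|}(a^{2p}+b^{2p})$ by the Young inequality $ab^{2p-1}+a^{2p-1}b\le a^{2p}+b^{2p}$, while the linear piece $-s\,ab(a^{2p-2}-b^{2p-2})$ is absorbed by a Cauchy--Schwarz splitting with parameter of order $1/p$, using the key algebraic fact $[ab(a^{2p-2}-b^{2p-2})]^{2}\le C(a^{p}-b^{p})^{2}(a^{2p}+b^{2p})$ with $C$ independent of $p$ (verified by reducing to the one-variable function $t^{2}(1-t^{2p-2})^{2}/[(1-t^{p})^{2}(1+t^{2p})]$ on $[0,1]$). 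This yields $F(x,y)\ge (2p)^{-1}(a^{p}-b^{p})^{2}-C'p\,e^{|s|}(a^{2p}+b^{2p})s^{2}$; since $|s|\le \mathrm{osc}(\psi,\rho)$ on the truncated region, $e^{|s|}\le \Lambda_\psi^{1/2}$, and integration against $J(x,y)d\mu(y)d\mu(x)$ followed by the $x\leftrightarrow y$ symmetrisation (which turns $(a^{2p}+b^{2p})$ into $2f^{2p}(x)$) produces the jump part with prefactor $\le 9p\Lambda_\psi$. The main obstacle is precisely this constant bookkeeping: both the local-part Cauchy--Schwarz and the jump-part absorption of the odd-in-$s$ term have to be tight enough that the recombined coefficient in front of $\int f^{2p}d\Gamma_\rho(\psi)$ stays $\le 9p\Lambda_\psi$ uniformly in $p\ge 1$ and in the oscillation of $\psi$.
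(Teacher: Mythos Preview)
Your approach is correct and genuinely different from the paper's. The paper does \emph{not} split $\mathcal{E}_\rho$ into local and jump pieces; instead it uses the approximating-form identity~(\ref{eq:tr_use}) with $u=e^{-\psi}f$, $v=e^{\psi}$, $w=f^{2p-1}$ to write
\[
\mathcal{E}_\rho(e^{-\psi}f,e^{\psi}f^{2p-1})=\mathcal{E}_\rho(f,f^{2p-1})+I_2,
\]
invokes the Carlen--Kusuoka--Stroock bound $\mathcal{E}_\rho(f,f^{2p-1})\ge\frac{2p-1}{p^2}\mathcal{E}_\rho(f^p)$ for the first term, and then controls $I_2$ by \emph{integrated} Cauchy--Schwarz together with the key estimate $\int f^{2p}e^{\pm 2\psi}d\Gamma_\rho(e^{\mp\psi}-1)\le\Lambda_\psi\int f^{2p}d\Gamma_\rho(\psi)$, which is where the chain rule (local part) and the inequality $(e^a-1)^2\le e^{2|a|}a^2$ (jump part) enter simultaneously. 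Your route is more ``pointwise'': chain/Leibniz rules directly on $d\Gamma_L$, and explicit algebra on the jump integrand. Two remarks on your write-up: (i) your expansion of $F$ has the $e^{s}$ and $e^{-s}$ attached to the wrong monomials (it should read $-(e^{-s}-1)ab^{2p-1}-(e^{s}-1)a^{2p-1}b$ with $s=\psi(x)-\psi(y)$), but since everything downstream depends only on $|s|$ this is harmless; (ii) your $p$-uniform algebraic claim $[ab(a^{2p-2}-b^{2p-2})]^{2}\le C(a^{p}-b^{p})^{2}(a^{2p}+b^{2p})$ does hold with $C=4$, via the concavity bound $(1-t^{2p-2})/(1-t^{p})\le(2p-2)/p\le 2$ for $t\in(0,1)$. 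With that, your jump coefficient is $\le 5p\,\Lambda_\psi^{1/2}$ and your local coefficient is $1+\tfrac{2(p-1)^2}{3p-2}\le 3p$, both comfortably below $9p\Lambda_\psi$, so the bookkeeping closes. The paper's argument has the advantage of treating local and nonlocal parts in one stroke through the transition kernel $Q_t(x,dy)$; yours is more elementary in that it avoids the approximating form and the cited Carlen--Kusuoka--Stroock inequalities, at the price of the extra algebraic lemma.
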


\begin{proof}
We note that $e^{\psi }-1\in \mathcal{F}\cap L^{\infty }$ by using (\ref%
{eq:t_fml}) and the elementary inequality 
\begin{equation}
(e^{a}-1)^{2}\leq e^{2|a|}a^{2}  \label{eq:ele_a}
\end{equation}%
for all $a\in \mathbb{R}$. It follows that both functions $e^{\psi }g$ and $%
e^{-\psi }g$ belong to $\mathcal{F}\cap L^{\infty }$ if $g\in \mathcal{F}%
\cap L^{\infty }$. By symmetry $Q_{t}(x,dy)d\mu (x)=Q_{t}(y,dx)d\mu (y)$ for
the transition function $Q_{t}(x,dy)$ associated with the form $(\mathcal{E}%
_{\rho },\mathcal{F}_{\rho })$, 
\begin{equation*}
m_{t}(dx,dy):=\frac{1}{2t}Q_{t}(x,dy)d\mu (x)=\frac{1}{2t}Q_{t}(y,dx)d\mu
(y).
\end{equation*}%
Applying (\ref{eq:tr_use}) with $u=e^{-\psi }f$, $v=e^{\psi }$, $w=f^{2p-1}$
and $\mathcal{E}$ being replaced by $\mathcal{E}_{\rho }$, we have%
\begin{eqnarray}
\mathcal{E}_{\rho }(e^{-\psi }f,e^{\psi }f^{2p-1}) &=&\mathcal{E}_{\rho
}(f,f^{2p-1})  \notag \\
&&+\underset{t\downarrow 0+}{\lim }\Bigg\{\int_{M\times M}\left[ \left(
e^{-\psi }f\right) (x)f^{2p-1}(y)-\left( e^{-\psi }f\right) (y)f^{2p-1}(x)%
\right]  \notag \\
&&\qquad \qquad \times \left( e^{\psi (x)}-e^{\psi (y)}\right) m_{t}(dx,dy)%
\Bigg\}:=I_{1}+I_{2}.  \label{eq:Dv-2}
\end{eqnarray}%
For $I_{1}$, we have%
\begin{equation}
I_{1}=\mathcal{E}_{\rho }(f,f^{2p-1})\geq \frac{2p-1}{p^{2}}\mathcal{E}%
_{\rho }(f^{p})  \label{st0}
\end{equation}%
(cf. \cite[formulae (3.17)]{carlen1987upper} that is valid for any regular
Dirichlet form by using (\ref{eq:t_fml})).

To estimate $I_{2}$, note that 
\begin{align}
I_{2}=\underset{t\downarrow 0+}{\lim }& \left\{ \int_{M\times M}\left(
f^{2p}(y)-f^{2p}(x)\right) e^{-\psi (x)}\left( e^{\psi (x)}-e^{\psi
(y)}\right) m_{t}(dx,dy)\right.  \notag \\
& \;+\int_{M\times M}f^{2p}(x)\left( e^{-\psi (x)}-e^{-\psi (y)}\right)
\left( e^{\psi (x)}-e^{\psi (y)}\right) m_{t}(dx,dy)  \notag \\
& \left. \;+2\int_{M\times M}f^{2p-1}(y)\left( f(x)-f(y)\right) e^{\psi
(y)}\left( e^{-\psi (y)}-e^{-\psi (x)}\right) m_{t}(dx,dy)\right\} .
\label{I2}
\end{align}%
From this and using the Cauchy-Schwarz and the following elementary
inequalities%
\begin{equation*}
\int f^{2p-2}d\Gamma _{\rho }(f)\leq \mathcal{E}_{\rho }(f^{2p-1},f)\leq 
\mathcal{E}_{\rho }(f^{p})
\end{equation*}%
(see \cite[formulas (3.16), (3.17)]{carlen1987upper}), we have 
\begin{align}
I_{2}\geq & -\sqrt{\mathcal{E}_{\rho }(f^{p})}\left( \sqrt{\int
f^{2p}e^{2\psi }d\Gamma _{\rho }(e^{-\psi }-1)}+\sqrt{\int f^{2p}e^{-2\psi
}d\Gamma _{\rho }(e^{\psi }-1)}\right)  \notag \\
& -\left( \int f^{2p}e^{2\psi }d\Gamma _{\rho }(e^{-\psi }-1)\cdot \int
f^{2p}e^{-2\psi }d\Gamma _{\rho }(e^{\psi }-1)\right) ^{1/2}  \notag \\
& -2\left( \mathcal{E}_{\rho }(f^{p})\int f^{2p}e^{2\psi }d\Gamma _{\rho
}(e^{-\psi }-1)\right) ^{1/2}.  \label{eq:I2}
\end{align}%
We further estimate $I_{2}$ by using the following fact:%
\begin{equation}
\max \left\{ \int f^{2p}e^{2\psi }d\Gamma _{\rho }(e^{-\psi }-1),\int
f^{2p}e^{-2\psi }d\Gamma _{\rho }(e^{\psi }-1)\right\} \leq \Lambda _{\psi
}\int f^{2p}d\Gamma _{\rho }(\psi ),  \label{eq:os_es}
\end{equation}%
and indeed, this fact can be proved by noting that 
\begin{equation*}
e^{-2\psi }d\Gamma _{L}(e^{\psi }-1)=d\Gamma _{L}(\psi )=e^{2\psi }d\Gamma
_{L}(e^{-\psi }-1)
\end{equation*}%
from the chain rule for the energy measure $d\Gamma _{L}(\cdot )$ (cf. \cite[%
Theorem 3.2.2]{fukushima2010dirichlet}), and that%
\begin{equation*}
e^{2\psi (x)}(e^{-\psi (x)}-e^{-\psi (y)})^{2}\leq e^{2\text{\textrm{osc}}%
(\psi ,\rho )}\left\vert \psi (x)-\psi (y)\right\vert ^{2}
\end{equation*}%
for any $x,y$ with $d(x,y)\leq \rho $.

Then, plugging (\ref{eq:os_es}) into (\ref{eq:I2}) and then using the
elementary inequality $4ab\leq \frac{a^{2}}{2p}+8pb^{2}$ for $a,b>0$, we
obtain 
\begin{equation}
I_{2}\geq -\frac{1}{2p}\mathcal{E}_{\rho }(f^{p})-9p\Lambda _{\psi }\int
f^{2p}d\Gamma _{\rho }(\psi ).  \label{eq:I2_e}
\end{equation}%
Finally, plugging (\ref{st0}), (\ref{eq:I2_e}) into (\ref{eq:Dv-2}), we
conclude that, using $\frac{2p-1}{p^{2}}\geq \frac{1}{p}$,%
\begin{equation*}
\mathcal{E}_{\rho }(e^{-\psi }f,e^{\psi }f^{2p-1})\geq \frac{1}{2p}\mathcal{E%
}_{\rho }(f^{p})-9p\Lambda _{\psi }\int f^{2p}d\Gamma _{\rho }(\psi ),
\end{equation*}%
thus proving (\ref{eq:Dv0}), as desired.
\end{proof}

We estimate the last term in (\ref{eq:Dv0}) by using the cutoff inequality
developed in Section \ref{sec:CSA}. For $0<\eta <1$, set 
\begin{equation}
c_{1}(\eta )=%
\begin{cases}
0, & J\equiv 0, \\ 
2(\beta +1)\left( \eta +2\eta ^{2}\right) , & J\neq 0.%
\end{cases}
\label{eq:c1_Def}
\end{equation}

\begin{lemma}
\label{lem:Dv_1} Let $B_{R},B_{R+r}$ be two concentric balls in $M$ with $%
0<R<R+r<R_{0}$ and let $(\mathcal{E}_{\rho },\mathcal{F}_{\rho })$ be the
truncated Dirichlet form defined in (\ref{eq:DF_q}) with $0<\rho <r$. Assume
all the conditions \ref{eq:J_less}, \ref{eq:vol_l} and (\ref{eq:CSA}) are
satisfied by the form $(\mathcal{E},\mathcal{F})$. Then for any $p\geq 1$
and any $\lambda \geq \eta ^{-1}$ with $\eta :=\rho /r$, and for any
non-negative $f\in \mathcal{F}\cap L^{\infty }$, there exists some function $%
\phi =\phi _{p,\lambda }\in $\textrm{cutoff}$(B_{R},B_{R+r})$ (independent
of $f$) such that%
\begin{equation}
\mathcal{E}_{\rho }(e^{-\lambda \phi }f,e^{\lambda \phi }f^{2p-1})\geq \frac{%
1}{4p}\mathcal{E}(f^{p})-TC_{0}p^{2\beta +1}\lambda ^{2\beta +2}\int
f^{2p}d\mu ,  \label{eq:R}
\end{equation}%
and that%
\begin{equation}
\left\Vert \phi -\Phi \right\Vert _{\infty }\leq \frac{1}{\left( 6\lambda
p\right) ^{2}}<\frac{1}{\lambda p},  \label{eq:perf_func}
\end{equation}%
where $C_{0}$ is some universal constant independent of $B_{R},B_{R+r},\rho
,p,\lambda ,R_{0}$ and functions $\phi ,f$, and where $\Phi $ is given by (%
\ref{eq:Phi_def}), and%
\begin{equation}
T=%
\begin{cases}
1/r^{\beta }, & J\equiv 0, \\ 
e^{c_{1}(\eta )\lambda }/\rho ^{\beta }, & J\neq 0,%
\end{cases}
\label{tt}
\end{equation}%
with $c_{1}(\eta )$ given by (\ref{eq:c1_Def}).
\end{lemma}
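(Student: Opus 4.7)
\textbf{Proof plan for Lemma \ref{lem:Dv_1}.}
The plan is to combine Lemma \ref{lem:DV_1} with the self-improved cutoff inequality from Proposition \ref{prop:sCSA}, and to balance the choice of the discretization parameter $n$ against the oscillation factor $\Lambda_{\lambda\phi}$. First I would apply Lemma \ref{lem:DV_1} with $\psi := \lambda \phi$, where $\phi \in \mathrm{cutoff}(B_R,B_{R+r})$ will be produced by Proposition \ref{prop:sCSA} for a suitable positive integer $n$. Using the chain rule $d\Gamma_\rho(\lambda\phi)=\lambda^2 d\Gamma_\rho(\phi)$ together with $d\Gamma_\rho(\phi)\le d\Gamma(\phi)$, this yields
$$
\mathcal{E}_\rho(e^{-\lambda\phi}f,e^{\lambda\phi}f^{2p-1}) \;\ge\; \frac{1}{2p}\mathcal{E}_\rho(f^p) - 9p\,\Lambda_{\lambda\phi}\lambda^2 \int f^{2p}\,d\Gamma(\phi).
$$
Then I invoke Proposition \ref{prop:sCSA} with $u=f^p$ to bound the last integral by $\tfrac{C_3}{n}\mathcal{E}(f^p)+\tfrac{C_4 n^\beta}{r^\beta}\int f^{2p}\,d\mu$.

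Next, to replace $\mathcal{E}_\rho(f^p)$ by $\mathcal{E}(f^p)$, I would use the standard tail estimate (\ref{60}), namely $\int_{B(x,\rho)^c} J(x,y)\,d\mu(y)\le C\rho^{-\beta}$, which holds under \ref{eq:J_less} and \ref{eq:vol_l}. Applying $(a-b)^2\le 2(a^2+b^2)$ and symmetry of $J$ yields
$$
\mathcal{E}(f^p)-\mathcal{E}_\rho(f^p)\;\le\;4C\rho^{-\beta}\int f^{2p}\,d\mu,
$$
so that $\tfrac{1}{2p}\mathcal{E}_\rho(f^p)\ge\tfrac{1}{2p}\mathcal{E}(f^p)-\tfrac{2C}{p\rho^\beta}\int f^{2p}\,d\mu$. (In the local case $J\equiv 0$ this step is trivial since $\mathcal{E}_\rho=\mathcal{E}$.) Collecting everything,
$$
\mathcal{E}_\rho(e^{-\lambda\phi}f,e^{\lambda\phi}f^{2p-1})\;\ge\;\Bigl[\tfrac{1}{2p}-\tfrac{9p\Lambda_{\lambda\phi}\lambda^2 C_3}{n}\Bigr]\mathcal{E}(f^p)\;-\;\Bigl[\tfrac{2C}{p\rho^\beta}+\tfrac{9p\Lambda_{\lambda\phi}\lambda^2 C_4 n^\beta}{r^\beta}\Bigr]\int f^{2p}\,d\mu.
$$

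The crux is to choose $n$ so that the first bracket is $\ge \tfrac{1}{4p}$, equivalently $n\ge 36p^2\lambda^2 C_3 \Lambda_{\lambda\phi}$, while simultaneously controlling $\Lambda_{\lambda\phi}$. For this, I would exploit the fact that the reference function $\Phi$ in (\ref{eq:Phi_def}) is $1/r$-Lipschitz, so $\mathrm{osc}(\Phi,\rho)\le \rho/r=\eta$. Combined with the $L^\infty$-closeness $\|\phi-\Phi\|_\infty\le 1/n$ from (\ref{eq:almost_perfect}), this gives
$$
\Lambda_{\lambda\phi}\;=\;e^{2\,\mathrm{osc}(\lambda\phi,\rho)}\;\le\;e^{2\lambda(\eta+2/n)}.
$$
If $n\ge 1/\eta^2$ (which is automatic since $\lambda\ge\eta^{-1}$ forces $n\ge 36p^2\lambda^2\ge 36/\eta^2$), then $\Lambda_{\lambda\phi}\le e^{2\lambda(\eta+2\eta^2)}$, so $\Lambda_{\lambda\phi}^{\beta+1}\le e^{c_1(\eta)\lambda}$ by the definition (\ref{eq:c1_Def}) of $c_1(\eta)$. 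To resolve the circular dependence $n\leftrightarrow\Lambda_{\lambda\phi}$, I would simply fix $n=\lceil 36p^2\lambda^2 C_3\, e^{2\lambda(\eta+2\eta^2)}\rceil$ \emph{a priori}; this choice gives $n\ge 36p^2\lambda^2 C_3\Lambda_{\lambda\phi}$ by monotonicity and guarantees $1/n\le 1/(6\lambda p)^2$, yielding (\ref{eq:perf_func}).

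Plugging this $n$ into the error bracket, one gets $\tfrac{9 p \Lambda_{\lambda\phi}\lambda^2 C_4 n^\beta}{r^\beta}\lesssim p^{2\beta+1}\lambda^{2\beta+2}\Lambda_{\lambda\phi}^{\beta+1}/r^\beta\le C_0'\, p^{2\beta+1}\lambda^{2\beta+2}\,e^{c_1(\eta)\lambda}/\rho^\beta$, where we used $1/r^\beta\le 1/\rho^\beta$; the remaining $\tfrac{2C}{p\rho^\beta}$ is absorbed into the same quantity since $T\ge 1/\rho^\beta$. This gives exactly the claimed bound (\ref{eq:R}), with $T$ as in (\ref{tt}). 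In the local case the $\Lambda$ factor is $1$, the tail correction disappears, $1/r^\beta$ survives, and the same choice of $n$ (now independent of any exponential factor) works verbatim. The main obstacle is the joint calibration of $n$ against the exponential $\Lambda_{\lambda\phi}$: the specific form $c_1(\eta)=2(\beta+1)(\eta+2\eta^2)$ is engineered so that the $(\beta+1)$-th power of the $L^\infty$-close-to-Lipschitz oscillation $\eta+2/n$ lands precisely inside $e^{c_1(\eta)\lambda}$, provided $n\ge \eta^{-2}$, which is guaranteed for free by $\lambda\ge\eta^{-1}$.
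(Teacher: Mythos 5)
Your proof is correct and follows essentially the same route as the paper's: apply Lemma~\ref{lem:DV_1} with $\psi=\lambda\phi$, replace $\mathcal{E}_\rho(f^p)$ by $\mathcal{E}(f^p)$ via the tail estimate~(\ref{60}), invoke Proposition~\ref{prop:sCSA}, control $\mathrm{osc}(\phi,\rho)$ by $\mathrm{osc}(\Phi,\rho)+2\|\phi-\Phi\|_\infty\le\eta+2/n$, and then fix $n$ a priori so that the $\Lambda$-circularity is broken, using $\lambda\ge\eta^{-1}$ to guarantee $n$ is large enough. The only (cosmetic) difference is that the paper feeds $n^2$ into Proposition~\ref{prop:sCSA} and then sets $n=\lceil 6p\lambda\sqrt{C_3}\,e^{\lambda c_2(\eta)}\rceil$, while you feed $n$ directly and set $n=\lceil 36p^2\lambda^2 C_3\,e^{2\lambda(\eta+2\eta^2)}\rceil$; these are just the square of one another, and both land on the exponent $c_1(\eta)=2(\beta+1)(\eta+2\eta^2)$ in exactly the same way.
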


\begin{remark}
\label{R11}\textrm{We will see from the proof below that the energy $%
\mathcal{E}_{\rho }$(e$^{\lambda \phi }$f,e$^{-\lambda \phi }$f$^{2p-1}$)
has the same lower bound as in (\ref{eq:R}). }
\end{remark}

\begin{proof}
Applying Lemma \ref{lem:DV_1} with $\psi =\lambda \phi $, we have%
\begin{equation}
\mathcal{E}_{\rho }(e^{-\lambda \phi }f,e^{\lambda \phi }f^{2p-1})\geq \frac{%
1}{2p}\mathcal{E}_{\rho }(f^{p},f^{p})-9p\lambda ^{2}\Lambda _{\phi
}^{\lambda }\int f^{2p}d\Gamma _{\rho }(\phi ),  \label{eq:st0}
\end{equation}%
for any $\phi \in \text{cutoff}(B_{R},B_{R+r})$, any $0\leq f\in \mathcal{F}%
\cap L^{\infty }$ and any $p\geq 1$, $\lambda >0$, where $\Lambda _{\phi }$
is given by (\ref{43}) with $\psi $ being replaced by $\phi $. Using (\ref%
{60}) and \cite[Proposition 4.1]{grhulau2014nonlocal}, we have%
\begin{equation}
\mathcal{E}(f^{p})-\mathcal{E}_{\rho }(f^{p})\leq 4\int f^{2p}d\mu \cdot 
\underset{x\in M}{\sup }\left\{ \int_{B(x,\rho )^{c}}J(x,y)d\mu (y)\right\}
\leq \frac{C_{5}}{\rho ^{\beta }}\int f^{2p}d\mu ,  \label{r}
\end{equation}%
where $C_{5}\geq 0$ is some universal constant independent of $f,p,\rho $
(noting that $C_{5}=0$ if $J\equiv 0$).

Plugging this into (\ref{eq:st0}) we obtain%
\begin{equation}
\mathcal{E}_{\rho }(e^{-\lambda \phi }f,e^{\lambda \phi }f^{2p-1})\geq \frac{%
1}{2p}\left\{ \mathcal{E}(f^{p})-\frac{C_{5}}{\rho ^{\beta }}\int f^{2p}d\mu
\right\} -9p\lambda ^{2}\Lambda _{\phi }^{\lambda }\int f^{2p}d\Gamma _{\rho
}(\phi ).  \label{2}
\end{equation}

We further estimate the energy $\mathcal{E}_{\rho }(e^{-\lambda \phi
}f,e^{\lambda \phi }f^{2p-1})$ starting from (\ref{2}) by using a
self-improvement (\ref{eq:CSA_strong}) of condition (\ref{eq:CSA}).

To do this, we claim that%
\begin{eqnarray}
\mathcal{E}_{\rho }(e^{-\lambda \phi }f,e^{\lambda \phi }f^{2p-1}) &\geq &%
\frac{1}{4p}\mathcal{E}(f^{p})-\frac{C_{5}}{2p\rho ^{\beta }}\int f^{2p}d\mu
\notag \\
&&-9p\lambda ^{2}e^{2\lambda c_{2}(\eta )}\frac{C_{4}n^{2\beta }}{r^{\beta }}%
\int f^{2p}d\mu \text{,}  \label{eq:E_precise}
\end{eqnarray}%
where $c_{2}(\eta )$ is defined by 
\begin{equation}
c_{2}(\eta )=%
\begin{cases}
0, & J\equiv 0, \\ 
\eta +2\eta ^{2}, & J\neq 0.%
\end{cases}
\label{eq:c2_eta}
\end{equation}

We distinguish two cases.

\emph{Case }$J\neq 0$\emph{.} Applying (\ref{eq:CSA_strong}) with $u=f^{p}$
and $n$ being replaced by $n^{2}$, we have that for each integer $n\geq 1$,
there exists $\phi :=\phi _{n}\in \text{cutoff}(B_{R},B_{R+r})$ satisfying
that%
\begin{equation}
\int f^{2p}d\Gamma _{\rho }(\phi )\leq \int f^{2p}d\Gamma (\phi )\leq \frac{%
C_{3}}{n^{2}}\mathcal{E}(f^{p})+\frac{C_{4}n^{2\beta }}{r^{\beta }}%
\int_{M}f^{2p}d\mu ,  \label{eq:CSN}
\end{equation}%
and that%
\begin{equation}
\left\Vert \phi -\Phi \right\Vert _{\infty }\leq \frac{1}{n^{2}}.
\label{eq:a_per}
\end{equation}

By definition (\ref{eq:Phi_def}) of $\Phi $, we see that%
\begin{equation}
\text{\textrm{osc}}(\Phi ,\rho )\leq \sup_{d(x,y)\leq \rho }\frac{d(x,y)}{r}%
\leq \frac{\rho }{r}=\eta .  \label{3}
\end{equation}%
Thus, we see from (\ref{eq:a_per}), (\ref{3}), (\ref{eq:c2_eta}) that%
\begin{equation*}
\text{\textrm{osc}}(\phi ,\rho )\leq \text{\textrm{osc}}(\Phi ,\rho )+\frac{2%
}{n^{2}}\leq \eta +2\eta ^{2}=c_{2}(\eta )
\end{equation*}%
provided that%
\begin{equation}
n\geq \frac{1}{\eta }.  \label{4}
\end{equation}%
This implies by (\ref{43}) that%
\begin{equation*}
\Lambda _{\phi }^{\lambda }=e^{2\lambda \text{\textrm{osc}}(\phi ,\rho
)}\leq e^{2\lambda c_{2}(\eta )}.
\end{equation*}

Therefore, using this, we obtain from (\ref{2}), (\ref{eq:CSN}) that under (%
\ref{4}),%
\begin{eqnarray}
\mathcal{E}_{\rho }(e^{-\lambda \phi }f,e^{\lambda \phi }f^{2p-1}) &\geq &%
\frac{1}{2p}\left\{ \mathcal{E}(f^{p})-\frac{C_{5}}{\rho ^{\beta }}\int
f^{2p}d\mu \right\}  \notag \\
&&-9p\lambda ^{2}e^{2\lambda c_{2}(\eta )}\left\{ \frac{C_{3}}{n^{2}}%
\mathcal{E}(f^{p})+\frac{C_{4}n^{2\beta }}{r^{\beta }}\int_{M}f^{2p}d\mu
\right\}  \notag \\
&=&\left\{ \frac{1}{2p}-9p\lambda ^{2}e^{2\lambda c_{2}(\eta )}\frac{C_{3}}{%
n^{2}}\right\} \mathcal{E}(f^{p})  \notag \\
&&-\left\{ \frac{C_{5}}{2p\rho ^{\beta }}+9p\lambda ^{2}e^{2\lambda
c_{2}(\eta )}\frac{C_{4}n^{2\beta }}{r^{\beta }}\right\} \int f^{2p}d\mu .
\label{5}
\end{eqnarray}%
Choose the least integer $n\geq 1$ such that%
\begin{equation*}
\frac{1}{2p}-9p\lambda ^{2}e^{2\lambda c_{2}(\eta )}\frac{C_{3}}{n^{2}}\geq 
\frac{1}{4p},
\end{equation*}%
that is,%
\begin{equation}
n=\left\lceil 6p\lambda \exp (\lambda c_{2}(\eta ))\sqrt{C_{3}}\right\rceil .
\label{eq:n_choice}
\end{equation}%
With such choice of $n$, condition (\ref{4}) is satisfied by using the
assumption that $\lambda \geq \eta ^{-1}$, since $C_{3}\geq 1$ and%
\begin{equation*}
n=\left\lceil 6p\lambda \exp (\lambda c_{2}(\eta ))\sqrt{C_{3}}\right\rceil
\geq 6p\lambda >\frac{1}{\eta }.
\end{equation*}%
From this and using (\ref{5}), we obtain that (\ref{eq:E_precise}) holds for 
$J\neq 0$.

\textit{Case }$J\equiv 0$\textit{.} It is not difficult to see from above
that (\ref{eq:E_precise}) also follows from (\ref{2}) with $C_{5}=0$ and $%
c_{2}(\eta )=0$ if $J\equiv 0$, since $\Lambda _{\phi }\equiv 1$.

Therefore, inequality (\ref{eq:E_precise}) holds, and our claim is true.

Noting that by (\ref{eq:n_choice})%
\begin{equation*}
n\leq 6p\lambda \exp (\lambda c_{2}(\eta ))\sqrt{C_{3}}+1\leq 12p\lambda
\exp (\lambda c_{2}(\eta ))\sqrt{C_{3}},
\end{equation*}%
we have that, using the fact that $c_{1}(\eta )=2(\beta +1)c_{2}(\eta )$ by (%
\ref{eq:c1_Def}), (\ref{eq:c2_eta}), 
\begin{eqnarray}
9p\lambda ^{2}e^{2\lambda c_{2}(\eta )}\frac{C_{4}n^{2\beta }}{r^{\beta }}
&\leq &9p\lambda ^{2}e^{2\lambda c_{2}(\eta )}\frac{C_{4}\left( 12p\lambda
\exp (\lambda c_{2}(\eta ))\sqrt{C_{3}}\right) ^{2\beta }}{r^{\beta }} 
\notag \\
&=&C_{6}p^{2\beta +1}\lambda ^{2\beta +2}\frac{\exp (2(\beta +1)c_{2}(\eta
)\lambda )}{r^{\beta }}  \notag \\
&=&C_{6}p^{2\beta +1}\lambda ^{2\beta +2}\frac{\exp (c_{1}(\eta )\lambda )}{%
r^{\beta }},  \label{eq:L_term}
\end{eqnarray}%
where $C_{6}=9\times 12^{2\beta }C_{4}C_{3}^{\beta }$. Plugging (\ref%
{eq:L_term}) into (\ref{eq:E_precise}), we see that%
\begin{eqnarray}
\mathcal{E}_{\rho }(e^{-\lambda \phi }f,e^{\lambda \phi }f^{2p-1}) &\geq &%
\frac{1}{4p}\mathcal{E}(f^{p})-\frac{C_{5}}{2p\rho ^{\beta }}\int f^{2p}d\mu
\notag \\
&&-C_{6}p^{2\beta +1}\lambda ^{2\beta +2}\frac{\exp (c_{1}(\eta )\lambda )}{%
r^{\beta }}\int f^{2p}d\mu \text{,}  \label{Energy}
\end{eqnarray}%
which gives that%
\begin{equation*}
\mathcal{E}_{\rho }(e^{-\lambda \phi }f,e^{\lambda \phi }f^{2p-1})\geq \frac{%
1}{4p}\mathcal{E}(f^{p})-p^{2\beta +1}\lambda ^{2\beta +2}\left[ \frac{C_{5}%
}{\rho ^{\beta }}+\frac{C_{6}\exp (c_{1}(\eta )\lambda )}{r^{\beta }}\right]
\int f^{2p}d\mu ,
\end{equation*}%
thus, proving (\ref{eq:R}) by setting $C_{0}=C_{5}+C_{6}$, with $T$ given by
(\ref{tt}).

Finally, inequality (\ref{eq:perf_func}) follows directly from (\ref%
{eq:a_per}), (\ref{eq:n_choice}) by noting that $\frac{1}{n^{2}}\leq \frac{1%
}{(6p\lambda )^{2}}$. The proof is complete.
\end{proof}

To prove the off-diagonal upper bound (\ref{eq:UE}), we need the following
two lemmas. We begin with the first one, Lemma \ref{lem:F-S} below, which
can be proved as in \cite[Lemma 3.7]{murugan2015heat}, see also \cite[Lemma
3.21]{carlen1987upper}.

\begin{lemma}
\label{lem:F-S}Let $w:(0,\infty )\rightarrow (0,\infty )$ be a
non-decreasing function and suppose that $u\in C^{1}([0,\infty );(0,\infty
)) $ satisfies that for all $t\geq 0$, 
\begin{equation}
u^{\prime }(t)\leq -b\frac{t^{p-2}}{w^{\theta }(t)}u^{1+\theta }(t)+Ku(t)
\label{eq:Ab_lemma_cond}
\end{equation}%
for some $b>0,$ $p>1$, $\theta >0$ and $K>0$. Then 
\begin{equation}
u(t)\leq \left( \frac{2p^{v}}{\theta b}\right) ^{1/\theta }t^{-(p-1)/\theta
}e^{Kp^{-\nu }t}w(t)  \label{eq:Ab_R}
\end{equation}%
for any $\nu \geq 1$.
\end{lemma}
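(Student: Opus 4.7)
The plan is to convert the differential inequality for $u$ into a linear differential inequality for $v(t):=u(t)^{-\theta}$, which can then be integrated explicitly with an integrating factor. The $-bt^{p-2}u^{1+\theta}/w^\theta$ term produces a polynomial growth for $v$, while the $Ku$ term becomes a linear $-K\theta v$ term that the integrating factor $e^{K\theta t}$ absorbs cleanly. The parameter $\nu \geq 1$ will appear through a specific choice of the sub-interval on which we bound the integral from below.

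More concretely, I first differentiate $v=u^{-\theta}$, which, since $u>0$, turns the hypothesis \eqref{eq:Ab_lemma_cond} into
\begin{equation*}
v'(t)+K\theta\,v(t)\;\geq\;\theta b\,\frac{t^{p-2}}{w^\theta(t)}.
\end{equation*}
Multiplying by $e^{K\theta t}$ and integrating from $0$ to $t$, using $v(0)=u(0)^{-\theta}>0$, I obtain
\begin{equation*}
e^{K\theta t}v(t)\;\geq\;\theta b\int_0^t e^{K\theta s}\,\frac{s^{p-2}}{w^\theta(s)}\,ds.
\end{equation*}
Since $w$ is non-decreasing, $w(s)\leq w(t)$ for $s\leq t$, so the right-hand side is at least $\frac{\theta b}{w^\theta(t)}\int_0^t e^{K\theta s}s^{p-2}\,ds$.

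Next, I restrict the integral to $[at,t]$ where $a:=1-p^{-\nu}\in(0,1)$. On this interval $e^{K\theta s}\geq e^{K\theta at}$, so
\begin{equation*}
\int_0^t e^{K\theta s}s^{p-2}\,ds\;\geq\; e^{K\theta at}\,\frac{t^{p-1}\bigl(1-a^{p-1}\bigr)}{p-1}
\;=\;e^{K\theta(1-p^{-\nu})t}\,\frac{t^{p-1}\bigl(1-(1-p^{-\nu})^{p-1}\bigr)}{p-1}.
\end{equation*}
Combining, rearranging for $u(t)=v(t)^{-1/\theta}$, and cancelling $e^{K\theta t}$ against $e^{K\theta(1-p^{-\nu})t}$ yields
\begin{equation*}
u(t)\;\leq\;\Bigl(\tfrac{\theta b(1-(1-p^{-\nu})^{p-1})}{p-1}\Bigr)^{-1/\theta}\,w(t)\,t^{-(p-1)/\theta}\,e^{Kp^{-\nu}t}.
\end{equation*}

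The main (small) obstacle is the elementary inequality needed to absorb the awkward constant into the clean form $(2p^\nu/(\theta b))^{1/\theta}$, namely
\begin{equation*}
\frac{1-(1-p^{-\nu})^{p-1}}{p-1}\;\geq\;\frac{1}{2p^{\nu}}\qquad(p>1,\ \nu\geq 1).
\end{equation*}
I plan to prove this by writing $y:=p^{-\nu}\in(0,1/p]$ and showing $(1-y)^{p-1}\leq 1-(p-1)y/2$. Taking logarithms this reduces to $(p-1)\log(1-y)\leq \log(1-(p-1)y/2)$; using $\log(1-z)\leq -z$ on the left and $\log(1-z)\geq -z/(1-z)$ on the right, both sides are comparable to $-(p-1)y$, and the inequality $(p-1)y\leq 1$ (which follows from $\nu\geq 1$) makes the comparison go through. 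Substituting this bound into the previous estimate yields exactly \eqref{eq:Ab_R}.
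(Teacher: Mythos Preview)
Your proof is correct and follows the standard route that the paper defers to (the paper gives no proof of its own, citing \cite[Lemma 3.7]{murugan2015heat} and \cite[Lemma 3.21]{carlen1987upper}); the substitution $v=u^{-\theta}$, the integrating factor $e^{K\theta t}$, and the restriction to $[at,t]$ with $a=1-p^{-\nu}$ are exactly the classical Carlen--Kusuoka--Stroock argument. Your handling of the elementary constant inequality is also fine: with $A=-(p-1)y$ and $B=-\tfrac{(p-1)y/2}{1-(p-1)y/2}$ one has $\text{LHS}\leq A\leq B\leq \text{RHS}$, the middle inequality $A\leq B$ reducing precisely to $(p-1)y\leq 1$, which holds since $y=p^{-\nu}\leq 1/p$.
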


We give the following second lemma that is of independent interest.

\begin{lemma}
\label{UEiter} Assume that condition \ref{eq:vol_l} holds. If the heat
kernel $p_{t}(x,y)$ satisfies%
\begin{equation}
p_{t}(x,y)\leq \frac{C}{t^{\alpha /\beta }}\exp (\,R_{0}^{-\beta }t\,)\exp
\left( -c\left( \frac{d(x,y)}{t^{1/\beta }}\right) ^{\frac{\beta }{\beta
^{\prime }-1}}\right)  \label{hkASS}
\end{equation}%
for $\mu $-almost all $x,y\in M$ and for all $t>0$, where $\beta ^{\prime
}>\beta >1$ and $C,c$ are independent of $R_{0}$, then it also satisfies \ref%
{eq:UE_loc} (that is, estimate (\ref{hkASS}) also holds with $\beta ^{\prime
}$ being replaced by $\beta $ and with some $C,c$ being independent of $%
R_{0} $).
\end{lemma}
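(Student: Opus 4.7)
The plan is to apply Grigoryan's self-improvement iteration from \cite{grigor2014upper}: starting from the hypothesized bound with the (weaker) stretched-exponential exponent $\gamma := \beta/(\beta'-1)$, iterated Chapman--Kolmogorov with $n$ time-steps boosts the exponent to the sub-Gaussian target $\beta/(\beta-1)$, exploiting precisely the strict gap $\gamma < \beta/(\beta-1)$. First I would handle the near-diagonal regime $d(x,y) \lesssim t^{1/\beta}$ directly from the prefactor of the hypothesis (the exponential factor is then $O(1)$); henceforth assume $s := d(x,y)/t^{1/\beta}$ is large.

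For each integer $n \geq 2$, write
\[
p_t(x,y) = \int_{M^{n-1}} \prod_{i=1}^n p_{t/n}(z_{i-1},z_i)\, d\mu(z_1)\cdots d\mu(z_{n-1}), \qquad z_0=x,\ z_n=y,
\]
and substitute the hypothesis into each factor. The crucial lower bound on the combined Gaussian exponent is
\[
\sum_{i=1}^n \left(\frac{d(z_{i-1},z_i)}{(t/n)^{1/\beta}}\right)^\gamma \geq n^{\sigma} \left(\frac{d(x,y)}{t^{1/\beta}}\right)^\gamma,
\]
which follows from $\sum_i d(z_{i-1},z_i) \geq d(x,y)$ combined with Jensen's inequality (when $\gamma \geq 1$) or with the subadditivity $\sum a_i^\gamma \geq (\sum a_i)^\gamma$ (when $\gamma < 1$); a short computation shows $\sigma = \sigma(\beta,\beta') > 0$ precisely because $\gamma < \beta/(\beta-1)$.

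The next manoeuvre is to split $\exp\bigl(-c\sum\cdots\bigr)$ into two equal halves. Use one half together with the lower bound above to extract the Gaussian ``gain'' $\exp\bigl(-\tfrac{c}{2}n^{\sigma} s^\gamma\bigr)$ outside the integral; pair the other half with the polynomial prefactor $(Cn^{\alpha/\beta}/t^{\alpha/\beta})^n$ and with the sub-Markov telescoping $\int p_{t/n}(\cdot,z)\,d\mu(z)\leq 1$ to control the residual $(n-1)$-fold integral. Finally optimize by choosing $n \asymp s^{(\beta/(\beta-1)-\gamma)/\sigma}$, so that $n^{\sigma} s^\gamma \asymp s^{\beta/(\beta-1)}$; in the large-$s$ regime the Gaussian domination absorbs the remaining polynomial-in-$n$ growth into the constants $C,c$, producing \ref{eq:UE_loc} with constants independent of $R_0$.

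The main obstacle lies in controlling the integration step: the prefactor $(Cn^{\alpha/\beta}/t^{\alpha/\beta})^n$ exhibits factorial-type growth of order $n^{n\alpha/\beta}$, and a naive estimate of the $(n-1)$-fold integral is likewise super-polynomial in $n$. The delicate book-keeping required to absorb both of these against the Gaussian gain is the technical heart of the argument, and succeeds only because $\sigma>0$, equivalently, only because the starting exponent strictly violates the sub-Gaussian rate; at the critical case $\gamma=\beta/(\beta-1)$ no improvement would be possible.
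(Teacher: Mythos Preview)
Your integration step does not work as stated, and this is a genuine gap, not just missing book-keeping. After you split off the ``gain'' factor $\exp(-\tfrac{c}{2}n^{\sigma}s^{\gamma})$, the residual integrand is a product of the \emph{upper bounds} $A\exp(-\tfrac{c}{2}\rho_i^{\gamma})$ with $A=C(n/t)^{\alpha/\beta}$, not a product of the heat kernels $p_{t/n}$; sub-Markov telescoping therefore does not apply. What one can do instead is integrate out the variables one by one using \ref{eq:vol_l}, which gives $\int_M \exp(-\tfrac{c}{2}\rho^{\gamma})\,d\mu \le C''(t/n)^{\alpha/\beta}$ at each step. The $(t/n)^{\alpha/\beta}$ factors cancel all but one of the prefactors $A$, so the ``factorial-type'' $n^{n\alpha/\beta}$ disappears --- but you are left with an \emph{exponential} loss $(CC'')^{n}$, not a polynomial one. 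Thus the quantity to optimise is $\tfrac{c}{2}n^{\sigma}s^{\gamma}-c'n$, and its maximum over $n$ is of order $s^{\gamma/(1-\sigma)}$.

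Now compute: when $\gamma\ge 1$ one has $\sigma=1-\gamma(\beta-1)/\beta$ and indeed $\gamma/(1-\sigma)=\beta/(\beta-1)$, so the method succeeds in one shot. But when $\gamma<1$ --- equivalently $\beta'>\beta+1$, which is exactly the case the paper needs (it applies the lemma with $\beta'=2\beta+2$) --- the sharp lower bound on $\sum\rho_i^{\gamma}$ is achieved when all the distance is concentrated in a single step, forcing $\sigma=\gamma/\beta$; then $\gamma/(1-\sigma)=\gamma\beta/(\beta-\gamma)$, which is strictly smaller than $\beta/(\beta-1)$. At your stated choice $n\asymp s^{(\beta/(\beta-1)-\gamma)/\sigma}$ the loss $c'n$ actually dominates the gain, so the bound blows up rather than improves. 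Your one-shot optimisation therefore cannot reach the sub-Gaussian exponent when $\gamma<1$; at best it yields an improved exponent $\gamma_1=\gamma\beta/(\beta-\gamma)$, and one must iterate.

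The paper's proof is different in mechanism though it arrives at the same iteration map $\gamma\mapsto\gamma\beta/(\beta-\gamma)$ (equivalently $\beta'\mapsto\beta'-1$). Rather than $n$-fold Chapman--Kolmogorov, it shows that the semigroup is bounded on the weighted space: $P_tE_{t,x}\le A\,e^{R_0^{-\beta}t}E_{t,x}$ with $E_{t,x}(\cdot)=\exp\bigl(a(d(x,\cdot)/t^{1/\beta})^{\gamma}\bigr)$, then iterates $P_{kt}E_{t,x}\le A^{k}e^{kR_0^{-\beta}t}$ and converts this into a tail bound $P_t\mathbf 1_{B_R^c}\le A_0\,e^{R_0^{-\beta}t}\exp(a'\lambda t-a(R\lambda^{1/\beta})^{\gamma})$. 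Optimising in $\lambda$ promotes the exponent by one unit of $\beta'$, and a single Chapman--Kolmogorov step turns the tail bound back into a pointwise one. The weighted-$L^{\infty}$ route avoids the uncontrolled $(CC'')^{n}$ altogether.
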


\begin{remark}
\label{R:It}\textrm{\textrm{Lemma \ref{UEiter} is a self-improvement of the
heat kernel estimate, raising some power $\frac{\beta }{\beta ^{\prime }-1}$
to the power $\frac{\beta }{\beta -1}$, the best one possible. The smaller $%
\beta ^{\prime }$ is, the sharper (\ref{hkASS})$.$ } }
\end{remark}

The proof below is inspired by \cite[proof of Theorem 5.7, pp. 542-544]%
{grigor2014upper} wherein $\beta ^{\prime }=\beta +1$ and $R_{0}=\infty $.
We will see that $\beta ^{\prime }=2\beta +2$ in our application.

\begin{proof}
We claim that if $\beta ^{\prime }\geq \beta +1$, then (\ref{hkASS}) also
holds with $\beta ^{\prime }$ being replaced by $\beta ^{\prime }-1$. The
proof is quite long.

Let 
\begin{equation}
\theta :=\beta /\left( \beta ^{\prime }-1\right) .  \label{Y}
\end{equation}%
Clearly, $0<\theta \leq 1$. For $x\in M,t>0$, let 
\begin{equation}
r=2t^{1/\beta }/\delta  \label{rDef}
\end{equation}%
with $\delta >0$ to be chosen later. Set $B:=B(x,r)$, $B_{k}=kB$ ($k\geq 1$%
), $B_{0}=\emptyset $.

Let us show that for any $0<\varepsilon <1$, there exists some $\delta
=\delta (\varepsilon )>0$ such that%
\begin{equation}
P_{t}1_{B_{k}^{c}}\leq \exp (\,R_{0}^{-\beta }t\,)k^{\alpha }\varepsilon
^{k^{\theta }}\text{ in }\frac{1}{4}B  \label{Pt1B}
\end{equation}%
for any integer $k\geq 1$ and any $t>0$.

Indeed, if $B_{k}^{c}$ is empty, then (\ref{Pt1B}) is trivial since $%
P_{t}1_{B_{k}^{c}}=0$ in $M$. Assume that $B_{k}^{c}\neq \emptyset $. Using %
\ref{eq:vol_l} and (\ref{rDef}), we have from (\ref{hkASS}) that for $\mu $%
-almost all $y\in \frac{1}{4}B$ and all $t>0$,%
\begin{eqnarray}
P_{t}1_{B_{k}^{c}}(y) &\leq &\int_{M\setminus B(x,kr)}\frac{C}{t^{\alpha
/\beta }}\exp (\,R_{0}^{-\beta }t\,)\exp \left( -c\left( \frac{d(y,z)}{%
t^{1/\beta }}\right) ^{\frac{\beta }{\beta ^{\prime }-1}}\right) d\mu (z) 
\notag \\
&\leq &\exp (\,R_{0}^{-\beta }t\,)\int_{M\setminus B(x,kr)}\frac{C}{%
t^{\alpha /\beta }}\exp \left( -c^{\prime }\left( \frac{d(x,z)}{t^{1/\beta }}%
\right) ^{\frac{\beta }{\beta ^{\prime }-1}}\right) d\mu (z)  \notag \\
&\leq &\exp (\,R_{0}^{-\beta }t\,)C^{\prime }\int_{k/\delta }^{\infty
}s^{\alpha -1}\exp (-c^{\prime }s^{\theta })ds  \notag \\
&=&\exp (\,R_{0}^{-\beta }t\,)k^{\alpha }C^{\prime }\int_{1/\delta }^{\infty
}s^{\alpha -1}\exp (-c^{\prime }\left( ks\right) ^{\theta })ds  \label{Pt2}
\end{eqnarray}%
for any $k\geq 1$ (see \cite[formula (3.7)]{grigor2003heat}), where $%
C^{\prime },c^{\prime }$ are independent of $R_{0}$.

For any $0<\varepsilon <1,$ choose $\delta >0$ to be so small that both of
what follows are satisfied:%
\begin{eqnarray*}
C^{\prime }\int_{1/\delta }^{\infty }s^{\alpha -1}\exp \left( -\frac{%
c^{\prime }}{2}s^{\theta }\right) ds &\leq &1, \\
\exp \left( -\frac{c^{\prime }}{2}\delta ^{-\theta }\right) &\leq
&\varepsilon .
\end{eqnarray*}%
From this, we have%
\begin{eqnarray*}
C^{\prime }\int_{1/\delta }^{\infty }s^{\alpha -1}\exp (-c^{\prime }\left(
ks\right) ^{\theta })ds &=&C^{\prime }\int_{1/\delta }^{\infty }\exp \left( -%
\frac{c^{\prime }}{2}k^{\theta }s^{\theta }\right) \cdot s^{\alpha -1}\exp
\left( -\frac{c^{\prime }}{2}k^{\theta }s^{\theta }\right) ds \\
&\leq &\exp \left( -\frac{c^{\prime }}{2}k^{\theta }\delta ^{-\theta
}\right) \cdot C^{\prime }\int_{1/\delta }^{\infty }s^{\alpha -1}\exp \left(
-\frac{c^{\prime }}{2}s^{\theta }\right) ds \\
&\leq &\left\{ \exp \left( -\frac{c^{\prime }}{2}\delta ^{-\theta }\right)
\right\} ^{k^{\theta }}\leq \varepsilon ^{k^{\theta }},
\end{eqnarray*}%
Therefore, by (\ref{Pt2}),%
\begin{equation*}
P_{t}1_{B_{k}^{c}}(y)\leq \exp (\,R_{0}^{-\beta }t\,)k^{\alpha }C^{\prime
}\int_{1/\delta }^{\infty }s^{\alpha -1}\exp (-c^{\prime }\left( ks\right)
^{\theta })ds\leq \exp (\,R_{0}^{-\beta }t\,)k^{\alpha }\varepsilon
^{k^{\theta }},
\end{equation*}%
thus proving (\ref{Pt1B}).

Define the function $E_{t,x}$ by 
\begin{equation}
E_{t,x}(\cdot )=\exp \left( a\left( \frac{d(x,\cdot )}{t^{1/\beta }}\right)
^{\theta }\right)  \label{Et,x}
\end{equation}%
for some constant $a>0$ to be determined later. Let us show that for all $%
t>0 $ and all $x\in M$,%
\begin{equation}
P_{t}E_{t,x}\leq A_{1}\exp (\,R_{0}^{-\beta }t\,)\text{ a.a. in }B(x,r/4),
\label{PtA1}
\end{equation}%
where $A_{1}$ is some constant depending on $\varepsilon ,\delta $ only.

Indeed, by (\ref{Et,x}) and (\ref{Pt1B}), (\ref{rDef}), we have that in $%
\frac{1}{4}B$, 
\begin{align*}
P_{t}E_{t,x}& =\overset{\infty }{\underset{k=0}{\sum }}P_{t}\left(
1_{B_{k+1}\setminus B_{k}}E_{t,x}\right) \leq \overset{\infty }{\underset{k=0%
}{\sum }}\left\Vert E_{t,x}\right\Vert _{L^{\infty
}(B_{k+1})}P_{t}1_{B_{k+1}\setminus B_{k}} \\
& \leq \overset{\infty }{\underset{k=0}{\sum }}\exp \left( a\left( \frac{%
(k+1)r}{t^{1/\beta }}\right) ^{\theta }\right) \cdot P_{t}1_{B_{k}^{c}} \\
& \leq \overset{\infty }{\underset{k=0}{\sum }}\exp \left( a2^{\theta
}\left( \frac{k+1}{\delta }\right) ^{\theta }\right) \cdot \exp
(\,R_{0}^{-\beta }t\,)k^{\alpha }\varepsilon ^{k^{\theta }}.
\end{align*}%
Choose $a<\frac{1}{3}\left( \delta /2\right) ^{\theta }\log (1/\varepsilon )$
such that this series converges, proving (\ref{PtA1}).

Let us show that for\ all $t>0$ and all $x\in M$, 
\begin{equation}
P_{t}E_{t,x}\leq A_{2}\exp (\,R_{0}^{-\beta }t\,)E_{t,x}\text{ in }M,
\label{PtE}
\end{equation}%
for some constant $A_{2}=A_{2}(\varepsilon ,\delta )$.

Indeed, using the elementary inequality that $(a+b)^{\theta }\leq a^{\theta
}+b^{\theta }$ for any $a,b\geq 0$ and any $0\leq \theta \leq 1$, we have
that for any $x,y,z\in M$ and $t>0$,%
\begin{eqnarray*}
E_{t,x}(y) &=&\exp \left( a\left( \frac{d(x,y)}{t^{1/\beta }}\right)
^{\theta }\right) \\
&\leq &\exp \left( a\left( \frac{d(x,z)}{t^{1/\beta }}\right) ^{\theta
}\right) \exp \left( a\left( \frac{d(z,y)}{t^{1/\beta }}\right) ^{\theta
}\right) =E_{t,x}(z)E_{t,z}(y),
\end{eqnarray*}%
that is, $E_{t,x}\leq E_{t,x}(z)E_{t,z}$, and thus%
\begin{equation}
P_{t}E_{t,x}\leq E_{t,x}(z)P_{t}E_{t,z}.  \label{Ptl}
\end{equation}%
Note that by (\ref{PtA1})%
\begin{equation}
P_{t}E_{t,z}\leq A_{1}\exp (\,R_{0}^{-\beta }t\,)\text{ a.a. in }B(z,r/4)
\label{PtA}
\end{equation}%
for all $t>0$. For all $y\in B(z,r/4)$, by (\ref{rDef}),%
\begin{equation*}
E_{t,y}(z)\leq \exp \left( a\left( \frac{r}{4t^{1/\beta }}\right) ^{\theta
}\right) =\exp \left( a\left( 2\delta \right) ^{-\theta }\right) :=A_{3},
\end{equation*}%
and hence,%
\begin{equation*}
E_{t,x}(z)\leq E_{t,x}(y)E_{t,y}(z)\leq A_{3}E_{t,x}(y).
\end{equation*}%
It follows from (\ref{Ptl}), (\ref{PtA}) that%
\begin{equation*}
P_{t}E_{t,x}\leq A_{1}A_{3}\exp (\,R_{0}^{-\beta }t\,)E_{t,x}\text{ a.a. in }%
B(z,r/4).
\end{equation*}%
Since the point $z$ is arbitrary, we cover $M$ by a countable sequence of
balls like $B(z,r)$, and obtain that (\ref{PtE}) is true with $%
A_{2}=A_{1}A_{3}$.

Let us show that for all $t>0,x\in M$, and for any integer $k\geq 1$, 
\begin{equation}
P_{kt}E_{t,x}\leq \exp (\,kR_{0}^{-\beta }t\,)A_{2}^{k}\text{ a.a. in }%
B(x,r/4)\text{ with }r=2t^{1/\beta }/\delta .  \label{PtB}
\end{equation}%
Indeed, by (\ref{PtE})%
\begin{equation*}
P_{kt}E_{t,x}=P_{(k-1)t}P_{t}E_{t,x}\leq \left\{ A_{2}\exp (\,R_{0}^{-\beta
}t\,)\right\} P_{(k-1)t}E_{t,x}\leq \ldots \leq A_{2}^{k-1}\exp
(\,(k-1)R_{0}^{-\beta }t\,)P_{t}E_{t,x},
\end{equation*}%
which together with (\ref{PtA1}) gives (\ref{PtB}), where we have used $%
A_{2}\geq A_{1}$.

Fix $B_{R}:=B(x_{0},R)$ for any $R>0$ and any $x_{0}\in M$. We show that%
\begin{equation}
P_{t}1_{B_{R}^{c}}\leq A_{0}\exp (\,R_{0}^{-\beta }t\,)\exp \left( a^{\prime
}\lambda t-a\left( R\lambda ^{1/\beta }\right) ^{\theta }\right) \text{ in }%
\frac{1}{2}B_{R}  \label{Cov}
\end{equation}%
for all $t>0$ and all $\lambda >0$, where constants $A_{0},a^{\prime }$
depend on $\varepsilon ,\delta $ only.

Indeed, assume that $B_{R}^{c}\neq \emptyset $; otherwise (\ref{Cov}) is
trivial. Observe that for any $x\in \frac{1}{2}B_{R}$,%
\begin{equation*}
P_{t}1_{B_{R}^{c}}\leq P_{t}1_{B(x,R/2)^{c}}.
\end{equation*}%
It suffices to show that for all $x\in \frac{1}{2}B_{R}$ and all $t>0,$ 
\begin{equation}
P_{t}1_{B(x,R/2)^{c}}\leq A_{0}\exp (\,R_{0}^{-\beta }t\,)\exp \left(
a^{\prime }\lambda t-a\left( R\lambda ^{1/\beta }\right) ^{\theta }\right)
\label{46}
\end{equation}%
in a (small) ball containing $x$. Then covering $\frac{1}{2}B_{R}$ by a
countable family of such balls, we obtain (\ref{Cov}).

To see this, replacing $t$ by $t/k$ in (\ref{PtB}), we have that for all $%
t>0,x\in M$ and any $k\geq 1$, 
\begin{equation*}
P_{t}E_{t/k,x}\leq \exp (\,R_{0}^{-\beta }t\,)A_{2}^{k}\text{ in }B(x,r_{k}),
\end{equation*}%
where $r_{k}=\left( t/k\right) ^{1/\beta }/(2\delta )$. Since%
\begin{equation*}
E_{t/k,x}\geq \exp \left( a\left( \frac{R}{\left( t/k\right) ^{1/\beta }}%
\right) ^{\theta }\right) \text{ in }B(x,R)^{c},
\end{equation*}%
we have that%
\begin{equation*}
1_{B(x,R)^{c}}\leq \exp \left( -a\left( \frac{R}{\left( t/k\right) ^{1/\beta
}}\right) ^{\theta }\right) E_{t/k,x}\text{ in }M.
\end{equation*}%
It follows that for all $t>0,x\in M$%
\begin{equation*}
P_{t}1_{B(x,R)^{c}}\leq \exp \left( -a\left( \frac{R}{\left( t/k\right)
^{1/\beta }}\right) ^{\theta }\right) P_{t}E_{t/k,x}\leq \exp
(\,R_{0}^{-\beta }t\,)\exp \left( a^{\prime }k-a\left( \frac{R}{\left(
t/k\right) ^{1/\beta }}\right) ^{\theta }\right)
\end{equation*}%
in $B(x,r_{k})$, where $a^{\prime }=\log A_{2}$. Given any $\lambda >0$ and
any $t>0$, we can choose an integer $k\geq 1$ such that%
\begin{equation*}
\lambda t\leq k<\lambda t+1.
\end{equation*}%
With such choice of $k$, we conclude that for all $t>0,x\in M$ and all $%
\lambda >0$,%
\begin{equation*}
P_{t}1_{B(x,R)^{c}}\leq \exp (\,R_{0}^{-\beta }t\,)\exp \left( a^{\prime
}\left( \lambda t+1\right) -a\left( \frac{R}{\left( 1/\lambda \right)
^{1/\beta }}\right) ^{\theta }\right)
\end{equation*}%
in $B(x,r_{k})$, which finishes the proof of (\ref{46}), and also of (\ref%
{Cov}).

Choosing $\lambda $ in (\ref{Cov}) such that $a^{\prime }\lambda t=a\left(
R\lambda ^{1/\beta }\right) ^{\theta }/2$, that is, 
\begin{equation*}
\lambda =\left( \frac{aR^{\theta }}{2a^{\prime }t}\right) ^{\frac{\beta }{%
\beta -\theta }},
\end{equation*}%
we conclude that for all $t>0$,%
\begin{equation}
P_{t}1_{B_{R}^{c}}\leq A_{0}\exp (\,R_{0}^{-\beta }t\,)\exp \left(
-a^{\prime }\lambda t\right) =A_{0}\exp (\,R_{0}^{-\beta }t\,)\exp \left(
-c\left( \frac{R}{t^{1/\beta }}\right) ^{\frac{\beta }{\beta /\theta -1}%
}\right)  \label{47}
\end{equation}%
in $B(x_{0},R/2)$, for some universal constant $c>0$ (also independent of $%
R_{0}$).

For two distinct points $x_{0},y_{0}\in M$, let $R=$ $d(x_{0},y_{0})/2$. By
the semigroup property, 
\begin{eqnarray}
p_{2t}\left( x,y\right) &=&\int_{M}p_{t}(x,z)p_{t}(z,y)d\mu (z)  \notag \\
&\leq &\left\{ \int_{B(x_{0},R)^{c}}+\int_{B(y_{0},R)^{c}}\right\}
p_{t}(x,z)p_{t}(z,y)d\mu (z)=:I_{1}(x,y)+I_{2}(x,y).  \label{I}
\end{eqnarray}%
Using (\ref{eq:DUE}) and (\ref{47}), we have that for all $t>0$ and $\mu $%
-almost all $x\in B(x_{0},R/2),y\in M$, 
\begin{eqnarray}
I_{1}(x,y) &=&\int_{B(x_{0},R)^{c}}p_{t}(x,z)p_{t}(z,y)d\mu (z)  \notag \\
&\leq &\underset{z\in B(x_{0},R)^{c}}{\sup }p_{t}(z,y)\cdot
\int_{B(x_{0},R)^{c}}p_{t}(x,z)d\mu (z)  \notag \\
&\leq &\frac{C}{t^{\alpha /\beta }}\exp (\,2R_{0}^{-\beta }t\,)\exp \left(
-c\left( \frac{R}{t^{1/\beta }}\right) ^{\frac{\beta }{\beta /\theta -1}%
}\right)  \notag \\
&=&\frac{C}{t^{\alpha /\beta }}\exp (\,2R_{0}^{-\beta }t\,)\exp \left(
-c\left( \frac{d(x_{0},y_{0})}{2t^{1/\beta }}\right) ^{\frac{\beta }{\beta
/\theta -1}}\right) .  \label{I1}
\end{eqnarray}%
Similarly, for all $t>0$ and $\mu $-almost all $y\in B(y_{0},R/2),x\in M$,%
\begin{equation}
I_{2}(x,y)\leq \frac{C}{t^{\alpha /\beta }}\exp (\,2R_{0}^{-\beta }t\,)\exp
\left( -c\left( \frac{d(x_{0},y_{0})}{2t^{1/\beta }}\right) ^{\frac{\beta }{%
\beta /\theta -1}}\right) .  \label{I_2}
\end{equation}%
Plugging (\ref{I1}), (\ref{I_2}) into (\ref{I}) and then renaming $2t$ by $t$%
, we obtain that (\ref{hkASS}) holds with $\beta ^{\prime }$ being replaced
by $\beta ^{\prime }-1=\beta /\theta $ (cf. \cite[pp. 183-184]%
{grigor14heatkernelMMS}), thus proving our claim.

Finally, repeat our claim $k$\ times until the integer $k$ satisfies%
\begin{equation*}
\beta <\beta ^{\prime }-k\leq \beta +1,
\end{equation*}%
that is, $1\leq \frac{\beta }{\beta ^{\prime }-k-1}<\frac{\beta }{\beta -1}$%
. Then (\ref{hkASS}) holds with $\beta ^{\prime }$ being replaced by $\beta
^{\prime }-k$, which also implies that (\ref{hkASS}) holds with $\beta
^{\prime }=\beta +1$ by reducing the value of $\frac{\beta }{\beta ^{\prime
}-k-1}$ to $1$. From this, we repeat the claim one more time (where $\theta
=1$), and obtain \ref{eq:UE_loc}, as desired.
\end{proof}

We are now in a position to prove Theorem \ref{thm:Mr2}.

\begin{proof}[Proof of Theorem \protect\ref{thm:Mr2}]
Fix $x_{0}\in M$. Let $f\in \mathcal{F}\cap L^{\infty }$ be nonnegative with 
$\left\Vert f\right\Vert _{2}=1$. For $0<r<R_{0}/2$, set 
\begin{equation}
\rho :=\eta r,  \label{eq:rho_setup}
\end{equation}%
where $0<\eta <1$ will be specified later. For any integer $k\geq 0$, set $%
p_{k}=2^{k}$ and%
\begin{equation}
\psi _{k}:=\lambda \phi _{p_{k},\lambda }  \label{eq:Psi_choice}
\end{equation}%
where $\phi _{p_{k},\lambda }\in $\textrm{cutoff}$(B(x_{0},r),B(x_{0},2r))$
is given by (\ref{eq:R}) with $p=p_{k}$ and with $\lambda \geq \eta ^{-1}$
to be chosen later. Clearly, for $\mu $-almost all $x\in B(x_{0},r),y\in
M\setminus B(x_{0},2r)$ 
\begin{equation}
\psi _{k}(y)-\psi _{k}(x)=\lambda \cdot 0-\lambda \cdot 1=-\lambda .
\label{eq:Psi_prop}
\end{equation}

Let $(\mathcal{E}_{\rho },\mathcal{F}_{\rho })$ be the truncated Dirichlet
form given by (\ref{eq:DF_q}). Denote by $q_{t}^{(\rho )}(x,y)$, $\left\{
Q_{t}\right\} _{t\geq 0}$ the heat kernel and heat semigroup associated with 
$(\mathcal{E}_{\rho },\mathcal{F}_{\rho })$ respectively. We define the
\textquotedblleft perturbed semigroup\textquotedblright\ by 
\begin{equation*}
Q_{t}^{\psi _{k}}f=e^{\psi _{k}}\left( Q_{t}\left( e^{-\psi _{k}}f\right)
\right) .
\end{equation*}%
For simplicity set for any integer $k\geq 0$ 
\begin{equation}
f_{t,k}:=Q_{t}^{\psi _{k}}f.  \label{eq:p_sgrp}
\end{equation}%
Clearly, the function $f_{t,k}\in \mathcal{F}\cap L^{\infty }\subset 
\mathcal{F}_{\rho }$.

By applying (\ref{eq:R}) again with $p=p_{k},R=r,\phi =\phi _{p_{k},\lambda
} $ and but with $f$ being replaced by $f_{t,k}$ this time, and by setting 
\begin{equation}
K_{0}:=TC_{0}\lambda ^{2\beta +2}  \label{eq:K_def}
\end{equation}%
with $T$ given by (\ref{tt}), we obtain that for any $k\geq 0$,%
\begin{equation*}
\mathcal{E}_{\rho }(e^{-\psi _{k}}f_{t,k},e^{\psi
_{k}}f_{t,k}^{2p_{k}-1})\geq \frac{1}{4p_{k}}\mathcal{E}%
(f_{t,k}^{p_{k}})-K_{0}p_{k}^{2\beta +1}\int f_{t,k}^{2p_{k}}d\mu .
\end{equation*}%
From this, we derive that%
\begin{align}
\frac{d}{dt}\left\Vert f_{t,k}\right\Vert _{2p_{k}}^{2p_{k}}& =-2p_{k}%
\mathcal{E}_{\rho }(e^{-\psi _{k}}f_{t,k},e^{\psi _{k}}f_{t,k}^{2p_{k}-1}) 
\notag \\
& \leq -2p_{k}\left\{ \frac{1}{4p_{k}}\mathcal{E}%
(f_{t,k}^{p_{k}})-K_{0}p_{k}^{2\beta +1}\left\Vert f_{t,k}\right\Vert
_{2p_{k}}^{2p_{k}}\right\}  \notag \\
& =-\frac{1}{2}\mathcal{E}(f_{t,k}^{p_{k}})+2K_{0}p_{k}^{2\beta
+2}\left\Vert f_{t,k}\right\Vert _{2p_{k}}^{2p_{k}}.  \label{eq:st_k1}
\end{align}%
In particular, for $k=0$ ($p_{0}=1$), 
\begin{equation*}
\frac{d}{dt}\left\Vert f_{t,0}\right\Vert _{2}^{2}\leq 2K_{0}\left\Vert
f_{t,0}\right\Vert _{2}^{2},
\end{equation*}%
which gives that, using $\left\Vert f_{0,0}\right\Vert _{2}=\left\Vert
f\right\Vert _{2}=1$, 
\begin{equation}
\left\Vert f_{t,0}\right\Vert _{2}=\left\Vert f_{t,0}\right\Vert
_{p_{1}}\leq e^{K_{0}t}\left\Vert f\right\Vert _{2}=e^{K_{0}t}.
\label{eq:st_k2}
\end{equation}%
Since condition (\ref{eq:DUE}) implies the Nash inequality (cf. \cite[%
Theorem 2.1]{carlen1987upper}):%
\begin{equation*}
\left\Vert u\right\Vert _{2}^{2(1+\frac{\beta }{\alpha })}\leq C_{N}\left( 
\mathcal{E}(u)+R_{0}^{-\beta }\left\Vert u\right\Vert _{2}^{2}\right)
\left\Vert u\right\Vert _{1}^{2\beta /\alpha }
\end{equation*}%
for all $u\in \mathcal{F}\cap L^{1}$, we apply this inequality to function $%
f_{t,k}^{p_{k}}\in \mathcal{F}\cap L^{1}$ with $k\geq 1$%
\begin{equation*}
\mathcal{E}(f_{t,k}^{p_{k}})\geq \frac{1}{C_{N}}\left\Vert
f_{t,k}\right\Vert _{2p_{k}}^{2p_{k}(1+\frac{\beta }{\alpha })}\cdot
\left\Vert f_{t,k}\right\Vert _{p_{k}}^{-2p_{k}\beta /\alpha }-R_{0}^{-\beta
}\left\Vert f_{t,k}\right\Vert _{2p_{k}}^{2p_{k}}.
\end{equation*}%
Plugging this into (\ref{eq:st_k1}), we have%
\begin{eqnarray*}
2p_{k}\left\Vert f_{t,k}\right\Vert _{2p_{k}}^{2p_{k}-1}\frac{d}{dt}%
\left\Vert f_{t,k}\right\Vert _{2p_{k}} &=&\frac{d}{dt}\left\Vert
f_{t,k}\right\Vert _{2p_{k}}^{2p_{k}} \\
&\leq &-\frac{1}{2C_{N}}\left\Vert f_{t,k}\right\Vert _{2p_{k}}^{2p_{k}(1+%
\frac{\beta }{\alpha })}\cdot \left\Vert f_{t,k}\right\Vert
_{p_{k}}^{-2p_{k}\beta /\alpha }+\left( \frac{1}{2}R_{0}^{-\beta
}+2K_{0}p_{k}^{2\beta +2}\right) \left\Vert f_{t,k}\right\Vert
_{2p_{k}}^{2p_{k}},
\end{eqnarray*}%
which implies that%
\begin{equation}
\frac{d}{dt}\left\Vert f_{t,k}\right\Vert _{2p_{k}}\leq -\frac{1}{4C_{N}p_{k}%
}\left\Vert f_{t,k}\right\Vert _{2p_{k}}^{1+\frac{2p_{k}\beta }{\alpha }%
}\left\Vert f_{t,k}\right\Vert _{p_{k}}^{-\frac{2p_{k}\beta }{\alpha }%
}+b_{k}p_{k}^{2\beta +1}\left\Vert f_{t,k}\right\Vert _{2p_{k}}
\label{eq:st_k3}
\end{equation}%
for all $k\geq 1$, where%
\begin{equation}
b_{k}:=K_{0}+\left( 4R_{0}^{\beta }p_{k}^{2\beta +2}\right) ^{-1}\leq K_{0}+%
\frac{1}{4}R_{0}^{-\beta }\text{ for any }k\geq 0\text{.}  \label{bk}
\end{equation}

On the other hand, we claim that for any $k\geq 0$,%
\begin{equation}
\exp (-3/p_{k})f_{t,k+1}\leq f_{t,k}\leq \exp (3/p_{k})f_{t,k+1}.
\label{eq:compare}
\end{equation}%
Indeed, observe from (\ref{eq:Psi_choice}), (\ref{eq:perf_func}) and $%
p_{k+1}=2p_{k}$,%
\begin{eqnarray}
\left\Vert \psi _{k+1}-\psi _{k}\right\Vert _{\infty } &=&\lambda \left\Vert
\phi _{p_{k+1},\lambda }-\phi _{p_{k},\lambda }\right\Vert _{\infty }  \notag
\\
&\leq &\lambda \left\Vert \phi _{p_{k+1},\lambda }-\Phi \right\Vert _{\infty
}+\lambda \left\Vert \phi _{p_{k},\lambda }-\Phi \right\Vert _{\infty } 
\notag \\
&\leq &\lambda \left( \frac{1}{2\lambda p_{k}}+\frac{1}{\lambda p_{k}}%
\right) =\frac{3}{2p_{k}}.  \label{eq:psi_k_es}
\end{eqnarray}%
From this and using the Markovian property of $\left\{ Q_{t}\right\} _{t\geq
0}$, we have%
\begin{equation*}
f_{t,k}=e^{\psi _{k}}\left( Q_{t}\left( e^{-\psi _{k}}f\right) \right) \leq
e^{\psi _{k+1}+\frac{3}{2p_{k}}}\left( Q_{t}\left( e^{-\psi _{k+1}+\frac{3}{%
2p_{k}}}f\right) \right) =e^{3/p_{k}}f_{t,k+1}.
\end{equation*}%
Similarly,%
\begin{equation*}
f_{t,k+1}\leq e^{3/p_{k}}f_{t,k}.
\end{equation*}%
Thus, we obtain (\ref{eq:compare}), proving our claim.

Therefore, we conclude from (\ref{eq:st_k3}) that, using the fact that $%
f_{t,k}\leq e^{6/p_{k}}f_{t,k-1}$ by (\ref{eq:compare}), 
\begin{equation}
\frac{d}{dt}\left\Vert f_{t,k}\right\Vert _{2p_{k}}\leq -\frac{1}{%
C_{N}^{\prime }p_{k}}\left\Vert f_{t,k}\right\Vert _{2p_{k}}^{1+\frac{%
2p_{k}\beta }{\alpha }}\left\Vert f_{t,k-1}\right\Vert _{p_{k}}^{-\frac{%
2p_{k}\beta }{\alpha }}+b_{k}p_{k}^{2\beta +1}\left\Vert f_{t,k}\right\Vert
_{2p_{k}}  \label{eq:st_k4}
\end{equation}%
for all $k\geq 1$, where $C_{N}^{\prime }=4C_{N}\exp (12\beta /\alpha )$.

Define $u_{k}(t):=\left\Vert f_{t,k-1}\right\Vert _{p_{k}}$ and 
\begin{equation}
w_{k}(t):=\underset{s\in (0,t]}{\sup }\left\{ s^{\alpha (p_{k}-2)/(2\beta
p_{k})}u_{k}(s)\right\} .  \label{eq:w_Def}
\end{equation}%
Then by (\ref{eq:st_k2})%
\begin{equation}
w_{1}(t)=\underset{s\in (0,t]}{\sup }\left\{ u_{1}(s)\right\} =\underset{%
s\in (0,t]}{\sup }\left\{ \left\Vert f_{t,0}\right\Vert _{2}\right\} \leq
e^{K_{0}t}.  \label{eq:st0_1}
\end{equation}

On the other hand, we have from (\ref{eq:st_k4}) that, using $u_{k}^{-2\beta
p_{k}/\alpha }(t)\geq t^{p_{k}-2}w_{k}^{-2\beta p_{k}/\alpha }(t)$, 
\begin{eqnarray*}
u_{k+1}^{\prime }(t) &=&\frac{d}{dt}\left\Vert f_{t,k}\right\Vert
_{2p_{k}}\leq -\frac{1}{C_{N}^{\prime }p_{k}}u_{k+1}^{1+2\beta p_{k}/\alpha
}(t)u_{k}^{-2\beta p_{k}/\alpha }(t)+b_{k}p_{k}^{2\beta +1}u_{k+1}(t) \\
&\leq &-\frac{1}{C_{N}^{\prime }p_{k}}\cdot \frac{t^{p_{k}-2}}{w_{k}^{2\beta
p_{k}/\alpha }(t)}u_{k+1}^{1+2\beta p_{k}/\alpha }+b_{k}p_{k}^{2\beta
+1}u_{k+1}(t)
\end{eqnarray*}%
for $k\geq 1$. Then the condition (\ref{eq:Ab_lemma_cond}) is satisfied with 
$u(t)=u_{k+1}(t)$, $b=1/\left( C_{N}^{\prime }p_{k}\right) $, $p=p_{k}\geq 2$%
, $\theta =2\beta p_{k}/\alpha $, $w=w_{k}$ and $K=b_{k}p_{k}^{2\beta +1}$.
Thus, applying Lemma \ref{lem:F-S} with $\nu =2\beta +2$, we obtain%
\begin{equation*}
u_{k+1}(t)\leq \left( C_{N}^{\prime }\alpha p_{k}^{2\beta +2}/\beta \right)
^{\alpha /\left( 2\beta p_{k}\right) }t^{-\alpha \left( p_{k}-1\right)
/\left( 2\beta p_{k}\right) }e^{b_{k}p_{k}^{-1}t}w_{k}(t),
\end{equation*}%
that is,%
\begin{equation}
t^{\alpha \left( p_{k+1}-2\right) /\left( 2\beta p_{k+1}\right)
}u_{k+1}(t)\leq \left( C_{N}^{\prime }\alpha p_{k}^{2\beta +2}/\beta \right)
^{\alpha /\left( 2\beta p_{k}\right) }e^{b_{k}p_{k}^{-1}t}w_{k}(t),
\label{eq:w_k_k+1}
\end{equation}%
for all $t>0$. From this, we derive that%
\begin{equation*}
w_{k+1}(t)=\underset{s\in (0,t]}{\sup }\left\{ s^{\alpha \left(
p_{k+1}-2\right) /\left( 2\beta p_{k+1}\right) }u_{k+1}(s)\right\} \leq
\left( C_{N}^{\prime }\alpha p_{k}^{2\beta +2}/\beta \right) ^{\alpha
/\left( 2\beta p_{k}\right) }e^{b_{k}p_{k}^{-1}t}w_{k}(t),
\end{equation*}%
which gives that, using (\ref{bk}), 
\begin{eqnarray}
w_{k+1}(t)/w_{k}(t) &\leq &\left( C_{N}^{\prime }\alpha p_{k}^{2\beta
+2}/\beta \right) ^{\alpha /\left( 2\beta p_{k}\right) }e^{b_{k}tp_{k}^{-1}}
\notag \\
&=&\left( 2^{k\left( 2\beta +2\right) }\cdot C_{N}^{\prime }\alpha /\beta
\right) ^{\alpha /\left( \beta 2^{k+1}\right) }e^{b_{k}t2^{-k}}  \notag \\
&=&\left\{ \left( C_{N}^{\prime }\alpha /\beta \right) ^{\alpha /\left(
2\beta \right) }e^{b_{k}t}\cdot \left( 2^{\alpha (\beta +1)/\beta }\right)
^{k}\right\} ^{2^{-k}}\leq (Da^{k})^{2^{-k}},  \notag
\end{eqnarray}%
where $D:=\left( C_{N}^{\prime }\alpha /\beta \right) ^{\alpha /\left(
2\beta \right) }e^{(K_{0}+\frac{1}{4}R_{0}^{-\beta })t}$ and $a:=2^{\alpha
(\beta +1)/\beta }$. This implies by iteration and using (\ref{eq:st0_1})
that for any $k\geq 1$, 
\begin{eqnarray}
w_{k+1}(t) &\leq &(Da^{k})^{1/2^{k}}w_{k}(t)  \notag \\
&\leq &(Da^{k})^{1/2^{k}}\left\{ (Da^{k-1})^{1/2^{k-1}}w_{k-1}(t)\right\}
\leq \ldots  \notag \\
&\leq &D^{\frac{1}{2^{k}}+\frac{1}{2^{k-1}}+\cdots +\frac{1}{2}}a^{\frac{k}{%
2^{k}}+\frac{k-1}{2^{k-1}}+\cdots +\frac{1}{2}}w_{1}(t)  \notag \\
&\leq &Da^{2}w_{1}(t)\leq Da^{2}e^{K_{0}t}=C_{7}\exp (\,2K_{0}t+\frac{1}{4}%
R_{0}^{-\beta }t\,),  \label{eq:w_it}
\end{eqnarray}%
where $C_{7}=\left( C_{N}^{\prime }\alpha /\beta \right) ^{\alpha /\left(
2\beta \right) }2^{2\alpha (\beta +1)/\beta }$. Thus, we have from (\ref%
{eq:w_Def}), (\ref{eq:w_it}) that for any $k\geq 1$, $t>0$%
\begin{equation}
t^{\alpha (p_{k+1}-2)/(2\beta p_{k+1})}\left\Vert Q_{t}^{\psi
_{k}}f\right\Vert _{2p_{k}}\leq w_{k+1}(t)\leq C_{7}\exp (\,2K_{0}t+\frac{1}{%
4}R_{0}^{-\beta }t\,)  \label{eq:it_r1}
\end{equation}%
for any $0\leq f\in \mathcal{F}\cap L^{\infty }$ with $\left\Vert
f\right\Vert _{2}=1$.

Since $\psi _{k}$ is a Cauchy sequence in $L^{\infty }$ by (\ref{eq:psi_k_es}%
), the sequence $\left\{ \psi _{k}\right\} $ converges uniformly to $\psi
_{\infty }$ as $k\rightarrow \infty $ with 
\begin{equation*}
\psi _{\infty }:=\lambda \psi \in L^{\infty }
\end{equation*}%
by using (\ref{eq:perf_func}), where $\psi (y)=\left( \frac{2r-d(x_{0},y)}{r}%
\right) _{+}\wedge 1$ for $y\in M$. Clearly,%
\begin{equation}
\psi _{\infty }(y)-\psi _{\infty }(x)=-\lambda  \label{eq:Psip}
\end{equation}%
for any $x\in B(x_{0},r)$ and any $y\in M\setminus B(x_{0},2r)$. Set 
\begin{equation*}
f_{t,\infty }:=e^{\psi _{\infty }}\left( Q_{t}\left( e^{-\psi _{\infty
}}f\right) \right) .
\end{equation*}%
The sequence $\left\{ f_{t,k}\right\} _{k\geq 1}$ converges uniformly to $%
f_{t,\infty }$ as $k\rightarrow \infty $, and thus%
\begin{equation*}
\left\Vert Q_{t}^{\psi _{k}}f\right\Vert _{p_{k}}=\left\Vert
f_{t,k}\right\Vert _{p_{k}}\rightarrow \left\Vert Q_{t}^{\psi _{\infty
}}f\right\Vert _{\infty }.
\end{equation*}%
Therefore, letting $k\rightarrow \infty $ in (\ref{eq:it_r1}), we obtain
that 
\begin{equation*}
\left\Vert Q_{t}^{\psi _{\infty }}f\right\Vert _{\infty }\leq \frac{C_{7}}{%
t^{\alpha /\left( 2\beta \right) }}\exp (\,2K_{0}t+\frac{1}{4}R_{0}^{-\beta
}t\,).
\end{equation*}%
for any $0\leq f\in \mathcal{F}\cap L^{\infty }$ with $\left\Vert
f\right\Vert _{2}=1$, that is,%
\begin{equation*}
\left\Vert Q_{t}^{\psi _{\infty }}\right\Vert _{2\rightarrow \infty
}:=\sup_{\left\Vert f\right\Vert _{2}=1}\left\Vert Q_{t}^{\psi _{\infty
}}f\right\Vert _{\infty }\leq \frac{C_{7}}{t^{\alpha /\left( 2\beta \right) }%
}\exp (\,2K_{0}t+\frac{1}{4}R_{0}^{-\beta }t\,).
\end{equation*}%
This inequality is also true for $-\psi _{\infty }$ by Remark \ref{R11} and
repeating the above procedure. Since $Q_{t}^{-\psi _{\infty }}$ is the
adjoint of operator $Q_{t}^{\psi _{\infty }}$, we see that 
\begin{equation*}
\left\Vert Q_{t}^{\psi _{\infty }}\right\Vert _{1\rightarrow
2}:=\sup_{\left\Vert f\right\Vert _{1}=1}\left\Vert Q_{t}^{\psi _{\infty
}}f\right\Vert _{2}=\left\Vert Q_{t}^{-\psi _{\infty }}\right\Vert
_{2\rightarrow \infty }\leq \frac{C_{7}}{t^{\alpha /\left( 2\beta \right) }}%
\exp (\,2K_{0}t+\frac{1}{4}R_{0}^{-\beta }t\,),
\end{equation*}%
and thus, 
\begin{equation*}
\left\Vert Q_{t}^{\psi _{\infty }}\right\Vert _{1\rightarrow \infty }\leq
\left\Vert Q_{t/2}^{\psi _{\infty }}\right\Vert _{1\rightarrow 2}\left\Vert
Q_{t/2}^{\psi _{\infty }}\right\Vert _{2\rightarrow \infty }\leq \frac{C_{8}%
}{t^{\alpha /\beta }}\exp (\,2K_{0}t+\frac{1}{4}R_{0}^{-\beta }t\,)
\end{equation*}%
where $C_{8}=2^{\alpha /\beta }\left( C_{7}\right) ^{2}$. From this and
using (\ref{eq:Psip}), (\ref{eq:K_def}), 
\begin{align}
q_{t}^{(\rho )}(x,y)& \leq \frac{C_{8}}{t^{\alpha /\beta }}\exp \left(
2K_{0}t+\frac{1}{4}R_{0}^{-\beta }t+\psi _{\infty }(y)-\psi _{\infty
}(x)\right)  \notag \\
& =\frac{C_{8}}{t^{\alpha /\beta }}\exp (\,\frac{1}{4}R_{0}^{-\beta
}t\,)\exp \left( 2C_{0}\lambda ^{2\beta +2}Tt-\lambda \right)  \label{44}
\end{align}%
for all $t>0$, $r\in (0,R_{0}/2)$, $\mu $-almost all $x\in B(x_{0},r),y\in
M\setminus B(x_{0},2r)$ and for all $\lambda \geq \eta ^{-1}$ and $0<\eta <1$%
, where $\rho =\eta r$.

We distinguish two cases depending on $J\neq 0$ or $J\equiv 0$.

\emph{Case }$J\neq 0$\emph{.} By (\ref{tt}), (\ref{44}) and using $\rho
=\eta r$, we have%
\begin{eqnarray}
q_{t}^{(\rho )}(x,y) &\leq &\frac{C_{8}}{t^{\alpha /\beta }}\exp (\,\frac{1}{%
4}R_{0}^{-\beta }t\,)\exp \left( 2C_{0}\lambda ^{2\beta +2}Tt-\lambda \right)
\notag \\
&=&\frac{C_{8}}{t^{\alpha /\beta }}\exp (\,\frac{1}{4}R_{0}^{-\beta
}t\,)\exp \left( 2C_{0}\lambda ^{2\beta +2}e^{c_{1}(\eta )\lambda }\frac{t}{%
\rho ^{\beta }}-\lambda \right)  \notag \\
&\leq &\frac{C_{8}}{t^{\alpha /\beta }}\exp (\,\frac{1}{4}R_{0}^{-\beta
}t\,)\exp \left( C_{9}(\eta )e^{2c_{1}(\eta )\lambda }\frac{t}{r^{\beta }}%
-\lambda \right)  \label{eq:q_est}
\end{eqnarray}%
where $c_{1}(\eta )=2(\beta +1)\left( \eta +2\eta ^{2}\right) $\ by (\ref%
{eq:c1_Def}), and $C_{9}(\eta )$ is given by%
\begin{equation*}
C_{9}(\eta )=2C_{0}\eta ^{-\beta }\left\{ 2\left( \beta +1\right)
/c_{1}(\eta )\right\} ^{2\beta +2}=2C_{0}\eta ^{-\beta }\left( \eta +2\eta
^{2}\right) ^{-2(\beta +1)},
\end{equation*}%
where in the last inequality we have used the following:%
\begin{equation*}
\lambda ^{2\beta +2}\leq \left\{ 2\left( \beta +1\right) /c_{1}(\eta
)\right\} ^{2\beta +2}e^{c_{1}(\eta )\lambda }
\end{equation*}%
by the elementary inequality $a\leq e^{a}$ for any $a\geq 0$, with $a=\frac{%
c_{1}(\eta )}{2(\beta +1)}\lambda =\left( \eta +2\eta ^{2}\right) \lambda $.

We first choose $\lambda $ and then choose $\eta $. Choose $\lambda $ such
that $e^{-\lambda }=\left( \frac{r}{t^{1/\beta }}\right) ^{-(\alpha +\beta
)} $, that is,%
\begin{equation}
\qquad \lambda =\frac{\alpha +\beta }{\beta }\log \left( r^{\beta }/t\right)
,  \label{eq:lam_choice}
\end{equation}%
but we need to ensure the condition $\lambda \geq \eta ^{-1}$ is satisfied,
namely%
\begin{equation}
\log \left( r^{\beta }/t\right) \geq \frac{\beta }{\alpha +\beta }\eta ^{-1}.
\label{13}
\end{equation}%
With such choice of $\lambda $, we then choose $\eta \in (0,1)$ such that 
\begin{equation}
e^{2c_{1}(\eta )\lambda }\frac{t}{r^{\beta }}=1,  \label{14}
\end{equation}%
that is, 
\begin{equation*}
4(\beta +1)\left( \eta +2\eta ^{2}\right) =2c_{1}(\eta )=\frac{\beta }{%
\alpha +\beta }.
\end{equation*}%
(Clearly this can be achieved. Actually we have $\eta +2\eta ^{2}\leq \frac{1%
}{4}$, implying $0<\eta <\frac{\sqrt{3}-1}{2}$). Once $\eta $ is chosen by (%
\ref{14}), then the condition (\ref{13}) is satisfied if 
\begin{equation}
r^{\beta }/t\geq c_{2}  \label{15}
\end{equation}%
for some universal constant $c_{2}>0$.

Therefore, we conclude from (\ref{eq:q_est}), (\ref{14}), (\ref%
{eq:lam_choice}) that 
\begin{equation}
q_{t}^{(\rho )}(x,y)\leq \frac{C_{8}}{t^{\alpha /\beta }}\exp (\,\frac{1}{4}%
R_{0}^{-\beta }t\,)\exp \left( C_{9}(\eta )\right) \cdot e^{-\lambda
}=C_{10}\exp (\,\frac{1}{4}R_{0}^{-\beta }t\,)\frac{t}{r^{\alpha +\beta }}
\label{qE0}
\end{equation}%
for all $t>0$, $r\in (0,R_{0}/2)$ with $r^{\beta }\geq c_{2}t$ and all $\rho
=\eta r$, for $\mu $-almost all $x\in B(x_{0},r),y\in B(x_{0},2r)^{c}$,
where $C_{10}$ is a universal constant independent of $x_{0},t,r,x,y$ and $%
R_{0}$.

Note that 
\begin{equation*}
p_{t}(x,y)\leq q_{t}^{(\rho )}(x,y)+2t\underset{{x\in M,\;y\in B(x,\rho )^{c}%
}}{\text{sup}}J(x,y),
\end{equation*}%
see \cite[Lemma 3.1 (c)]{bagrku2009hkub}, or \cite[(4.13) p.6412]%
{grhulau2014nonlocal}. It follows that, using $\rho =\eta r,$ 
\begin{equation*}
p_{t}(x,y)\leq C_{10}\exp (\,\frac{1}{4}R_{0}^{-\beta }t\,)\frac{t}{%
r^{\alpha +\beta }}+C\frac{2t}{\rho ^{\alpha +\beta }}\leq C_{11}\exp (\,%
\frac{1}{4}R_{0}^{-\beta }t\,)\frac{t}{r^{\alpha +\beta }}
\end{equation*}%
for all $t>0$, $r\in (0,R_{0}/2)$ with $r^{\beta }\geq c_{2}t$, for $\mu $%
-almost all $x\in B(x_{0},r),y\in B(x_{0},2r)^{c}$, where $C_{11}$ is a
universal constant independent of $x_{0},t,r,x,y$ and $R_{0}$.

With a certain amount of effort, we can say that%
\begin{equation}
p_{t}(x,y)\leq C_{12}\exp (\,\frac{1}{4}R_{0}^{-\beta }t\,)\frac{t}{%
d(x,y)^{\alpha +\beta }}  \label{Ex}
\end{equation}%
for all $t>0$ and $\mu $-almost all $x,y\in M$, if $d(x,y)\geq
c_{3}t^{1/\beta }$, for some universal constants $C_{12}>0$ and $c_{3}>0$,
both of which are independent of $R_{0}$, thus showing that (\ref{eq:UE}) is
true.

Finally, if $d(x,y)<c_{3}t^{1/\beta }$ then (\ref{eq:UE}) follows directly
from (\ref{eq:DUE}).

\emph{Case }$J\equiv 0$\emph{.} By (\ref{tt}), (\ref{44}) and setting $\rho =%
\frac{1}{2}r$ with $\eta =\frac{1}{2}$, we have 
\begin{align*}
p_{t}(x,y)& =q_{t}^{(\rho )}(x,y)\leq \frac{C_{8}}{t^{\alpha /\beta }}\exp
(\,\frac{1}{4}R_{0}^{-\beta }t\,)\exp \left( 2C_{0}\lambda ^{2\beta
+2}Tt-\lambda \right) \\
& =\frac{C_{8}}{t^{\alpha /\beta }}\exp (\,\frac{1}{4}R_{0}^{-\beta
}t\,)\exp \left( 2C_{0}\lambda ^{2\beta +2}\frac{t}{r^{\beta }}-\lambda
\right)
\end{align*}%
for all $t>0,r\in \left( 0,R_{0}/2\right) $ and $\mu $-almost all $x\in
B(x_{0},r),y\in B(x_{0},2r)^{c}$, for all $\lambda \geq \eta ^{-1}=2$.
Choosing $\lambda $ such that%
\begin{equation*}
2C_{0}\lambda ^{2\beta +2}\frac{t}{r^{\beta }}=\frac{\lambda }{2},
\end{equation*}%
that is, $\lambda =\left( \frac{1}{4C_{0}}\frac{r^{\beta }}{t}\right)
^{1/(2\beta +1)}$. But we need ensure that $\lambda \geq \eta ^{-1}=2$; this
can be achieved if $r^{\beta }\geq c_{4}t$ for some universal constant $%
c_{4}>0$. Therefore, we obtain 
\begin{equation*}
p_{t}(x,y)\leq \frac{C_{8}}{t^{\alpha /\beta }}\exp (\,\frac{1}{4}%
R_{0}^{-\beta }t\,)\exp \left( -c\left( \frac{r^{\beta }}{t}\right)
^{1/(2\beta +1)}\right)
\end{equation*}%
for all $t>0$, $r\in (0,R_{0}/2)$ with $r^{\beta }\geq c_{4}t$ and $\mu $%
-almost all $x\in B(x_{0},r),y\in B(x_{0},2r)^{c}$, where $C_{8},c$ are
independent of $R_{0}$.

Therefore, we conclude that%
\begin{equation}
p_{t}(x,y)\leq \frac{C}{t^{\alpha /\beta }}\exp (\,\frac{1}{4}R_{0}^{-\beta
}t\,)\exp \left( -c\left( \frac{d(x,y)}{t^{1/\beta }}\right) ^{\frac{\beta }{%
\beta ^{\prime }-1}}\right)  \label{C12}
\end{equation}%
for all $t>0$ and $\mu $-almost all $x,y\in M$, where 
\begin{equation*}
\beta ^{\prime }:=2\beta +2.
\end{equation*}

Finally, we obtain \ref{eq:UE_loc} by applying Lemma \ref{UEiter}. The proof
is complete.
\end{proof}

We finish this section by proving Theorem \ref{T2}.

\begin{proof}[Proof of Theorem \protect\ref{T2}]
Indeed, by Theorem \ref{thm:Mr2}, it suffices to show the following opposite
implications 
\begin{eqnarray}
(\ref{eq:UE}) &\Rightarrow &(\ref{eq:DUE})+(\ref{eq:CSA})+\ref{eq:J_less},
\label{33} \\
\ref{eq:UE_loc} &\Rightarrow &(\ref{eq:DUE})+(\ref{eq:CSA})+(J\equiv 0).
\label{37}
\end{eqnarray}%
Indeed, it is trivial to see that (\ref{eq:DUE}) follows either from (\ref%
{eq:UE}) or from \ref{eq:UE_loc}, whilst the implication%
\begin{equation*}
(\ref{eq:UE})\Rightarrow \ref{eq:J_less}
\end{equation*}%
was proved in \cite[p.150]{bagrku2009hkub}. On the other hand, the
implication 
\begin{equation*}
\ref{eq:UE_loc}\Rightarrow (J\equiv 0)
\end{equation*}%
follows by using the fact that%
\begin{equation*}
J(x,y)=\lim_{t\rightarrow 0}\frac{1}{2t}p_{t}(x,y)\text{ for }\mu \text{%
-a.a. }(x,y)\in M\times M\setminus \text{diag}.
\end{equation*}%
(alternatively $J\equiv 0$ follows from \cite[Theorem 3.4]{grku2008dic} no
matter $R_{0}<\infty $ or $R_{0}=\infty $).

Therefore, the implications (\ref{33}), (\ref{37}) will follow if we can show%
\begin{eqnarray}
(\ref{eq:UE}) &\Rightarrow &(\ref{eq:S}),  \label{41} \\
\ref{eq:UE_loc} &\Rightarrow &(\ref{eq:S}),  \label{42}
\end{eqnarray}%
since we have already proved 
\begin{equation*}
(\ref{eq:S})\Rightarrow (\ref{eq:CSA})
\end{equation*}%
in Lemma \ref{lem:S_to_CSA} in Section \ref{sec:CSA}.

To prove (\ref{41}), (\ref{42}), let $B:=B(x_{0},r)$ for $x_{0}\in M$ and $%
r\in (0,R_{0}).$ Note that if the heat kernel $p_{t}(x,y)$ satisfies 
\begin{equation}
p_{t}(x,y)\leq \frac{C}{t^{\alpha /\beta }}\exp \left( R_{0}^{-\beta
}t\right) \Phi _{2}\left( \frac{d(x,y)}{t^{1/\beta }}\right)  \label{38}
\end{equation}%
for all $t>0$ and $\mu $-almost all $x,y\in M$, where $\Phi _{2}$ is some
non-increasing function on $[0,\infty )$, then using condition ($V_{\leq }$%
), 
\begin{equation}
P_{t}1_{B^{c}}(x)\leq C\int_{\frac{1}{8}rt^{-1/\beta }}^{\infty }s^{\alpha
-1}\Phi _{2}(s)ds  \label{40}
\end{equation}%
for all $x\in \frac{1}{2}B$, for some constant $C$ independent of $%
x_{0},r,R_{0}$ (see \cite[formula (3.7)]{grigor2003heat}). Assume further
that%
\begin{equation}
\int_{0}^{\infty }s^{\alpha -1}\Phi _{2}(s)ds<\infty .  \label{39}
\end{equation}%
Then by (\ref{40}), (\ref{39}),%
\begin{equation*}
P_{t}1_{B^{c}}\leq \frac{1}{2}\text{ in }\frac{1}{2}B
\end{equation*}%
if $rt^{-1/\beta }\gg 1$. From this and using the conservativeness of $(%
\mathcal{E},\mathcal{F})$, we obtain condition ($S$) (see \cite[Theorem 5.8
p.544, and Remark 5.9 p.547]{grigor2014upper}). Since the assumptions (\ref%
{38}) and (\ref{39}) are satisfied either by condition (\ref{eq:UE}) where%
\begin{equation*}
\Phi _{2}(s)=(1+s)^{-(\alpha +\beta )}
\end{equation*}%
or by \ref{eq:UE_loc} where%
\begin{equation*}
\Phi _{2}(s)=\exp \left( -cs^{\beta /(\beta -1)}\right)
\end{equation*}%
for all $s\geq 0$, we conclude that (\ref{41}), (\ref{42}) hold. The proof
is complete.
\end{proof}

\bibliographystyle{siam}
\bibliography{Ref_UHK}

\end{document}